\definecolor{red}{rgb}{0.75,0,0}
\definecolor{green}{rgb}{0,0.5,0}
\definecolor{blue}{rgb}{0,0,0.75}
\DeclareFontFamily{OT1}{pzc}{}
\DeclareFontShape{OT1}{pzc}{m}{it}{<-> s * [1.10] pzcmi7t}{}
\DeclareMathAlphabet{\mathpzc}{OT1}{pzc}{m}{it}
\newtheorem{theorem}{Theorem}[section]
\newtheorem{prop}[theorem]{Proposition}
\newtheorem{lemma}[theorem]{Lemma}
\newtheorem{defn}[theorem]{Definition}
\numberwithin{equation}{section}
\def\dys{\displaystyle}
\def\vs{\vspace{1mm}}
\def\dd{\snr{\,\cdot\,}_{\h}}
\def \eps{{\varepsilon}}
\def \r{{\mathbb{R}}}
\def \h{{\mathbb{H}^n}}
\def \N{{\mathbb{N}}}
\def\gamp{\Gamma^{+}}
\def\ue{u_{\eps}}
\def\mue{\mu_{\eps}}
\def\mea{\mathcal{M}(\overline{\Omega})}
\def\XXint#1#2#3{{\setbox0=\hbox{$#1{#2#3}{\int}$}
		\vcenter{\hbox{$#2#3$}}\kern-.5\wd0}}
\newcommand{\mbu}{\bar{\mu}_{\eps}}
\newcommand{\El}{\mathcal{E}_\lambda}
\newcommand{\Es}{\mathcal{E}^*}
\newcommand{\Fep}{\mathcal{F}_{\eps}}
\newcommand{\Fc}{\mathcal{F}}
\newcommand{\Sc}{{S}^1_0}
\newcommand{\Xc}{{X}}
\newcommand{\Ssub}{\Sob_\eps}
\newcommand{\Som}{\Sob_\Om}
\newcommand{\Sob}{S^\ast}
\newcommand{\uz}{u^{(0)}}
\newcommand{\lambdakj}{{\lambda_k^{(j)}}}
\newcommand{\xikj}{{\xi_k^{(j)}}}
\newcommand{\ukj}{{u_k^{(j)}}}
\newcommand{\rkj}{{r_k^{(j)}}}
\newcommand{\uj}{{u^{(j)}}}
\newcommand{\rr}{\rho}
\newcommand{\snr}[1]{\lvert #1\rvert}
\newcommand{\tows}{\stackrel{\ast}{\rightharpoonup}}
\newcommand{\tow}{\rightharpoonup}
\newcommand{\towt}{\stackrel{\mathpzc{t}}{\rightarrow}}
\newcommand{\Om}{\Omega}
\newcommand{\Omb}{\overline{\Omega}}
\newcommand{\Irm}{\text{I}}
\renewcommand{\rho}{\varrho}
\title[Critical Sobolev embedding in the Heisenberg group]{Struwe's Global Compactness \\ and energy approximation of the critical Sobolev embedding in the Heisenberg group}
\author[G. Palatucci]{Giampiero Palatucci}  \address{Giampiero Palatucci\\Dipartimento di Scienze Matematiche, Fisiche e Informatiche, Universit\`a di Parma\\ Parco Area delle Scienze 53/a, Campus, 43124 Parma, Italy} \email{\url{giampiero.palatucci@unipr.it}}
\author[M. Piccinini]{Mirco Piccinini}  \address{Mirco Piccinini\\Dipartimento di Scienze Matematiche, Fisiche e Informatiche, Universit\`a di Parma\\ Parco Area delle Scienze 53/a, Campus, 43124 Parma, Italy; \and Dipartimento di Matematica, Universit\`a di Pisa \\ L.go B. Pontecorvo, 5, 56127, Pisa, Italy}
\email{\url{mirco.piccinini@dm.unipi.it}}
\author[L. Temperini]{Letizia Temperini}  \address{{\color{black}Letizia Temperini\\Dipartimento di Ingegneria Industriale e Scienze Matematiche, Universit\`a Politecnica delle Marche\\ Via Brecce Bianche, 12, 60131 Ancona, Italy}} \email{\url{l.temperini@staff.univpm.it}}
\begin{document}
	
\makeatletter
\@namedef{subjclassname@2020}{\textup{2020} Mathematics Subject Classification}
\makeatother
\subjclass[2020]{35R03, 46E35, 35J08, 35A15\vspace{0.9mm}}
	
\keywords{Sobolev embeddings,  Heisenberg group, Profile decompositions, CR Yamabe, Global compactness\vspace{0.9mm}}

	\thanks{{\it Aknowledgements.}
     The authors are also supported by INdAM projects ``Fenomeni non locali in problemi locali", CUP\_E55F22000270001.  The first two authors are supported by ``Problemi non locali: teoria cinetica e non uniforme ellitticit\`a'', CUP\_E53C220019320001 and ``Problemi ellittici e sub-ellittici: singolarit\`a e crescita critica'', CUP\_E53C23001670001. The second author is also supported by the Project ``Local vs Nonlocal: mixed type operators and nonuniform ellipticity", CUP\_D91B21005370003.
 \\    The results in this paper have been announced in the preliminary research report~\cite{PPT23}}

	\maketitle

	\begin{abstract}
		 We investigate some of the effects of the lack of compactness in the critical Folland-Stein-Sobolev embedding in very general (possible non-smooth) domains,
       by proving via De Giorgi's $\Gamma$-convergence techniques that optimal functions for a natural subcritical approximations of the Sobolev quotient concentrate energy at one point. In the second part of the paper, we try to restore the compactness by extending the celebrated Global Compactness result to the Heisenberg group via a completely different approach with respect to the original one by Struwe~\cite{Str84}.
\end{abstract}

	%
	%

\setcounter{equation}{0}\setcounter{theorem}{0}

\maketitle
\vspace{5mm}

    \section{Introduction}

The aim of this paper is to investigate some of the effects of the lack of compactness in the critical Folland-Stein-Sobolev embedding in general domains.
Let  $\h := (\mathbb{R}^{2n}\times\mathbb{R},\circ, \delta_\lambda)$ denote the  Heisenberg-Weyl group, and
consider the standard Folland-Stein-Sobolev space~$\Sc(\h)$ defined as the completion of~$C^\infty_0(\h)$ with respect to the homogeneous subgradient norm~$\|D_H\cdot\|_{L^2(\h)}$.  As well known, the following Sobolev-type inequality, first proved in~\cite{FS74}, asserts that for a positive constant~$\Sob$ it holds
\begin{equation}\label{folland}
\|u\|^{2^\ast}_{L^{2^\ast}(\h)} \leq \Sob \|D_Hu\|^{2^\ast}_{L^2(\h)} \quad \forall u \in S^1_0(\h)\,,
\end{equation}
where we denote by $2^\ast=2^\ast(Q):=2Q/(Q-2)$ the Folland-Stein-Sobolev critical exponent which depends on the {\it homogeneous dimension}~$Q:=2n+2$ of the Heisenberg group~$\h$.
\vspace{2mm}

Over the past decades, the critical Sobolev inequality~\eqref{folland} has garnered significant attention as a captivating subject of study. Given the extent of the literature, providing an exhaustive treatment here is not feasible. We just mention the highly significant papers~\cite{JL88,GL92,LU98,GV00,CU01}, the recent book~\cite{IV11}, and the references therein for in-depth insights.
The main reasons for studying the critical Sobolev inequality lie in its close association with the lack of compactness of the related critical Folland-Stein-Sobolev embedding and its connection to the corresponding Euler-Lagrange equation. This, in turn, addresses the significant CR Yamabe problem. 
Numerous results have been established, indicating compliance with the classical critical inequality in the Euclidean framework, despite the complexities posed by the sub-Riemannian geometry of the Heisenberg setting; {see for instance  the aforementioned list of papers above as well as the very recent ones~\cite{CLMR23,FV23,PV24}, where the solutions to the critical Yamabe equation in $\h$ are classified, and the references therein}. However, many expected results remain open due to substantial differences from the Euclidean framework, stemming from the complex metric structure of the Heisenberg setting and the presence of characteristic points in the domains involved.

\vspace{2mm}

The contribution of the present paper is twofold.
Firstly, we investigate a natural approximation to~\eqref{folland} via subcritical Sobolev inequalities. Our approach is variational. For this, it is convenient to consider the following maximization problem,
\begin{equation}\label{critica0}
\Sob:=\sup\left\{\int_{\h}|u(\xi)|^{2^\ast}\,{\rm d}\xi \, : \, u\in \Sc(\h), \int_{\h}|D_H u(\xi)|^2{\rm d}\xi \leq 1\right\}.
\end{equation}

Clearly, the validity of~\eqref{folland} is equivalent to the finiteness of the constant~$\Sob$, defined in~\eqref{critica0}.
The existence of maximizers in~\eqref{critica0} poses a challenging problem due to the intrinsic dilations and translations invariance of this inequality, analogous to the difficulties encountered in the classical critical Sobolev inequality. However, the situation becomes even more delicate in this context, given the underlying non-Euclidean geometry of the Heisenberg group and the obstacles arising from non-commutativity. The groundbreaking paper by Jerison and Lee~\cite{JL88} has provided explicit forms of the maximizers, along with the computation of the optimal constant in~\eqref{critica0}. We also refer to the fundamental paper~\cite{FL12} where sharp constants for inequalities on the Heisenberg group have been derived for even more general cases, in turn obtaining sharp constants for the corresponding duals, which are the Sobolev inequalities for the sub-Laplacian and the conformal fractional Laplacians.  
Additionally, related investigations on the Moser-Trudinger inequality in the sub-Riemannian setting can be found in~\cite{LL12,BFM13}, and also in~\cite{FL12} via a limiting process and duality (the logarithmic Hardy-Littlewood-Sobolev inequality).

\vspace{2mm}

Similarly, for any bounded domain~$\Omega\subset\h$, we consider the following Sobolev embedding in the same variational form as the one in~\eqref{critica0},
\begin{equation}\label{critica}
\Som:=\sup\left\{\int_{\Omega}|u(\xi)|^{2^\ast}\,{\rm d}\xi \, : \, u\in \Sc(\Omega), \int_{\Omega}|D_H u(\xi)|^2{\rm d}\xi \leq 1\right\},
\end{equation}
where now the Folland-Stein-Sobolev space $\Sc(\Omega)$ is given as the closure of $C^\infty_0(\Omega)$ with respect to the homogeneous $L^2$-subgradient norm in $\Omega$. It can be verified that $\Som\equiv \Sob$ using a standard scaling argument on compactly supported smooth functions. However, in contrast to the critical case presented in \eqref{critica0}, the variational problem \eqref{critica} has no maximizers, as evidenced by the explicit form of the optimal functions in \eqref{critica0} (see, for instance, Theorem \ref{thm_optimal}). The scenario changes drastically for the subcritical embeddings. In this case, due to the boundedness of $\Omega$, the embedding $\Sc(\Omega) \hookrightarrow L^{2^\ast-\eps}(\Omega)$ becomes compact, given that $\eps<2^\ast-2$, ensuring the existence of a maximizer $u_\eps\in \Sc(\Omega)$ for the associated variational problem
   \begin{equation}\label{sobolev}
\Sob_\eps:=\sup\left\{\int_{\Omega}|u(\xi)|^{2^\ast-\eps}\,{\rm d}\xi \, : \, u\in \Sc(\Omega), \int_{\Omega}|D_H u(\xi)|^2{\rm d}\xi \leq 1\right\}.
\end{equation}

In the same fashion, in the Euler-Lagrange equation for the energy functionals in~\eqref{sobolev}, that is
\begin{equation}\label{equazione}
-\Delta_H u_\eps = \lambda |u_\eps|^{2^\ast-\eps-2} u_\eps \, \ \text{in} \ (\Sc(\Omega))',
\end{equation}
where $\lambda$ is a suitable Lagrange multiplier, we spot a different behaviour for $\eps>0$  and $\eps=0$ . Whereas when $\eps>0$ the problem above has a solution~$u_\eps$, it becomes very delicate when $\eps=0$: even the existence of solutions is not granted. 
{ In particular, the existence and various properties of the solutions strongly depend on both the geometry and the topology of the domain~$\Omega$. We refer to the following papers: 
\cite{LU98} for nonexistence of nonnegative solutions when~$\Om$ is a particular half-space of $\h$ with boundary parallel to the center $\{(0,0,t)\,|\, t\in\mathbb{R}\}$ of $\h$ ; \cite{U99} for the extension to general half-spaces transverse to the center of $\h$;  \cite{GL92} for existence and nonexistence results for even more general nonlinearity;} \cite{CU01}~where the authors show the existence of a solution in the case when the domain~$\Omega$ has at least a nontrivial suitable homology group.

\vspace{2mm}

In view of such a qualitative change when $\eps=0$ in both~\eqref{sobolev} and \eqref{equazione}, in the present paper we analyze the asymptotic behaviour as $\eps$ goes to $0$ of both
the subcritical Sobolev constant~$\Sob_\eps$ in the Heisenberg group given in~\eqref{sobolev} and of the corresponding optimal functions~$u_\eps$ of the embedding $\Sc(\Omega) \hookrightarrow L^{2^\ast-\eps}(\Omega)$.

\vspace{2mm}
Regarding {\it the~Euclidean~counterpart} of this investigation, several results have been obtained, mostly via fine estimates and a standard  elliptic regularity approach. These investigations primarily focus on a special class of solutions of equation~\eqref{equazione}, which serve as maximizers for the corresponding Sobolev embedding. Energy concentration results for these sequences, along with the subsequent localization of such concentration on specific points, have been established in~\cite{BP89,Rey89,Han91}, among other works.
\\
However, the situation is quite different in {\it the~Heisenberg~panorama} due to the numerous inherent challenges in this framework. Indeed, the sub-Riemannian geometry prevents a straightforward generalization of several tools and symmetrization techniques, as well as regularity approximations. As a result, the Heisenberg setting remains relatively unexplored in comparison, leaving the field with significant research opportunities. It is worth mentioning a first important result in the recent paper~\cite{MMP13}, where the authors are able to construct a concentrating sequence of solutions to~\eqref{equazione} for certain non-degenerate critical point of  the regular part of the Green function of~$\Om$.

\vspace{2mm}

 Our first main result is the subcritical approximation of the Sobolev quotient~$S^\ast$ in the Heisenberg group  described
  below.
    \begin{theorem}\label{the_gamma-intro}
Let $\Omega\subseteq\h$ be a bounded domain, and denote by~$\mea$ the set of nonnegative Radon
measures in~$\Omega$. Let $\Xc=\Xc(\Omega)$ be the space
\[
\Xc:=\Big\{(u,\mu) \in \Sc(\Omega)\times\mea: \mu \geq |D_H u|^{2}{\rm d}\xi, \, \mu(\Omb)\leq 1\Big\},
\]
endowed with the product  topology~$\mathpzc{t}$ such that
\begin{equation}\label{def_top1}
(u_k,\mu_{k}) \towt (u,\mu) \ \, {\stackrel{\text{def}}{\Leftrightarrow}} \ \, \begin{cases} u_{k} \rightharpoonup u \ \text{in} \ L^{2^{\ast}}\!(\Omega), \\
\mu_{k} \tows \mu \ \text{in} \ \mea.
\end{cases}
\end{equation}
\noindent
\\ Let us consider the following family of functionals,
\begin{equation}\label{def_fue1}
\Fep(u,\mu):= \dys \int_{\Omega}|u|^{2^{\ast}\!-\eps} {\rm d}\xi  \ \ \ \forall (u,\mu) \in \Xc\,.
\end{equation}
Then, as $\eps\to 0$, the $\Gamma^{+}$-limit of the family of functionals $\Fep$ with respect to the topology~${\mathpzc{t}}$ given by~{\rm(\ref{def_top1})} is the functional $\Fc$ defined by
\begin{equation}\label{def_fu}
\Fc(u,\mu)=\int_{\Omega}|u|^{2^{\ast}}{\rm d}\xi + S^{\ast}\sum_{j=1}^{\infty}\mu_{j}^{\frac{2^{\ast}}{2}} \ \ \ \forall (u,\mu) \in \Xc.
\end{equation}
Here $S^{\ast}$ is the best Sobolev constant in $\h$, $2^\ast=2Q/(Q-2)$ is the Folland-Stein-Sobolev critical exponent,
and the numbers $\mu_j$ are the coefficients of the atomic part of the measure $\mu$.
\end{theorem}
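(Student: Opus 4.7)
The plan is to prove the two defining inequalities of $\Gamma^{+}$-convergence with respect to the topology~$\mathpzc{t}$: the upper bound $\limsup_{\eps\to 0}\Fep(u_\eps,\mu_\eps)\le\Fc(u,\mu)$ for every $(u,\mu)\in\Xc$ and every $(u_\eps,\mu_\eps)\towt(u,\mu)$, and the construction of an explicit recovery sequence realising the reversed $\liminf$ inequality.

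For the upper bound, I would first pass to a subsequence so that $|u_\eps|^{2^{\ast}}{\rm d}\xi$ converges weakly-$*$ in $\mea$ to a nonnegative Radon measure $\nu$. A Heisenberg analogue of the Lions concentration-compactness principle then yields the atomic decomposition
\[
\nu=|u|^{2^{\ast}}{\rm d}\xi+\sum_{j\in J}\nu_j\delta_{\xi_j},\qquad \nu_j\le \Sob\,\mu_j^{2^{\ast}/2},
\]
where $\{\xi_j\}\subset\Omb$ is at most countable and $\mu_j$ are the atoms of $\mu$ at the same points. Since $\Omega$ is bounded and $\eps<2^{\ast}-2$, H\"older's inequality provides
\[
\int_\Omega|u_\eps|^{2^{\ast}-\eps}\,{\rm d}\xi\le\left(\int_\Omega|u_\eps|^{2^{\ast}}\,{\rm d}\xi\right)^{1-\eps/2^{\ast}}|\Omega|^{\eps/2^{\ast}},
\]
so sending $\eps\to 0$ delivers $\limsup\Fep(u_\eps,\mu_\eps)\le\nu(\Omb)\le\int_\Omega|u|^{2^{\ast}}{\rm d}\xi+\Sob\sum_j\mu_j^{2^{\ast}/2}=\Fc(u,\mu)$.

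For the recovery sequence, I would rely on explicit Jerison-Lee bubbles. Decompose the atomic part of $\mu$ as $\sum_j\mu_j\delta_{\xi_j}$ and fix an extremal $U$ of~(\ref{critica0}) normalised so that $\int_{\h}|D_HU|^2=1$ and $\int_{\h}|U|^{2^{\ast}}=\Sob$. Given a vanishing scale $r_\eps\downarrow 0$ and smooth cutoffs $\eta_j$ supported in pairwise disjoint neighbourhoods of the atoms, define
\[
U_{j,\eps}(\xi):=\mu_j^{1/2}\,r_\eps^{-(Q-2)/2}\,\eta_j(\xi)\,U\bigl(\delta_{1/r_\eps}(\xi_j^{-1}\circ\xi)\bigr),
\]
so that the $\delta_\lambda$-homogeneity of $U$ and of Haar measure yields $|D_HU_{j,\eps}|^2{\rm d}\xi\tows\mu_j\delta_{\xi_j}$ and $|U_{j,\eps}|^{2^{\ast}}{\rm d}\xi\tows\Sob\,\mu_j^{2^{\ast}/2}\delta_{\xi_j}$, while $U_{j,\eps}\rightharpoonup 0$ in $L^{2^{\ast}}(\Omega)$. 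Put $u_\eps:=u+\sum_j U_{j,\eps}$ and $\mu_\eps:=|D_Hu_\eps|^2{\rm d}\xi+\bigl(\mu-|D_Hu|^2{\rm d}\xi-\sum_j\mu_j\delta_{\xi_j}\bigr)$; the bracketed measure is nonnegative by the admissibility of $(u,\mu)$, and after an infinitesimal rescaling to absorb the cutoff error one has $\mu_\eps(\Omb)\le 1$ and $(u_\eps,\mu_\eps)\towt(u,\mu)$. Disjointness of the supports and the uniform convergence of $|t|^{2^{\ast}-\eps}$ to $|t|^{2^{\ast}}$ on bounded ranges then yield $\liminf\Fep(u_\eps,\mu_\eps)\ge\Fc(u,\mu)$.

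The main obstacle I expect is the sharp atomic bound $\nu_j\le\Sob\,\mu_j^{2^{\ast}/2}$ in the Heisenberg concentration-compactness step. In contrast to the Euclidean setting, neither symmetrisation nor classical elliptic regularity is available; one has to argue directly from the Folland-Stein inequality~(\ref{folland}), applied to cutoff sequences $\eta_\rho u_\eps$ concentrating around each atom, carefully controlling the commutators between the horizontal vector fields $X_i$ and~$\eta_\rho$. Once that localised sharp inequality is at hand, what remains is a routine combination of weak-$*$ compactness in $\mea$, H\"older interpolation between the exponents $2^{\ast}-\eps$ and $2^{\ast}$, and scaling computations with the Jerison-Lee profile.
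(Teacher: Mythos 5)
Your overall strategy coincides with the paper's: the $\Gamma^{+}$-limsup inequality via the concentration-compactness decomposition of $|u_\eps|^{2^\ast}{\rm d}\xi$ followed by H\"older's inequality, and the $\Gamma^{+}$-liminf via an explicit recovery sequence built from cut-off, rescaled Jerison--Lee bubbles weighted by $\mu_j^{1/2}$ and superposed on $u$. One remark on the limsup half: the sharp atomic bound $\nu_j\le \Sob\mu_j^{2^\ast/2}$, which you single out as the main obstacle, does not need to be reproved; it is exactly the Heisenberg concentration-compactness alternative of Ivanov--Vassilev (Theorem~\ref{thm_cca} here), which the paper simply invokes. So that part of your argument is complete once you cite it.

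The liminf half, as written, has genuine gaps. First, you place bubbles at \emph{all} atoms of $\mu$ simultaneously, with cutoffs ``supported in pairwise disjoint neighbourhoods of the atoms''; since the atomic set $\{\xi_j\}$ is in general countably infinite and may accumulate, such neighbourhoods need not exist, and the infinite sum $\sum_j U_{j,\eps}$ is not justified. The paper avoids this by first establishing the liminf inequality on the ${\mathpzc{t}}$-dense auxiliary class $\Xc_N$ of pairs whose measure has at most $N$ atoms and satisfies the \emph{strict} bound $\mu(\Omb)<1$, and then concluding by a diagonal argument using the continuity of $\Fc$ along that approximation; the strict inequality is also what rigorously supplies the room you only gesture at with ``an infinitesimal rescaling to absorb the cutoff error'' (note in addition that, since you do not cut $u$ off near the atoms, the gradient cross terms $\int D_Hu\cdot D_HU_{j,\eps}\,{\rm d}\xi$ vanish only in the limit, so $\mu_\eps(\Omb)\le 1$ can fail at finite $\eps$ without that room; the paper instead multiplies $u$ by a cutoff $\varphi_\rho$ vanishing near the atoms so that the cross terms are identically zero). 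Second, your justification of $\int_\Om|U_{j,\eps}|^{2^\ast-\eps}{\rm d}\xi\to \Sob\,\mu_j^{2^\ast/2}$ by ``uniform convergence of $|t|^{2^\ast-\eps}$ to $|t|^{2^\ast}$ on bounded ranges'' does not apply: the bubbles are unbounded, with $\sup w_\eps\sim r_\eps^{-(Q-2)/2}$. The correct computation (Lemma~\ref{prop_step1} in the paper) splits the integral over $\{w_\eps<1\}$ and $\{w_\eps\ge1\}$ and uses that $(\sup w_\eps)^{\eps}\to1$; the limit then does come out as claimed. Both gaps are repairable along the paper's lines, but the recovery-sequence construction is incomplete as it stands.
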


In order to prove such a result in the most general situation -- thus requiring no additional regularity assumptions nor special geometric features on the domains --  we attack the problem pursuing a new approach and for this we rely on De Giorgi's $\Gamma$-convergence techniques. This is in the same spirit of previous results regarding the classical Sobolev embedding in the Euclidean framework, as seen in~\cite{AG03,Pal11,Pal11b}.
However, our main proof deviates significantly, as we explicitly construct the optimal recovery sequences: a departure from the implicit existence results demonstrated in the aforementioned papers, which rely on compactness and locality properties of the $\Gamma$-limit energy functional.
 Interestingly, our strategy bears resemblance to the
  {\it fractional Sobolev spaces\,} scenario in \cite{PPS15}, although notable differences inevitably arise due to the natural discrepancy
between the involved frameworks.

\vspace{2mm}

    As a corollary of Theorem~\ref{the_gamma-intro} above, we can deduce that the sequences of maximizers~$\{u_\eps\}$ for the subcritical Sobolev quotient~$\Sob_\eps$ do concentrate energy at one point~$\xi_{\rm o}\in\Omb$, in clear accordance
  with the analogous result in the Euclidean case.

   \vspace{2mm}
  \begin{theorem}\label{cor_concentration}
  	Let $\Om \subset \h$ be a bounded domain and
	let~$\ue\in \Sc(\Omega)$ be a maximizer for~$S^{\ast}_{\eps}$.  Then, as~$\varepsilon=\varepsilon_k \to 0$, up to subsequences, we have that
  	there exists $\xi_{\rm o} \in \Omb$ such that
  	\[
  	u_k=u_{\eps_k}  \rightharpoonup 0  \ \text{in} \ L^{2^\ast}\!(\Omega),
  	\]
  	and
  	\[
  	\dys  |D_H u_k|^{2}{\rm d}\xi \tows \delta_{\xi_{\rm o}} \ \text{in} \ \mea,
  	\]
  	with~$\delta_{\xi_{\rm o}}$ being the Dirac mass at~$\xi_{\rm o}$.
  \end{theorem}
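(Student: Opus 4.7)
The plan is to combine Theorem~\ref{the_gamma-intro} with the standard variational consequence of $\gamp$-convergence on a compact ambient space, and then to characterize in a rigid way the maximizers of the limit functional~$\Fc$ over~$\Xc$.

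First I would set $\mue:=|D_H\ue|^{2}{\rm d}\xi$, so that $(\ue,\mue)\in \Xc$, and apply the Folland-Stein inequality~\eqref{folland} in $\Omega$ to bound $\{\ue\}$ uniformly in $L^{2^\ast}(\Omega)$. Combined with the constraint $\mue(\Omb)\leq 1$, this yields $\mathpzc{t}$-sequential compactness for the sequence of maximizers, so, up to a subsequence, $(\ue,\mue)\towt (u,\mu)$ for some $(u,\mu)\in\Xc$. The abstract principle that $\gamp$-convergence on a (sequentially) compact space transfers maxima and cluster points of maximizers then forces $\Sob_\eps\to\max_\Xc\Fc$ and $(u,\mu)$ to be a maximizer of $\Fc$.

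The core analytic step is to show $\max_\Xc\Fc=\Sob$ and to identify the extremal pairs. I would decompose $\mu=|D_H u|^{2}{\rm d}\xi+\nu$ with $\nu\geq 0$, and let $a:=\int_\Omega|D_H u|^{2}$ and $b:=\sum_j \mu_j$, so that $a+b\leq\mu(\Omb)\leq 1$. By~\eqref{folland}, $\int_\Omega |u|^{2^\ast}\leq \Sob\,a^{2^\ast/2}$; since $2^\ast/2>1$, the elementary subadditivity of $t\mapsto t^{2^\ast/2}$ on nonnegative reals yields $\sum_j \mu_j^{2^\ast/2}\leq b^{2^\ast/2}$ and $a^{2^\ast/2}+b^{2^\ast/2}\leq(a+b)^{2^\ast/2}\leq 1$. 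Chaining these bounds produces $\Fc(u,\mu)\leq \Sob$, with equality forcing at most one atom to be non-zero and $a\cdot b=0$. The case $b=0$ would produce an extremizer for~\eqref{critica} in $\Sc(\Omega)$, contradicting the non-attainment recalled in the discussion following~\eqref{critica}; hence $a=0$, $u\equiv 0$, and $\mu=\delta_{\xi_{\rm o}}$ for some $\xi_{\rm o}\in\Omb$, which is exactly the desired assertion.

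I expect the main obstacle to be the rigidity step just described, and in particular the exclusion of the concentration-free equality case via the non-attainment of~$\Sob_\Omega$ on bounded domains. The $\gamp$-to-max abstract passage, while classical, also requires a careful check of the $\mathpzc{t}$-sequential compactness of $\Xc$ and of the compatibility of the $\gamp$-upper and lower bounds with the constrained maximization procedure.
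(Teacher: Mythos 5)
Your proposal is correct and takes essentially the same route as the paper: the $\gamp$-convergence of Theorem~\ref{the_gamma-intro} on the sequentially compact space~$\Xc$ transfers the maximizers $(\ue,|D_H\ue|^{2}{\rm d}\xi)$ to a maximizer of~$\Fc$, and your rigidity step is precisely the paper's Lemma~\ref{lem_sstar} (Folland--Stein inequality, the elementary inequality $a^{2^\ast/2}+b^{2^\ast/2}\le (a+b)^{2^\ast/2}$, and non-attainment of~$S^\ast$ on bounded domains), with the paper additionally recording in Lemma~\ref{pro_argmax} that \emph{all} maximizers of~$\Fep$ over~$\Xc$ have the form $(\ue,|D_H\ue|^{2}{\rm d}\xi)$, a converse you do not need for the stated sequence. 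Only a terminological slip: the inequality you invoke is \emph{super}additivity of $t\mapsto t^{2^\ast/2}$ (the paper's ``convexity trick''), not subadditivity, though you write the inequalities in the correct direction.
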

The concentration result above in very general domains was in fact one of our first goals, as well as the veritable guideline to the choice of the topology in the $\Gamma^+$-convergence Theorem~\ref{the_gamma-intro} in order to provide a convergence of functionals which guarantees that maximizers of the approximating
functionals converge to maximizers of a precise limit functional. Indeed, at this stage the proof of Theorem~\ref{cor_concentration} does reduce to a fine analysis of the form of the maxima of the functional~$\mathcal{F}$ defined by~\eqref{def_fue1}; see~Section~\ref{sec_concentration}. For further related results, we refer to the very recent paper~\cite{PP23h}, where two of the authors were able to localize the concentration point in terms of the Green function associated to the domain~$\Om$.

\vspace{2mm}

    \vspace{2mm}
    In the second part of the present paper, we investigate a way to circumvent the lack of compactness in the critical Folland-Stein-Sobolev embedding, by proving the validity of the so-called Global Compactness in the Heisenberg framework.

    Since the seminal paper~\cite{Str84} by Struwe, the celebrated Global Compactness in the Sobolev space $H^1$ has become a fundamental tool in Analysis. It has played a crucial role in numerous existence results, such as e.~\!g. for ground states solutions for nonlinear Schr\"odinger equations, for solutions of Yamabe-type equations in conformal geometry, for prescribing $Q$-curvature problems, harmonic maps from Riemann surfaces into Riemannian manifolds, Yang-Mills connections over four-manifolds, planar Toda systems, etc. The involved literature is really too wide to attempt any reasonable account here. In Theorem~\ref{thm_glob_comp} below, we state the counterpart of Struwe's Global Compactness in the Heisenberg group setting.

\vspace{2mm}
In order to precisely state our result, consider  for any fixed~$\lambda\in\r$ the problem,
\begin{equation}\label{plambda}
	-\Delta_Hu-\lambda u-|u|^{2^*-2}u=0\qquad\text{in } (\Sc(\Omega))', \tag{$P_\lambda$}
\end{equation}
together with its corresponding Euler--Lagrange functional~$\El:\Sc(\Omega)\to\r$ given by
\begin{equation*}
	\El(u) =\frac12 \int_{\Om}|D_H u|^2 	\,{\rm d}\xi -\frac{\lambda}{2}\int_{\Om}|u|^2	\,{\rm d}\xi-\frac{1}{2^*}\int_{\Om}|u|^{2^*}	\,{\rm d}\xi.
\end{equation*}
Consider  also the following limiting problem,
\begin{equation}\label{pzero}
	-\Delta_Hu-|u|^{2^*-2}u=0\qquad\text{in } (\Sc(\Om_{\rm o}))',\tag{$P_0$}
\end{equation}
where $\Om_{\rm o}$ is either a half-space or the whole~$\h$;
i.~\!e., the Euler-Lagrange equation corresponding to the energy functional~$\Es: \Sc(\Om_{\rm o})\to\r$,
\begin{equation*}
	\Es(u)=\frac12 \int_{\Om_{\rm o}}|D_H u|^2 	\,{\rm d}\xi -\frac{1}{2^*}\int_{\Om_{\rm o}}|u|^{2^*}	\,{\rm d}\xi.
\end{equation*}

{Before stating the next result, we just recall that an half-space of $\h$ is simply an half-space of $\mathbb{R}^{2n+1}$, according to the definition in \cite{LU98}, see also \cite{CU01}.}
We have the following
\begin{theorem}[\bf Global Compactness in the Heisenberg group]\label{thm_glob_comp}
\text{}\\	Let~$\{u_k\}\subset \Sc(\Omega)$ be a Palais-Smale sequence for~$\El$; i.~\!e., such that
	\begin{eqnarray}
		&&\El(u_k)\leq c\quad \text{for all }k,\label{PS1}\\*
		&&{\rm d}\El(u_k) \rightarrow 0\quad \text{as } k\to\infty \quad \text{in }(\Sc(\Omega))'\label{PS2}.
	\end{eqnarray}
	Then, there exists a (possibly trivial) solution~$\uz\in \Sc(\Omega)$ to~\eqref{plambda} such that, up to a subsequences, we have
	\[
	u_k\rightharpoonup\uz\quad \text{as } k\to\infty \quad\text{in }\Sc(\Omega).
	\]
	Moreover, either the convergence is strong or there is a finite set of indexes~$\Irm=\{1,\dots,J\}$ such that for all~$j\in\Irm$ there exist a
	nontrivial solution~$\uj\in \Sc(\Om_{\rm o}^{(j)})$ to~\eqref{pzero} with $\Om_{\rm o}^{(j)}$ being either a half-space or the whole~$\h$,
	a sequence of nonnegative numbers~$\{\lambdakj\}$ converging to zero and a sequences of points~$\{\xikj\}\subset\Om$ such that, for a renumbered subsequence, we have for any~$j\in\Irm$
	\[
	\ukj(\cdot):=\lambdakj^{\frac{Q-2}{2}}u_k\big(\tau_{\xikj}\big(\delta_{\lambdakj}(\cdot)\big)\big) \rightharpoonup \uj(\cdot)\quad\text{in }\Sc(\h) \quad\text{ as }k \to\infty.
	\]
	In addition, as~$k\to\infty$ we have
	\begin{eqnarray}
		&&u_k(\cdot)=\uz(\cdot)+\sum_{j=1}^{J}\lambdakj^{\frac{2-Q}{2}}u_k\big(\delta_{1/\lambdakj}\big(\tau_{\xikj}^{-1}(\cdot)\big)\big)+o(1) \quad\text{ in }\Sc(\h);\label{propr1}\\*
		&&\left|\log{\frac{\lambda_k^{(i)}}{\lambdakj}}\right|+\left|\delta_{1/\lambdakj}\big(\xikj^{-1}\circ \xi_k^{(i)}\big)\right|_{\h}\to\infty\quad\text{for }i\neq j,\ \,i,j\in\Irm;\label{propr2}\\*
		&&\|u_k\|_{\Sc}^2=\sum_{j=0}^{J}\|\uj\|_{\Sc}^2+o(1);\label{propr3}\\*
		&&\El(u_k)=\El(\uz)+\sum_{j=1}^{J}\Es(\uj)+o(1)\label{propr4}.
	\end{eqnarray}
\end{theorem}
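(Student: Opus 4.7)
My approach would follow the classical induction-on-energy-level strategy of Struwe, carefully adapted to the non-commutative sub-Riemannian structure of~$\h$. The starting point is the boundedness of~$\{u_k\}$ in~$\Sc(\Om)$: the standard manipulation
\[
\El(u_k)-\tfrac12\langle{\rm d}\El(u_k),u_k\rangle=\Big(\tfrac12-\tfrac{1}{2^{\ast}}\Big)\|u_k\|_{L^{2^{\ast}}}^{2^{\ast}},
\]
combined with~\eqref{PS1}--\eqref{PS2}, produces an $L^{2^{\ast}}$-bound which via~\eqref{folland} and the Palais--Smale relation itself upgrades to a full $\Sc$-bound. Extracting a subsequence $u_k\rightharpoonup \uz$ weakly in~$\Sc(\Om)$ and strongly in~$L^p(\Om)$ for every~$p<2^{\ast}$, one tests~\eqref{PS2} against arbitrary $\varphi\in C^\infty_0(\Om)$ to verify that $\uz$ solves~\eqref{plambda}.

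Setting $v_k:=u_k-\uz$ (extended by zero outside $\Om$), a Br\'ezis--Lieb type lemma adapted to the Heisenberg subgradient yields the asymptotic orthogonalities
\[
\|u_k\|_{\Sc}^2=\|\uz\|_{\Sc}^2+\|v_k\|_{\Sc}^2+o(1),\qquad \|u_k\|_{L^{2^{\ast}}}^{2^{\ast}}=\|\uz\|_{L^{2^{\ast}}}^{2^{\ast}}+\|v_k\|_{L^{2^{\ast}}}^{2^{\ast}}+o(1),
\]
from which $\El(u_k)=\El(\uz)+\Es(v_k)+o(1)$ and ${\rm d}\Es(v_k)\to 0$ in $(\Sc(\h))'$; thus $\{v_k\}$ is itself a Palais--Smale sequence for~$\Es$ on the whole~$\h$. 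If $v_k\to 0$ strongly, we are done; otherwise we must extract a bubble. Vanishing for the measures $|v_k|^{2^{\ast}}{\rm d}\xi$ is ruled out, as it would force strong convergence against the nonvanishing of the defect, and dichotomy is excluded by the energy decoupling, so Lions-type concentration-compactness guarantees genuine concentration.

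The core step is then the construction of the first concentrating profile via the Heisenberg concentration function $Q_k(r):=\sup_{\xi\in\h}\int_{B_r(\xi)}|v_k|^{2^{\ast}}{\rm d}\zeta$: fixing a suitably small $\eta>0$, I would select $\lambda_k^{(1)}\to 0$ and $\xi_k^{(1)}\in\Om$ such that $Q_k(\lambda_k^{(1)})=\eta$ is attained at $\xi_k^{(1)}$, and rescale
\[
w_k(\xi):=\big(\lambda_k^{(1)}\big)^{\frac{Q-2}{2}}v_k\big(\tau_{\xi_k^{(1)}}(\delta_{\lambda_k^{(1)}}(\xi))\big).
\]
Scale and translation invariance of the Folland--Stein norms makes $\{w_k\}$ bounded in $\Sc(\h)$ with nontrivial mass in the unit Heisenberg ball, whence $w_k\rightharpoonup u^{(1)}\not\equiv 0$ along a subsequence. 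According to whether the rescaled boundary distance ${\rm dist}(\xi_k^{(1)},\partial\Om)/\lambda_k^{(1)}$ diverges or stays bounded, the rescaled domains $\delta_{1/\lambda_k^{(1)}}(\tau_{\xi_k^{(1)}}^{-1}\Om)$ exhaust either~$\h$ itself or a half-space~$\Om_{\rm o}^{(1)}$, and the Palais--Smale condition passes to the limit so that $u^{(1)}$ solves~\eqref{pzero} on the limit domain.

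One then iterates: the subtraction
\[
v_k^{(1)}(\cdot):=v_k(\cdot)-\big(\lambda_k^{(1)}\big)^{-\frac{Q-2}{2}}u^{(1)}\big(\delta_{1/\lambda_k^{(1)}}(\tau_{\xi_k^{(1)}}^{-1}(\cdot))\big)
\]
is again Palais--Smale for~$\Es$ with strictly lower energy, and the extraction is repeated on~$v_k^{(1)}$. The procedure terminates in finitely many steps because every nontrivial solution to~\eqref{pzero} carries energy bounded below by a fixed positive $c_\star>0$ (via a Pohozaev/Nehari argument combined with~\eqref{folland}), so~\eqref{PS1} caps the number of bubbles by $J\leq c/c_\star$; the telescopic sum then yields~\eqref{propr1} and~\eqref{propr3}--\eqref{propr4}, while the separation property~\eqref{propr2} follows from the observation that two comparable scales with nearly coincident centers would produce overlapping profiles inconsistent with the concentration-function selection. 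The main obstacle is precisely the half-space alternative: the lack of symmetrization in~$\h$, the presence of characteristic points on~$\partial\Om$, and the genuine distinction between half-spaces with boundary parallel or transverse to the center of~$\h$ (cfr.~\cite{LU98,U99}) make the boundary analysis considerably more delicate than in the Euclidean case, and seem to require a quantitative control of the boundary concentration set not needed in Struwe's original framework.
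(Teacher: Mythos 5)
You have chosen Struwe's original induction-on-energy scheme, which is a genuinely different route from the paper's. The proof here deliberately avoids the iterated bubble extraction: it feeds the Palais--Smale sequence into the G\'erard--Benameur Profile Decomposition (Theorem~\ref{thm_prof}), which delivers all profiles, scales and centers at once together with the orthogonality~\eqref{strange} and the norm splittings~\eqref{prof2}--\eqref{prof3}; the remaining work is then only to show that the scales vanish, that the centers stay within ${\rm O}(\lambdakj)$ of~$\Om$, that each profile solves~\eqref{pzero} on the limit of the rescaled domains, and that the remainders tend to zero in~$\Sc(\h)$. Your route, if completed, would be self-contained and closer to~\cite{Str84} and to Citti's treatment of nonnegative sequences in~\cite{Cit95}; the paper's route buys precisely the avoidance of the steps your outline leaves open.

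Those steps are genuine gaps as written. (i) The nontriviality of the first profile does not follow merely from ``nontrivial mass in the unit Heisenberg ball'': the mass $\eta$ carried by $B_1$ could itself concentrate at a finer scale and the weak limit could vanish; ruling this out requires the standard local small-energy argument (testing the equation against cut-offs and using that $\eta$ is below a fixed fraction of the Sobolev constant), which must be rerun with the horizontal gradient and is not indicated. Also, the appeal to the Lions trichotomy to ``exclude dichotomy by energy decoupling'' is not how this step works; the concentration-function selection replaces the trichotomy entirely. (ii) The assertion that $v_k^{(1)}$ is again Palais--Smale for~$\Es$ with strictly lower energy is the technical core of Struwe's proof: it needs an iterated Br\'ezis--Lieb splitting of the nonlinearity at the concentration scale, i.e.\ control of the $(2^\ast)'$-norm of $|v_k|^{2^\ast-2}v_k-|v_k^{(1)}|^{2^\ast-2}v_k^{(1)}-|W_k|^{2^\ast-2}W_k$ with $W_k$ the rescaled bubble, carried out under the non-commutative translations~$\tau_{\xikj}$ and anisotropic dilations~$\delta_\lambda$. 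You assert it; this is exactly the part the paper describes as cumbersome to adapt. (iii) In the boundary regime ${\rm dist}(\xi_k^{(1)},\partial\Om)={\rm O}(\lambda_k^{(1)})$ you need both that the rescaled domains converge to a half-space --- which by Lemma~\ref{citti_uguzzoni} requires smoothness and, at characteristic points, the $H$-flatness of Definition~\ref{H_flat}, since otherwise the blow-up is a paraboloid-type set --- and that the weak limit actually belongs to~$\Sc$ of that half-space, the point where the Euclidean proof uses reflection and extension across the boundary, tools unavailable in~$\h$. Your closing paragraph concedes this difficulty without resolving it, so the proposal stands as a plausible outline of an alternative proof rather than a proof.
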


The original proof by Struwe in~\cite{Str84} consists of a subtle analysis concerning how the Palais-Smale condition does fail for the functional~$\mathcal{E}^\ast$, based on rescaling arguments, used in an iterated way to extract convergent subsequences with nontrivial limit, together with some slicing and extension procedures on the sequence of approximate solutions to~\eqref{plambda}. Such a proof revealed to be very difficult to extend to different frameworks, and the aforementioned strategy seems even more cumbersome to be adapted to the Heisenberg framework considered here. {However, several remarkable investigations regarding the behaviour of Palais-Smale sequences for the critical energy~$\Es(\cdot)$ can be still found in~\cite{Cit95} where the author proves an analogous representation result for {nonnegative Palais-Smale sequences} in order to prove existence results of positive solutions to a class of semilinear Dirichlet problem involving critical growth, in the same spirit of the well-known Brezis-Nirenberg result for the classical Laplacian. 
Moreover, it is worth mentioning the relevant paper~\cite{GMM18}, where the authors deduce the desired Global Compactness in the important case of critical energies on the $(2n+1)$-dimensional sphere (equipped with the CR structure) associated to the sub-elliptic intertwining operator~$\mathcal{L}_{2\kappa}$ of order~$2\kappa$, with~$\kappa \in \mathbb{R}$ being such that~$0 < 2\kappa < Q$; also covering fractional CR Yamabe energies.

In the present manuscript, we completely changes {the} approach to the problem, and we will be able to show how to deduce the Global Compactness result in Theorem~\ref{thm_glob_comp} for very general domains~$\Omega\subset \h$ in quite a simple way by means of the
so-called {\it Profile Decomposition}, firstly proven by  G\'erard in~\cite{Ger98} for bounded sequences in the fractional (Euclidean) space~${H}^s$.}

\vspace{3mm}

{\it To summarize, the contribution of the present paper} is twofold: we investigate  the De Giorgi's $\Gamma$-convergence energy approximation of the critical Folland-Stein-Sobolev embedding in turn implying an expected concentration result in very general (possibly non smooth) domains; we extend the Global Compactness to the Heisenberg group framework.

\vspace{2mm}
\subsection{Related open problems and further developments}

Starting from the results proven in the present paper, several questions naturally arise.
	\\*[0.5ex]
	\indent $\bullet$~~One can consider to investigate the fractional counterpart of the results proven here; that is, by replacing the $S^1_0$-norm in~\eqref{folland} by the~$S^s_0$-norm of differentiability order~$s\in(0,1)$.  Some of the main tools developed in the present paper and the general approach in the proof of the $\Gamma$-convergence theorem appear to be repeatable at some extents; as well as some of the tools in order to achieve the concentration result; e.~g., the very general Profile Decomposition theorem by G\'erard which is natively presented in the fractional framework. However, most of the proofs in order to get the precise localization result would require a completely new approach because of the nonlocality of the fractional sub-Laplacian operator. In this respect, one should deal with truncations and the resulting error term in the same flavour of the papers~\cite{PP22,MPPP23,PP23}, where a precise quantity, the so-called ``nonlocal tail'', has been firstly introduced to attack very general equations led by fractional sub-Laplacian-type operators.  On the same line of thought, important related results can be find in~\cite{GMM18}. 

Otherwise, quite a different energy approach in the nonlocal framework could be carried out via an auxiliary harmonic extension problem to the Siegel upper half-space, by taking into account that conformally invariant fractional powers of the sub-Laplacian on the Heisenberg group can be given in terms of the scattering operator, as seen in the relevant paper~\cite{FGMT15}.

	\vspace{1.3mm}

$\bullet$~~It could be interesting to study the nonlinear counterpart of both the local and the nonlocal Heisenberg framework. We believe that some of the results could be extended in the same spirit of the Euclidean counterpart, as for instance the $\Gamma$-convergence subcritical approximation scheme
and the related concentration result; see~\cite{Pal11b}. In order to do this, one would need to change the approach to the $\Gamma$-liminf proof by possibly making use of the precious results regarding the lack of compactness for the related $p$-sub-Laplacian energy in~\cite{GV00,Vas06}, and some very recent result regarding the $(s,p)$-sub-Laplacian (as for instance the aforementioned ``nonlocal tail'' approach in~\cite{PP22}), respectively.

	\vspace{1.3mm}
	
		$\bullet$~~A more challenging extension could be that in the very general $H$-type groups setting.
Within this framework, numerous expected results, particularly concerning important properties of the associated extremal functions, remain largely unexplored. A starting point could be the investigation of their subclass of groups of Iwasawa type. For this, one can take advantage of the involved group structure, as well as of important results present in the literature; that is, the investigation
 in~\cite{GV01}, where positive solutions to the CR~Yamabe equations being invariant with respect to the action of the orthogonal group in the first layer of the Lie algebra have been precisely characterized.

	\vspace{1.3mm}
	
	$\bullet$~~It is worth stressing that the limiting domain~$\Om_{\rm o}$ in the Global Compactness Theorem~\ref{thm_glob_comp}  can be either the whole~$\h$ or a half-space. On the contrary, in the original proof in the Euclidean case by~Struwe in~\cite{Str84} one can exclude the existence of nontrivial solutions to the limiting problems in the half-space by Pohozaev identity and unique continuation. This is still the case when dealing with {\it nonnegative} solutions in the fractional framework~$H^s_0$; see~\cite{PP15}. However, in all the remaining Euclidean cases (as, e.~g., in $W^{s,p}$, in $H^2_0$, and so on), this possibility cannot be a priori excluded. The same happens in the sub-Riemannian setting, even in the very special case when a complete characterization of the limiting sets is possible under further regularity assumptions on~$\Omega$ (see~forthcoming Lemma~\ref{citti_uguzzoni}). In such a Heisenberg framework, a very few nonexistence results are known, basically only in the case when the domain reduces to a~half-plane parallel or {transverse} to the group center; see~\cite{LU98,U99,CU01}. Further nonexistence results are known in the class of groups of Iwasawa-type (see~\cite{GV00}).

	\vspace{1.3mm}

		$\bullet$~~Related concentration phenomena could be investigated in the sub-Riemannian setting by considering the second critical exponent, see~\cite{Pas93}, in the same spirit of~\cite{DPMP10,DMM19}, where the authors are able to prove the existence of solutions whose energy concentrates to a Dirac measure of  given geodesics of the boundary of domains with negative inner normal curvature.

	\vspace{1.3mm}

		$\bullet$~~The critical~$L^{2^\ast}$-energy in~\eqref{critica0} could be replaced by an even more general nonconvex and discontinuous energy with critical growth, as in the Euclidean framework studied in~\cite{FM99}, in clear accordance with certain
	 free-boundary problems; in the sub-Riemannian framework, this will lead to the class of problems carefully studied in~\cite{DGP07}.
	For what concerns our approach to the $\Gamma$-convergence result, the techniques appear to be adaptable to some extent, but surely they cannot be repeated in order to achieve the $\Gamma$-liminf inequality because of the lack of the explicit form of the involved extremal functions. For a new proof, one can possibly develop a suitable strategy by density and compactness in clear accordance with the strategy in~\cite{AG03,Pal11}.

\vspace{2mm}
\subsubsection*{{\rm 1.2.}~\bf The paper is organized as follows} In Section~\ref{sec_preliminaries} below we briefly fix the notation. In Section~\ref{sec_subcritical} we perform the analysis of the Sobolev embedding via the $\Gamma$-convergence approximations, which we can apply in Section~\ref{sec_concentration} in order to get the expected concentration results in very general domains. Section~\ref{sec_struwe} is devoted to the proof of the Global Compactness in the Heisenberg group.

  \vspace{2mm}
   \section{Preliminaries}\label{sec_preliminaries}
   In this section, we clarify the notation used throughout the paper by briefly revisiting essential properties of the Heisenberg group. Additionally, we present some well-known results concerning the lack of compactness in the critical Sobolev embedding within the Folland-Stein spaces of the Heisenberg group.

   \vspace{2mm}
   \subsection{The Heisenberg-Weyl group}\label{sec_heis}
    We start by summarily recalling a few well-known facts about the Heisenberg group.
\vspace{1mm}

We denote points~$\xi$ in~$\mathbb{C}^n\times\r \simeq \r^{2n+1}$ by
   \[
   \xi := (z,t) = (x+iy, t) \simeq (x_1,\dots,x_n, y_1,\dots,y_n,t)
   \in \r^n\times \r^n \times \r.
   \]
   For any~$\xi,\xi'\in \r^{2n+1}$,  {\it the group multiplication law}~$\circ$ is defined by
   \[
   	\xi \circ \xi' := \Big(x+x',\, y+y',\, t+t'+2\langle y,x'\rangle-2\langle x,y'\rangle \Big).
  \]
 Given~$\xi'\in\h$, {\it the left translation}~$\tau_{\xi'}$ is defined by
\begin{equation}\label{def_tau}
 \tau_{\xi'}(\xi):=\xi'\circ\xi \qquad \forall \xi \in \h.
 \end{equation}
The group of non-isotropic {\it dilations}~$\{\delta_\lambda\}_{\lambda>0}$ on~$\r^{2n+1}$ is defined by
 \begin{equation}\label{def_philambda}
   	                 \xi   \mapsto \delta_{\lambda}(\xi):=(\lambda x,\, \lambda y,\, \lambda^2 t),
	               \end{equation}
	                  and, 	
	                 as customary,
   $Q\equiv 2n+2$ is the {\it homogeneous dimension of\,}~$\r^{2n+1}$ with respect to~$\{\delta_\lambda\}_{\lambda>0}$,
  so  that the Heisenberg-Weyl group~$\h := (\r^{2n+1},\circ, \delta_\lambda)$ is a homogeneous Lie group.
\vs

   The Jacobian base of the Heisenberg Lie algebra~$\mathscr{H}^n$ is given by
   \[
   Z_j := \partial_{x_j} +2y_j\partial_t, \quad Z_{n+j}:= \partial_{y_j}-2x_j\partial_t, \quad 1 \leq j\leq n, \quad T:=\partial_t.
   \]
   Since
   $[Z_j,Z_{n+j}]=-4\partial_t$ for every  $1 \leq j \leq n$, it plainly follows that
   \begin{eqnarray*}
   	&& \textup{rank}\Big(\textup{Lie}\{Z_1,\dots,Z_{2n},T\}(0,0)\Big)
   	\ = \ 2n+1,
   \end{eqnarray*}
   so that~$\h$ is a Carnot group with the following stratification of the algebra
   \[
 \mathscr{H}^n  = \textup{span}\{Z_1,\dots,Z_{2n}\} \oplus \textup{span}\{T\}.
   \]
   The horizontal (or intrinsic) gradient~$D_H$ of the group is given by
   \[
   D_H u(\xi) := \left( Z_1 u(\xi),\dots, Z_{2n}u(\xi)\right).
   \]
   The Kohn Laplacian (or sub-Laplacian)~$\Delta_H$ on~$\h$ is the second order operator invariant with respect to the left-translations~$\tau_{\xi'}$ defined in~\eqref{def_tau} and homogeneous of degree~$2$ with respect to the dilations~$\delta_\lambda$ defined in~\eqref{def_philambda},
   \[
   \Delta_H :=  \sum_{j=1}^{2n} Z^2_j.
   \]

    \vs
	A {\it homogeneous norm} on~$\h$ is a continuous function (with respect to the Euclidean topology)~$\dd : \h \rightarrow [0,+\infty)$ such that:
	\begin{enumerate}[\qquad\qquad\ \,   \rm(i)]
		\item{
			$ \snr{\delta_\lambda(\xi)}_{\h}=\lambda \snr{\xi}_{\h}$, for every $\lambda>0$ and every $\xi \in \h$;
		}\vspace{1mm}
		\item{
			$\snr{\xi}_{\h}=0$ if and only if $\xi=0$.}
	\end{enumerate}
We say that the homogeneous norm~$\dd$ is {\it symmetric} if
	$
    \snr{\xi^{-1}}_{\h}= \snr{\xi}_{\h}$ for all $\xi \in \h$.
     If~$\dd$ is a homogeneous norm on~$\h$, then the function
	$
	(\xi,\eta)\mapsto \snr{\eta^{-1}\circ \xi}_{\h}$
	is a pseudometric on~$\h$. In particular, we will work with the standard homogeneous norm on~$\h$, also known as {\it Kor\'anyi gauge},
    \begin{equation*}
	|\xi|_{\h} := \left(|z|^4 +t^2\right)^\frac{1}{4} \qquad \forall \xi=(z,t) \in \h.
    \end{equation*}
    As customary, we will denote by~$B_\rr\equiv B_\rr(\eta_{\rm o})$ the ball with center~$\eta_{\rm o} \in \h$ and a radius~$\rr>0$ given by
    \[
    B_\rr(\eta_{\rm o}):=\Big\{\xi \in \h : |\eta_{\rm o}^{-1}\circ \xi|_{\h} < \rr\Big\}.
    \]

\vspace{2mm}
\subsection{Lack of compactness in the critical Sobolev embedding}\label{sec_cca}
In this subsection, we revisit crucial results within the Heisenberg framework, focusing on the analysis of the impact of the lack of compactness in the critical Sobolev embedding.

\vspace{1mm}

Firstly, we state (in the form adapted to our framework) the aforementioned pioneering result by Jerison and Lee~\cite{JL88} which gives  the optimal constant in the critical Sobolev inequality  together with the explicit expression of the functions giving the equality in~\eqref{critica0}.
\begin{theorem}[Corollary~C in~\cite{JL88}]\label{thm_optimal}
Let $2^\ast=2Q/(Q-2)$. Then for any $\lambda>0$ and any $\xi_{\rm o}\in \h$, the function~$U_{ {\lambda}, \xi_{\rm o}}$ defined by
\begin{equation}\label{talentiane}
U_{{\lambda}, \xi_{\rm o}} := U \left(\delta_{\frac{1}{\lambda}}\big(\tau_{\xi^{-1}_0}(\xi)\big)\right)\,,
\end{equation}
where
\begin{equation}\label{talentiane_2}
U(\xi)=c_0\left(\big(1+|z|^2\big)^2+t^2\right)^{-\frac{Q-2}{4}}\quad \forall \xi\in\h,
\end{equation}
is solution to the variational problem~\eqref{critica0}. 
\end{theorem}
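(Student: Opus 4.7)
The plan is to split the argument into three conceptually distinct steps: establishing existence of an extremal, reducing the classification to a rigid PDE problem on $\h$, and then identifying the explicit shape of the extremal via symmetries and a CR--geometric obstruction à la Jerison--Lee. First I would exploit the full group of natural invariances of the quotient $R(u):=\|u\|_{L^{2^\ast}(\h)}^{2^\ast}\big/\|D_Hu\|_{L^{2}(\h)}^{2^\ast}$: a short computation using the definitions in \eqref{def_tau} and \eqref{def_philambda}, together with the fact that the horizontal vector fields~$Z_j$ are homogeneous of degree one under~$\delta_\lambda$ and left--invariant under~$\tau_{\xi'}$, shows that~$R$ is invariant under left translations, non--isotropic dilations, and multiplication by scalars. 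Hence the family $\{U_{\lambda,\xi_{\rm o}}\}$ in \eqref{talentiane} shares the same Sobolev ratio as~$U$, and the statement will reduce to producing a single extremal of the form~\eqref{talentiane_2} and then fixing the constant~$c_0$ so that the constraint $\|D_Hu\|_{L^2}\le 1$ is saturated.

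For the existence step I would apply the Folland--Stein--Sobolev inequality~\eqref{folland} together with a concentration--compactness argument in the spirit of P.~L.~Lions: take a maximizing sequence~$\{u_k\}\subset \Sc(\h)$ for~$R$; by homogeneity we can assume $\|D_Hu_k\|_{L^2}=1$ and $\|u_k\|_{L^{2^\ast}}^{2^\ast}\to \Sob$. Applying the concentration--compactness lemma on the Heisenberg group to the (tight, non--vanishing) sequence of measures $|D_Hu_k|^2\,{\rm d}\xi$ rules out both vanishing (which would contradict $\Sob>0$) and splitting (which, via strict subadditivity of the concentration function, would contradict maximality); one is therefore left with concentration at a point~$\xi_k$ at some scale~$\lambda_k$, and after replacing~$u_k$ by its rescaled and translated version $\lambda_k^{(Q-2)/2}u_k(\tau_{\xi_k}\delta_{\lambda_k}(\cdot))$ the sequence converges strongly in~$L^{2^\ast}$ to a nontrivial extremal~$\tilde U\in \Sc(\h)$. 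Such a $\tilde U$ solves the Euler--Lagrange equation
\begin{equation*}
-\Delta_H \tilde U=\mu\,|\tilde U|^{2^\ast-2}\tilde U\quad\text{in } (\Sc(\h))'
\end{equation*}
for a Lagrange multiplier~$\mu>0$, which can be absorbed into a further scaling.

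The main obstacle is then the rigidity/classification step: showing that every positive solution of the critical CR Yamabe equation on~$\h$ coincides, up to left translation and dilation, with a multiple of~$U$ in~\eqref{talentiane_2}. This is the hard core of Jerison--Lee: one transports~$\tilde U$ to the CR sphere~$S^{2n+1}$ via the Cayley transform, so that~$\tilde U$ becomes a contact form~$\theta$ conformal to the standard contact form and of constant Webster scalar curvature. Jerison and Lee's key input is the construction of a divergence identity for a certain trace--free CR--covariant tensor built from~$\theta$ (an analogue of the Obata tensor in Riemannian geometry) whose integrated divergence has a definite sign; the identity forces the tensor to vanish identically, which in turn forces~$\theta$ to be the pull--back of the standard contact form under a CR automorphism of~$S^{2n+1}$. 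Translating this back to~$\h$ via the Cayley transform, the CR automorphism group projects onto the group generated by left translations, dilations, and unitary rotations of the~$z$--variable, so that~$\tilde U$ must be of the form in~\eqref{talentiane}--\eqref{talentiane_2}.

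It then remains to fix the normalization constant~$c_0$ so that $\|D_HU\|_{L^2(\h)}=1$; this is a direct integral computation using the explicit form of~$U$, the expression of~$\Delta_H$ in real coordinates, and the fact that $U$ solves $-\Delta_H U=c\,U^{2^\ast-1}$ on~$\h$ for a computable~$c$, which also yields the value of~$\Sob$ in terms of~$c_0$. Once~$c_0$ is fixed, all members of the family $\{U_{\lambda,\xi_{\rm o}}\}$ inherit the same~$L^2$--subgradient norm thanks to the invariances verified at the outset, and the theorem follows. I expect the genuinely nontrivial step to be the CR--Obata--type integral identity underlying the classification; the existence and symmetry parts are essentially routine in the sub--Riemannian setting once~\eqref{folland} is available.
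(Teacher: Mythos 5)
The paper does not actually prove this statement: Theorem~\ref{thm_optimal} is imported verbatim as Corollary~C of Jerison--Lee~\cite{JL88}, and the authors use it as a black box (both to assert non-existence of maximizers for~\eqref{critica} and to build the concentrating bubbles in Lemma~\ref{prop_step1}). So there is no internal proof to compare yours against; the relevant comparison is with the argument in~\cite{JL88} itself.

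Measured against that, your outline is structurally faithful --- invariance of the quotient under $\tau_{\xi'}$, $\delta_\lambda$ and scalar multiplication; existence of an extremal by concentration--compactness; reduction to the Euler--Lagrange equation $-\Delta_H \tilde U = \mu |\tilde U|^{2^\ast-2}\tilde U$; and classification via the Cayley transform to the CR sphere and a CR Obata-type rigidity theorem. But as a proof it has a genuine gap exactly where you locate the difficulty: the divergence identity for the trace-free CR-covariant tensor, and the sign argument forcing it to vanish, are not ``a key input'' one can name and move past --- they \emph{are} the theorem. That computation occupies the bulk of~\cite{JL88} and has no shortcut; moreover, before the Obata step one also needs to know that an extremal can be taken positive and smooth (replace $\tilde U$ by $|\tilde U|$, then subelliptic regularity and the Harnack inequality for $\Delta_H$), a step your sketch skips. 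Two smaller points: the paper's own statement of $U_{\lambda,\xi_{\rm o}}$ omits the factor $\lambda^{-(Q-2)/2}$ needed to preserve the constraint $\|D_H u\|_{L^2}\le 1$ (only the Sobolev \emph{quotient} is invariant under \eqref{talentiane} as written), so your remark that one must renormalize $c_0$ to saturate the constraint is apt; and the non-vanishing/non-splitting dichotomy in your existence step deserves at least a sentence on why strict subadditivity of $\eps\mapsto S^\ast\eps^{2^\ast/2}$ excludes dichotomy. In short: your proposal is a correct roadmap of the known proof, not a self-contained proof; for the purposes of this paper the honest move is the one the authors make, namely to cite~\cite{JL88}.
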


As in the classical Euclidean setting, the Concentration-compactness alternative in the Heisenberg framework,  see Theorem~\ref{thm_cca} below,  has been shown to be crucial for analyzing the asymptotic behaviour of bounded sequences in the Folland-Stein space. We also refer to \cite{PT22} for related results.

\begin{theorem}[Lemma~1.4.5 in~\cite{IV11}]\label{thm_cca} Let $\Omega \subseteq \h$ be an open subset and let $\{u_k\}$ be a sequence in~$\Sc(\Om)$ weakly converging to $u$ as $k \to \infty$ and such that
\[
|D_H u_k|^2{\rm d}\xi \tows \mu \ \ \ \text{and} \ \ \ |u_k|^{2^{\ast}}{\rm d}\xi\tows \nu \ \ \text{in} \ \mea.
\]
Then, either $u_k \to u$ in $L^{2^{\ast}}_{\rm{loc}}(\Om)$ or  there exists a (at most countable) set of distinct points $\{\xi_j\}_{j\in J}$ and positive numbers $\{\nu_j\}_{j\in J}$ such that we have
\begin{equation*}
\nu=\ |u|^{2^{\ast}}{\rm d}\xi+\sum_{j} \nu_j \delta_{\xi_j}.
\end{equation*}
Moreover, there exist a positive measure $\tilde{\mu} \in \mea$ with {\rm supp}~$\!\tilde{\mu}~\subset~\Omb$ and positive numbers $\{\mu_j\}_{j\in J}$  such that
\begin{equation*}
\mu=|D_H u|^2{\rm d}\xi+\tilde{\mu}+\sum_{j} \mu_j \delta_{\xi_j}, \quad \nu_j \leq S^{\ast} (\mu_j)^{\!\frac{2^{\ast}}2}\, .
\end{equation*}
\end{theorem}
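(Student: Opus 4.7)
The plan is to adapt P.-L.~Lions' concentration--compactness strategy to the sub-Riemannian setting, leaning on the Folland--Stein inequality \eqref{folland} in the form of the constant~$S^\ast$ and on the compactness of the local embedding $S^1_0(\Om)\hookrightarrow L^2_{\textup{loc}}(\Om)$. As a preliminary reduction, I would first handle the weak limit~$u$ via a Brezis--Lieb type identity: setting $v_k:=u_k-u$, one has $v_k\rightharpoonup 0$ in $S^1_0$ and $|u_k|^{2^\ast}-|v_k|^{2^\ast}-|u|^{2^\ast}\to 0$ in $L^1(\Om)$, so that $\nu=|u|^{2^\ast}{\rm d}\xi+\tilde\nu$ for some nonnegative measure~$\tilde\nu$ carried by the defect of compactness of~$\{v_k\}$. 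A parallel decomposition, $\mu=|D_Hu|^2{\rm d}\xi+\tilde\mu$ with $\tilde\mu\geq 0$, follows from weak lower semicontinuity of the horizontal Dirichlet energy and the fact that the cross term $2\int D_Hu\cdot D_Hv_k\to 0$. This reduces the problem to proving the atomic structure of~$\tilde\nu$ and the reverse H\"older bound relating it to~$\tilde\mu$.

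The core step is to test the Sobolev inequality against cut-offs. For any $\phi\in C_0^\infty(\h)$, apply~\eqref{folland} to $\phi v_k$:
\begin{equation*}
\Big(\int_{\h}|\phi v_k|^{2^\ast}{\rm d}\xi\Big)^{2/2^\ast}\,\leq\, S^\ast \int_{\h}|D_H(\phi v_k)|^2 {\rm d}\xi.
\end{equation*}
Expanding $D_H(\phi v_k)=\phi\, D_Hv_k+v_k\,D_H\phi$, the cross and lower-order terms are controlled by $\|v_k\|_{L^2(\textup{supp}\,\phi)}$, which vanishes as $k\to\infty$ by the local compact embedding. Passing to the limit in the vague/weak-$\ast$ sense yields the measure-theoretic reverse H\"older inequality
\begin{equation*}
\Big(\int_{\h}|\phi|^{2^\ast} {\rm d}\tilde\nu\Big)^{2/2^\ast} \leq S^\ast \int_{\h}|\phi|^2 {\rm d}\tilde\mu \qquad\forall\,\phi\in C_0^\infty(\h).
\end{equation*}

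From this functional inequality between the two nonnegative bounded Borel measures $\tilde\nu,\tilde\mu$, the atomic structure of~$\tilde\nu$ follows from a classical measure-theoretic lemma (see Lions): the inequality forces $\tilde\nu$ to be purely atomic, say $\tilde\nu=\sum_{j\in J}\nu_j\delta_{\xi_j}$ with $\nu_j\leq S^\ast(\tilde\mu(\{\xi_j\}))^{2^\ast/2}$; indeed, testing against a sequence of cut-offs concentrating at a Lebesgue point of the absolutely continuous part of~$\tilde\nu$ with respect to itself and using the super-linearity~$2^\ast/2>1$ on the right-hand side rules out any diffuse part of~$\tilde\nu$. Writing $\mu_j:=\tilde\mu(\{\xi_j\})$ and absorbing the remaining non-atomic portion of~$\tilde\mu$ into a non-atomic measure (still denoted~$\tilde\mu$) supported on~$\Omb$ by weak-$\ast$ lower semicontinuity delivers the full decomposition stated in the theorem.

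The main obstacle is checking that the product-rule argument and the choice of cutoffs go through with only the horizontal Sobolev inequality at hand, without access to Euclidean pointwise or symmetrization tools. This is safe in our setting because $D_H$ obeys the classical Leibniz rule $Z_j(\phi v)=\phi\,Z_jv+v\,Z_j\phi$ with smooth bounded coefficients for $\phi\in C_0^\infty$, and the Folland--Stein embedding gives, for each compact $K\Subset \h$, the compact inclusion $S^1_0(K)\Subset L^2(K)$; these two ingredients are exactly what fuel the Euclidean proof. A secondary technical point is that the atoms of $\tilde\nu$ and of~$\tilde\mu$ must be supported at the \emph{same} points~$\xi_j$, which is obtained by reversing the roles of the cutoff $\phi$: testing at a point where $\tilde\mu$ has no atom forces the corresponding $\nu_j$ to vanish through the reverse H\"older bound.
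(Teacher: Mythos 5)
The paper does not prove this statement at all: it is imported verbatim as Lemma~1.4.5 of~\cite{IV11}, so there is no internal proof to compare against. Your proposal reconstructs the standard Lions-type concentration--compactness argument that underlies the cited result (Brezis--Lieb reduction to $v_k=u_k-u\rightharpoonup 0$, a reverse H\"older inequality between the defect measures obtained by testing the Folland--Stein inequality on $\phi v_k$, and a measure-theoretic lemma forcing atomicity), and as a sketch it is essentially correct; the ingredients you rely on (Leibniz rule for the horizontal gradient, compactness of $\Sc(\Om)\hookrightarrow L^2_{\rm loc}$, a.e.\ convergence extracted from the local compact embedding to run Brezis--Lieb) are indeed available in the Heisenberg setting.

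Two points would need repair in a full write-up. First, a normalization slip: with the paper's convention~\eqref{folland}, namely $\|u\|_{L^{2^\ast}}^{2^\ast}\le S^\ast\|D_Hu\|_{L^2}^{2^\ast}$, the limit inequality is
\[
\int_{\h}|\phi|^{2^\ast}\,{\rm d}\tilde\nu\ \le\ S^\ast\Big(\int_{\h}|\phi|^{2}\,{\rm d}\tilde\mu\Big)^{\!2^\ast/2},
\]
not the version you display with $S^\ast$ multiplying $\int|\phi|^2\,{\rm d}\tilde\mu$ after raising the left-hand side to the power $2/2^\ast$; your version would propagate to $\nu_j\le (S^\ast)^{2^\ast/2}\mu_j^{2^\ast/2}$ instead of the stated $\nu_j\le S^\ast\mu_j^{2^\ast/2}$. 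Second, the ``classical measure-theoretic lemma'' is usually proved by differentiating $\tilde\nu$ with respect to $\tilde\mu$ at ``Lebesgue points'', which rests on the Besicovitch covering property; this is not available for the natural gauge balls of $\h$, so the step cannot be invoked literally. It can be bypassed: from the reverse H\"older inequality and inner/outer regularity of Radon measures one gets the set-function estimate $\tilde\nu(E)\le S^\ast\tilde\mu(E)^{2^\ast/2}$ for every Borel~$E$; splitting $\tilde\mu$ into its (at most countably many) atoms plus a non-atomic part and partitioning any Borel set into pieces of arbitrarily small non-atomic $\tilde\mu$-measure then shows, by the superlinearity $2^\ast/2>1$, that $\tilde\nu$ charges only the atoms of $\tilde\mu$. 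With these adjustments your argument goes through and coincides with the proof in the cited reference.
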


\vspace{2mm}
We conclude this section by presenting an expected convergence result for the Sobolev quotients.
\begin{prop}\label{pro_sstareps}
Let $\Ssub$ be defined by~\eqref{sobolev} and let $S^*$ be defined by~\eqref{critica}. Then
\begin{equation*}
\dys \lim_{\eps\to0}\Ssub=S^*.
\end{equation*}
\end{prop}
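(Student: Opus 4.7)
\smallskip
\noindent
\textbf{Proof proposal for Proposition \ref{pro_sstareps}.} The plan is to establish the two inequalities $\limsup_{\eps\to 0} S^*_\eps \leq S^*$ and $\liminf_{\eps\to 0} S^*_\eps \geq S^*$ separately, using H\"older's inequality on one side and a suitable admissible test function together with dominated convergence on the other.

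\smallskip
\noindent
\emph{Upper bound.} For every~$u\in \Sc(\Omega)$ with $\int_\Omega |D_H u|^2\,\text{d}\xi \leq 1$, H\"older's inequality with exponents $2^*/(2^*-\eps)$ and $2^*/\eps$ gives
\begin{equation*}
\int_\Omega |u|^{2^*-\eps}\,\text{d}\xi \;\leq\; |\Omega|^{\eps/2^*}\!\left(\int_\Omega |u|^{2^*}\,\text{d}\xi\right)^{\!(2^*-\eps)/2^*}\!\!\leq\; |\Omega|^{\eps/2^*}(S^*)^{(2^*-\eps)/2^*},
\end{equation*}
having used $\int_\Omega |u|^{2^*}\,\text{d}\xi \leq S^* = S^*_\Omega$ in the last step. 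Passing to the supremum in $u$ and then to the limit superior as $\eps\to 0$ yields $\limsup_{\eps\to 0} S^*_\eps \leq S^*$, since $|\Omega|^{\eps/2^*}\to 1$ and $(S^*)^{(2^*-\eps)/2^*}\to S^*$.

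\smallskip
\noindent
\emph{Lower bound.} Fix $\delta>0$. Since the scaling argument mentioned after \eqref{critica} ensures $S^*_\Omega=S^*$, we can select~$\varphi\in \Sc(\Omega)$ with $\int_\Omega |D_H\varphi|^2\,\text{d}\xi \leq 1$ and $\int_\Omega |\varphi|^{2^*}\,\text{d}\xi > S^*-\delta$; moreover, by a density argument one can take $\varphi\in C^\infty_0(\Omega)$, so in particular~$\varphi$ is bounded. Such~$\varphi$ is admissible in the variational problem~\eqref{sobolev} for every $\eps\in(0,2^*-2)$, hence
\begin{equation*}
S^*_\eps \;\geq\; \int_\Omega |\varphi|^{2^*-\eps}\,\text{d}\xi.
\end{equation*}
The integrand is dominated by $|\varphi|^{2^*}+\mathbf{1}_\Omega$, which lies in $L^1(\Omega)$ because $\Omega$ is bounded, so Lebesgue's dominated convergence gives $\int_\Omega |\varphi|^{2^*-\eps}\,\text{d}\xi\to \int_\Omega |\varphi|^{2^*}\,\text{d}\xi$ as $\eps\to 0$. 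Therefore $\liminf_{\eps\to 0} S^*_\eps \geq S^*-\delta$, and the arbitrariness of~$\delta>0$ concludes the proof.

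\smallskip
\noindent
I do not expect any serious obstacle: the argument is entirely standard, the only mildly delicate point being the use of the identity $S^*_\Omega = S^*$ to guarantee the existence of admissible test functions with critical energy close to~$S^*$, which the authors have already stated in the discussion surrounding~\eqref{critica}.
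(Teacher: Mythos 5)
Your proposal is correct and follows essentially the same route as the paper: Hölder's inequality with exponents $2^*/(2^*-\eps)$ and $2^*/\eps$ for the upper bound, and a near-optimal admissible test function together with the convergence $\int_\Omega|\varphi|^{2^*-\eps}\,{\rm d}\xi\to\int_\Omega|\varphi|^{2^*}\,{\rm d}\xi$ for the lower bound. The only cosmetic differences are that the paper applies Hölder directly to the maximizer $u_\eps$ rather than to all admissible functions, and leaves the dominated convergence step (which your bound $|\varphi|^{2^*}+\mathbf{1}_\Omega$ handles, without even needing $\varphi$ bounded) as ``standard''.
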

\begin{proof}
We were not able to find a precise reference to the literature in the Heisenberg setting, thus we reproduce here a classical elementary proof. Firstly, since~$\Om$ is bounded, the H\"older inequality yields
\begin{equation}\label{eq_3astast}
\dys \limsup_{\eps\to0}\Ssub \leq S^*.
\end{equation}
Indeed, take the maximizers~$\ue \in \Sc(\Om)$  for $\Ssub$;  one has
\begin{eqnarray*}
\dys \Ssub  & = & \int_\Om |\ue|^{2^*-\eps}{\rm d}\xi \ \leq \ \left(\int_\Om|\ue|^{2^*}{\rm d}\xi\right)^{\!\!\frac{2^*-\eps}{2^*}}\!|\Om|^{\frac{\eps}{2^*}} \\*[1ex]
& \leq & (S^*)^{\!\frac{2^*-\eps}{2^*}}|\Om|^{\frac{\eps}{2^*}}.
\end{eqnarray*}
It suffices to pass to the limit as $\eps$ goes to zero, and the inequality in~\eqref{eq_3astast} will follow.

The remaining inequality is a mere consequence of the pointwise convergence of
the energy functional~$\int|\cdot|^{2^\ast-\eps}$ to $\int|\cdot|^{2^\ast}$.
This is standard: for every $\upsilon>0$
 there exists $u_{\upsilon}\in \Sc(\Om)$
 such that $\|D_H u_{\upsilon}\|^2_{L^2(\Omega)}\leq 1$ and
\begin{equation}\label{eq_4ast}
\dys \int_\Om |u_{\upsilon}|^{2^\ast}{\rm d}\xi >S^*-{\upsilon}.
\end{equation}
Clearly, for such a function $u_{\upsilon}$, one has
\[
\Ssub\geq
\int_\Om |u_{\upsilon}|^{2^\ast-\eps}{\rm d}\xi.
\]
Then, combining the preeceding inequality with~\eqref{eq_4ast} and passing to the limit as $\eps$ goes to zero, we get
\begin{eqnarray*}
\dys \liminf_{\eps\to 0}\Ssub & \geq & \lim_{\eps\to 0}\dys \int_\Om |u_{\upsilon}|^{2^\ast-\eps}{\rm d}\xi
\ = \ \dys \int_\Om |u_{\upsilon}|^{2^\ast}{\rm d}\xi  \\[1ex]
& \geq & S^* -{\upsilon},
\end{eqnarray*}
which gives the desired inequality in view of the arbitrariness of ${\upsilon}$.
\end{proof}

\vspace{2mm}
\section{Proof of the $\Gamma$-convergence result in Theorem \ref{the_gamma-intro}}\label{sec_subcritical}
First, we present the functional setting in which we will perform the asymptotic analysis of the Sobolev embeddings via $\Gamma$-convergence.
\subsection{The functional setting}
Let us describe the asymptotic behaviour as $\eps$ goes to $0$ of the maximizers of the Sobolev constant~$\Ssub$ associated to the embedding $\Sc(\Om) \hookrightarrow L^{2^\ast-\eps}$.
It is convenient to restate the variational form of the problem from the introduction; that is,
\begin{equation}\label{problema}
\Sob_\eps= \sup \left\{\Fep(u) \, : \, u\in \Sc(\Omega), \int_{\Omega}|D_H u|^2{\rm d}\xi \leq 1\right\},
\end{equation}
where~$\Fep$ denotes the following family of energy functionals,
\begin{equation*}
\dys \Fep (u) := \int_{\Om} |u|^{2^{\ast}\!-\eps} {\rm d}\xi,
\end{equation*}
on the set $\dys \left\{u \in \Sc(\Omega), \, \int_{\h} |D_H u|^{2} {\rm d}\xi \leq 1 \right\}$.
\vspace{1mm}

As mentioned, the main tool will be the $\Gamma$-convergence in the sense of De Giorgi. For this, it is crucial to
introduce a convenient functional setting, that is, the functional space~$\Xc$ with the choice of the topology~${\mathpzc{t}}$ as presented in Theorem~\ref{the_gamma-intro}. In particular, the topology has to be sufficiently weak to assure the convergence of maximizing sequences and sufficiently strong to allow us to find the desired energy concentration.

The reason for the choice of such a space~$\Xc$ is very natural.
First of all, we are interested in the asymptotic behaviour of the sequence $\{\Fep(u_{\eps})\}$ for every sequence~$\{u_{\eps}\}$ such that $\|D_H u_{\eps}\|^{2}_{L^{2}(\Om)}\leq 1$. Such a constraint on the {\it horizontal Dirichlet energy} of~$u_\eps$ implies that, up to subsequences, there exists $\mu \in \mea$ and $u \in \Sc(\Om)$ such that~$\mu(\Omb)\leq 1$, $|D_H u_{\eps}|^{2}{\rm d}\xi\tows \mu$ in $\mea$ and $u_\eps \rightharpoonup u$ in $\Sc$.
Clearly, by the Sobolev embedding, we also get $u_{\eps} \rightharpoonup u$ in $L^{2^{\ast}}\!(\Om)$.
 \vspace{1mm}
By Fatou's Lemma, we can then deduce that~$\mu\geq|D_H u|^{2}{\rm d}\xi$, so that we can always decompose $\mu$ as follows,
\begin{equation}\label{mu}
\mu=|D_H u|^{2}{\rm d}\xi+\tilde{\mu}+\sum_{j=1}^{\infty}\mu_{j}\delta_{\xi_{j}},
\end{equation}
where $\mu_{j} \in [0,1]$ and $\{\xi_{j}\} \subseteq \overline{\Omega}$ are distinct points; the positive measure $\tilde{\mu}$ can be viewed as the {\it non-atomic part} of the measure $(\mu-|D_H u|^2{\rm d}\xi)$. \vspace{1mm}
In view of such a decomposition, the definition of the space~$\Xc$ given in Theorem~\ref{the_gamma-intro} is extremally natural. Also, the space~$\Xc$ is sequentially compact in the topology ${\mathpzc{t}}$. Indeed, if we take a sequence $\{(u_k, \mu_k)\}\subseteq \Xc$, then $\{u_k\}$ is bounded in $\Sc(\Om)$. Up to subsequences, $\mu_k\tows\mu$ in $\mea$ and $u_k \rightharpoonup u$ in $\Sc(\Om)$ (and in $L^{2^\ast}\!(\Om)$ by the Sobolev embedding) and the inequalities defining~$\Xc$ do still hold for $(u,\mu)$ by weak lower semicontinuity.
In view of the previous analysis, the space~$\Xc$ appears as a sort of completion of  the unit ball of~$\Sc(\Om)$ in the weak topology of the product $L^{2^\ast}\!(\Om)\!\times\mea$.

\vspace{2mm}
We now recall the definition of $\Gamma^+$-convergence adapted to our specific framework.
\begin{defn}
We say that the family $\{\Fep\} \ \Gamma^{+}$-converges to a functional $\Fc: \Xc \rightarrow [0,\infty)$ as $\eps \to 0$ if for every $(u,\mu) \in \Xc$ the following conditions hold:
\begin{itemize}
\item[(i)]{for every sequence $\{(u_{\eps},\mu_\eps)\} \subset \Xc$ such that $ u_{\eps}\rightharpoonup u$ in $L^{2^{\ast}}\!(\Om)$ and $\mu_\eps\tows \mu$ in $\mea$
\[
\Fc(u,\mu)\geq \limsup_{\eps \to 0}\Fep(u_{\eps},\mu_\eps);
\]
}
\item[(ii)]{there exists a sequence $\{(\bar{u}_{\eps}, \mbu)\} \subset \Xc$ such that $\bar{u}_{\eps} \tow u$ in $L^{2^{\ast}}\!(\Om)$, $\bar \mu_\eps \tows \mu$ in $\mea$ and
\[
\Fc(u,\mu)\leq \liminf_{\eps \to 0}\Fep(\bar{u}_{\eps}, \bar \mu_\eps).
\]
}
\end{itemize}
\end{defn}

\subsection{Proof of the $\Gamma^{+}$-limsup inequality}
The $\Gamma^{+}$-limsup inequality~(i) follows from the Concentration-compactness alternative  stated in Section~\ref{sec_preliminaries} via plain application of the H\"older inequality; we have the following
\begin{prop}\label{pro_limsup}
Let $\{\Fep\}$ be the family defined in~\eqref{def_fue1}, and let $\Fc$ be the functional defined in~\eqref{def_fu}. Then, for every $(u,\mu) \in \Xc$ and for every sequence~$\{(u_{\eps},\mu_{\eps})\} \subset \Xc$ such that $(u_{\eps},\mu_{\eps})\towt (u,\mu)$,
\[
\Fc(u,\mu) \geq \limsup_{\eps \to 0}\Fep(u_{\eps},\mu_{\eps}).
\]
\end{prop}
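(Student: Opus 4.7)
My plan is to combine the H\"older inequality in the energy direction with the Concentration-Compactness alternative (Theorem~\ref{thm_cca}) in the gradient direction. Fix a sequence $\{(\ue,\mue)\} \subset \Xc$ with $(\ue,\mue) \towt (u,\mu)$, and pass to a subsequence along which $\limsup_{\eps\to 0} \Fep(\ue,\mue)$ is attained. The constraints $\mue \geq |D_H \ue|^2 {\rm d}\xi$ and $\mue(\Omb) \leq 1$ force $\|D_H \ue\|_{L^2(\Omega)}^2 \leq 1$, so the sequence is bounded in $\Sc(\Omega)$; by Banach-Alaoglu I extract a further subsequence such that $|\ue|^{2^\ast} {\rm d}\xi \tows \nu$ and $|D_H \ue|^2 {\rm d}\xi \tows \sigma$ in $\mea$, while uniqueness of weak limits in $L^{2^\ast}$ upgrades $\ue \rightharpoonup u$ to a convergence in $\Sc(\Omega)$.

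The first key step is the H\"older estimate on the bounded domain $\Omega$,
\begin{equation*}
\int_\Omega |\ue|^{2^\ast-\eps} {\rm d}\xi \leq \left(\int_\Omega |\ue|^{2^\ast} {\rm d}\xi\right)^{\!\frac{2^\ast-\eps}{2^\ast}} |\Omega|^{\frac{\eps}{2^\ast}}.
\end{equation*}
Since $\Omb$ is compact, testing $|\ue|^{2^\ast} {\rm d}\xi \tows \nu$ against the constant function $1 \in C(\Omb)$ yields $\int_\Omega |\ue|^{2^\ast} {\rm d}\xi \to \nu(\Omb)$; together with $|\Omega|^{\eps/2^\ast} \to 1$, I obtain $\limsup_\eps \Fep(\ue,\mue) \leq \nu(\Omb)$. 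Applying Theorem~\ref{thm_cca} to $\{\ue\}$ then produces the atomic decomposition $\nu = |u|^{2^\ast} {\rm d}\xi + \sum_j \nu_j \delta_{\xi_j}$, together with atoms of $\sigma$ at the same points $\xi_j$ of masses $\sigma_j$ satisfying the sharp Sobolev bound $\nu_j \leq S^\ast \sigma_j^{2^\ast/2}$.

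The remaining step is to transfer these atomic lower bounds from $\sigma$ to $\mu$: the pointwise inequality $\mue \geq |D_H \ue|^2 {\rm d}\xi$ passes to the weak-$\ast$ limit when tested against nonnegative functions in $C(\Omb)$, yielding $\mu \geq \sigma$ as measures on $\Omb$. Evaluated on the singleton $\{\xi_j\}$ this forces $\mu(\{\xi_j\}) \geq \sigma_j$, so every $\xi_j$ appears in the atomic part of the decomposition~(\ref{mu}) of $\mu$ with coefficient $\mu_j \geq \sigma_j$. Therefore $\nu_j \leq S^\ast \mu_j^{2^\ast/2}$, and summing (any extra atoms of $\mu$ not matching a $\xi_j$ only enlarge the right-hand side) gives
\begin{equation*}
\limsup_\eps \Fep(\ue,\mue) \leq \nu(\Omb) \leq \int_\Omega |u|^{2^\ast}{\rm d}\xi + S^\ast \sum_j \mu_j^{2^\ast/2} = \Fc(u,\mu).
\end{equation*}
I expect the subtle point to be precisely this atom-transfer: the topology $\mathpzc{t}$ on $\Xc$ is engineered so that $\mu$ dominates the \emph{pre-limit} gradient measure $|D_H \ue|^2 {\rm d}\xi$ rather than merely $|D_H u|^2 {\rm d}\xi$, and without that built-in dominance the concentrated part of the Dirichlet energy could escape the $\Gamma^+$-limsup bound.
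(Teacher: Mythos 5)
Your proof is correct and follows essentially the same route as the paper's: the H\"older estimate on the bounded domain combined with the Concentration-Compactness alternative of Theorem~\ref{thm_cca}, giving $\limsup_\eps \Fep(u_\eps,\mu_\eps)\leq \nu(\Omb)\leq \Fc(u,\mu)$. The only difference is that you spell out explicitly the atom-transfer step, namely that $\mu$ dominates the weak-$\ast$ limit $\sigma$ of $|D_H u_\eps|^2\,{\rm d}\xi$ so that $\nu_j\leq S^\ast\sigma_j^{2^\ast/2}\leq S^\ast\mu_j^{2^\ast/2}$, a point the paper leaves implicit when it applies Theorem~\ref{thm_cca} directly with the atoms of $\mu$; this is a useful clarification but not a different argument.
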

\vspace{0.5mm}
\begin{proof}
Let $\{(u_{\eps},\mu_{\eps})\}$ be a sequence in $\Xc$ such that $(u_{\eps},\mu_{\eps})\towt(u,\mu)$. This yields, as in~\eqref{mu},  that $\mu=|D_Hu|^{2}{\rm d}\xi+\tilde{\mu}+\sum_{j}\mu_{j}\delta_{\xi_{j}}$,  
{for some positive measure~$\tilde{\mu}\in {\mathcal{M}(\Omb)}$},  $\{\mu_j\}\subseteq(0,1]$ and $\{\xi_j\}\subseteq\Omb$.
Then, up to subsequences, there exists a measure~$\nu \in \mea$ such that $|u_{\eps}|^{2^{\ast}}\!{\rm d}\xi \tows \nu$,
and by the Concentration-compactness alternative in~Theorem~\ref{thm_cca} there exist nonnegative numbers $\{\nu_j\}$ such that (up to reordering the points $\{\xi_j\}$ and the~$\{\mu_j\}$)
\begin{equation}\label{eq_cca}
\dys \nu=|u|^{2^\ast}\!{\rm d}\xi+\sum_{j}\nu_{j}\delta_{\xi_{j}}  \ \ \text{and}\ \ \nu_{j}\leq S^{\ast}\mu_{j}^{\frac{2^{\ast}}{2}}.
\end{equation}

We now apply the H\"older inequality, which yields
\begin{eqnarray*}
\dys \Fep(u_{\eps},\mu_{\eps}) \ \equiv \ \int_{\Omega}|u_{\eps}|^{2^{\ast}-\eps}{\rm d}\xi \ \leq \ \left(\int_{\Omega}|u_{\eps}|^{2^{\ast}}{\rm d}\xi\right)^{\!\!\frac{2^{\ast}-\eps}{2^{\ast}}}\!|\Omega|^{\frac{\eps}{2^{\ast}}}.
\end{eqnarray*}
Thus, in view of the definition of $\nu$ and the decomposition~\eqref{eq_cca} in Theorem~\ref{thm_cca}, we have
\begin{eqnarray*}
 \dys \limsup_{\eps\to 0^{+}}\Fep(u_{\eps},\mu_{\eps})
& \leq &  \limsup_{\eps\to 0}\left(\int_{\Omega}|u_{\eps}|^{2^{\ast}}{\rm d}\xi\right)^{\!\!\frac{2^{\ast}-\eps}{2^{\ast}}}\!|\Omega|^{\frac{\eps}{2^{\ast}}} \leq \ \nu(\Omb) \\
& \leq &  \int_{\Omega}|u|^{2^{\ast}}{\rm d}\xi + S^{\ast}\sum_{i=1}^{\infty}\mu_{i}^{\frac{2^{\ast}}{2}} \  \leq \ \Fc(u,\mu),
\end{eqnarray*}
which gives the desired $\Gamma^+$-limsup inequality~(i).
\end{proof}

\vspace{2mm}
\subsection{Proof of the $\Gamma^{+}$-liminf inequality}
The proof of the $\Gamma^+$-liminf inequality~(ii) is extremely delicate, and it differs considerably from the Euclidean case (see, e.~g.,~\cite{AG03,Pal11}) where it has been proven via compactness and locality properties of the $\Gamma^+$-limit. Here, we are closer to the strategy presented in the fractional framework in~\cite{PPS15}, though in the various steps of our proof 
we need to deal with the Heisenberg framework, and we cannot make use of some fractional workarounds and well-established results for the fractional optimal maximizers.
\vspace{2mm}

For any fixed~$N\in\mathbb{N}$, the proof will be performed in the auxiliary space of configurations~${\Xc_N}\subseteq\Xc$ defined as follows,
{\begin{multline}\label{def_xn} 
\Xc_N :=\!\!\Big\{ \!(u,\mu)\!\in \Sc(\Om)\times \mea \,: 
\, \mu=|D_H u|^2{\rm d}\xi+\tilde{\mu} + \!\!\sum_{j=1}^N \mu_j\delta_{\xi_j}, \\ \mbox{with } \mu_{j} \in [0,1], \; \{\xi_{j}\} \subseteq \overline{\Omega},\; \tilde{\mu}\in\mea,\; \mu(\Omb)<1 \Big\}.
\end{multline}}
Since~$\Xc_N$ is ${\mathpzc{t}}$-sequentially dense in~$\Xc$ by a precise approximation, and $\Fc$ is continuous with respect to such an approximation -- see the computations below -- the $\Gamma^+$-liminf inequality will immediately follow by a standard diagonal argument.
\vspace{1mm}

Indeed, for any pair $(u,\mu)\in \Xc$, with $\mu$ decomposed as in \eqref{mu}, it suffices to consider the sequence~$\{(u_N, \mu_N)\}$ defined as follows,
\[
\dys u_N :=   a_N u \ \ \ \text{and} \ \ \
\dys \mu_N  :=   a^2_N|D_H u|^2{\rm d}\xi +  a^2_N\tilde{\mu} + a^2_N\sum_{j=1}^N \mu_j\delta_{\xi_j}\,,
\]
where $\{a_N\}\subset (0,1)$ is any increasing sequence such that $ a_N \to 1$ as $N\to \infty$.
\vspace{1mm}

By definition, the sequence $\{(u_N,\mu_N)\}$ belongs to~${\Xc_N}$, since, for any $N\in \N$, $u_N \in \Sc(\Om)$ and $\mu_N$ is a measure with a finite number of atoms such that $\dys \mu_N(\Omb) \, \leq \,  a^2_N\,\mu(\Omb) \, \leq \, a^2_N \, < 1$. Also, $(u_N, \,\mu_N) \towt (u,\mu)$ as $N\to \infty$, because $u_N \to u$ in $\Sc(\Om)$ (hence weakly in $L^{2^\ast}\!(\Om)$), and, for any $\phi \in C^0_0(\Omb)$,
\begin{eqnarray*}
\dys
 \int_{\Omb} \phi {\rm d}\mu_N & = & \int_{\Om} \phi a^2_N|D_H u|^2{\rm d}\xi + \int_{\Om}\phi a^2_N {\rm d}\tilde{\mu} + a^2_N \sum_{j=1}^N \mu_j\phi({\xi}_j) \\
& = & a^2_N\left(\int_{\Om} \phi |D_H u|^2{\rm d}\xi + \int_{\Om}\!\phi  {\rm d}\tilde{\mu}+ \sum_{j=1}^N \mu_j\phi(\xi_j)\right) \ \overset{N\to \infty}{\longrightarrow} \ \int_{\Om}\!\phi {\rm d}\mu\,.
\end{eqnarray*}
\vspace{1mm}

With such an approximation in mind, we also have
\begin{eqnarray*}
\dys
\Fc(u_N, \mu_N) & = & \int_\Om a^{2^\ast}_N |u|^{2^\ast}{\rm d}\xi + S^{\ast}\sum_{j=1}^N(a^2_N\mu_j)^{\frac{2^\ast}{2}} \\
& = & a^{2^\ast}_N \left(\int_\Om|u|^{2^\ast}{\rm d}\xi + S^{\ast}\sum_{j=1}^N \mu_j^{\frac{2^\ast}{2}}\right) \ \to \ \Fc(u,\mu), \ \ \text{as} \ n \to \infty.
\end{eqnarray*}

\vspace{1mm}

We are  in a position to perform the $\Gamma^+$-liminf inequality in the auxiliary space~$\Xc_N$; i.~\!e.,
\begin{prop}\label{prop_gammaliminf}
Let $\{\Fep\}$ be the family defined in~\eqref{def_fue1}, and let $\Fc$ be the functional defined in~\eqref{def_fu}. Then, for every $(u,\mu)$ in the auxiliary space~$\Xc_N$ defined in~\eqref{def_xn} there exists a sequence~$\{(\bar{u}_{\eps},\bar{\mu}_{\eps})\} \subset$ {$\Xc$} such that $(\bar{u}_{\eps},\bar{\mu}_{\eps})\towt (u,\mu)$ and
\begin{equation}\label{liminfN}
\Fc(u,\mu) \leq \liminf_{\eps \to 0}\Fep(\bar{u}_{\eps},\bar{\mu}_{\eps}).
\end{equation}
\end{prop}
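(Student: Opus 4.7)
The plan is to construct the recovery sequence $\{(\bar u_\eps, \bar \mu_\eps)\}$ explicitly by pasting onto $u$ one concentrating bubble at each atom of $\mu$, built from the Jerison--Lee extremal of Theorem~\ref{thm_optimal}. Given $(u,\mu) \in \Xc_N$ with $\mu = |D_H u|^2 {\rm d}\xi + \tilde\mu + \sum_{j=1}^N \mu_j \delta_{\xi_j}$ and, crucially, $\mu(\Omb) < 1$, set $K_1 := \int_\h |D_H U|^2 {\rm d}\eta$ and $K_2 := \int_\h |U|^{2^\ast} {\rm d}\eta$, so that extremality of $U$ in~\eqref{critica0} yields the identity $K_2 = S^\ast K_1^{2^\ast/2}$. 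For each $j$, I would define
\[
w_{\eps,j}(\xi) \, := \, \sqrt{\mu_j/K_1}\, \varphi_j^\eps(\xi) \, \lambda_\eps^{-(Q-2)/2}\, U\bigl(\delta_{1/\lambda_\eps}(\tau_{\eta_j(\eps)^{-1}}(\xi))\bigr),
\]
with $\lambda_\eps \to 0$ a concentration rate, $\eta_j(\eps) \in \Om$ a suitably placed center converging to $\xi_j$, and $\varphi_j^\eps \in C^\infty_0(\Om)$ a smooth cutoff of scale $\rho_\eps \gg \lambda_\eps$. The candidate recovery pair is then $\bar u_\eps := u + \sum_{j=1}^N w_{\eps,j}$ and $\bar \mu_\eps := |D_H \bar u_\eps|^2 {\rm d}\xi + \tilde\mu$.

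Two scale calibrations drive the construction. First, to accommodate boundary atoms $\xi_j \in \partial\Om$ while ensuring $w_{\eps,j} \in \Sc(\Om)$, I would select $\eta_j(\eps) \in \Om$ with $\eta_j(\eps) \to \xi_j$ and $\text{dist}(\eta_j(\eps), \partial\Om) \gg \lambda_\eps$; since $\xi_j \in \Omb$ can always be approached from within $\Om$, such centers exist, and the decoupling of scales (together with the Kor\'anyi decay of $U$) makes the cutoff error negligible. Second, to push the subcritical $L^{2^\ast-\eps}$ mass of each bubble to the expected critical limit $S^\ast \mu_j^{2^\ast/2}$, I would impose $\eps\,|\log \lambda_\eps| \to 0$ (e.g., $\lambda_\eps = \eps$). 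Indeed, the standard Heisenberg change of variables $\eta = \delta_{1/\lambda_\eps}(\tau_{\eta_j(\eps)^{-1}}(\xi))$ gives
\[
\int_\Om |w_{\eps,j}|^{2^\ast-\eps}\,{\rm d}\xi \, = \, (\mu_j/K_1)^{(2^\ast-\eps)/2}\, \lambda_\eps^{(Q-2)\eps/2} \int_\h |U|^{2^\ast-\eps}\,{\rm d}\eta \,+\, o(1),
\]
and the calibration is precisely what forces $\lambda_\eps^{(Q-2)\eps/2} \to 1$, yielding in the limit $(\mu_j/K_1)^{2^\ast/2}\cdot K_2 = S^\ast \mu_j^{2^\ast/2}$.

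The remaining verifications are routine once the bubbles are in place. The gradient-squared of each $w_{\eps,j}$ is $L^1$-tight on a shrinking neighborhood of $\xi_j$ with total mass tending to $\mu_j$, so $|D_H w_{\eps,j}|^2 {\rm d}\xi \tows \mu_j \delta_{\xi_j}$; the bubble-bubble cross terms $D_H w_{\eps,i} \cdot D_H w_{\eps,j}$ for $i \neq j$ vanish identically once supports become disjoint, and the bubble-background cross terms $D_H u \cdot D_H w_{\eps,j}$ vanish against any continuous test function by Cauchy--Schwarz on the shrinking supports. Coupled with $\bar u_\eps \rightharpoonup u$ in $L^{2^\ast}(\Om)$ (each bubble is $L^{2^\ast}$-bounded and concentrates on a null set), this yields $(\bar u_\eps, \bar \mu_\eps) \towt (u,\mu)$; the constraint $\bar \mu_\eps(\Omb) \leq 1$ for small $\eps$ is exactly where the strict inequality $\mu(\Omb) < 1$ built into $\Xc_N$ is needed. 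For the energy split
\[
\Fep(\bar u_\eps, \bar \mu_\eps) \,=\, \int_\Om |u|^{2^\ast-\eps}\,{\rm d}\xi \,+\, \sum_{j=1}^N \int_\Om |w_{\eps,j}|^{2^\ast-\eps}\,{\rm d}\xi \,+\, o(1),
\]
the elementary inequality $\bigl||a+b|^p-|a|^p-|b|^p\bigr| \leq C(|a|^{p-1}|b|+|a||b|^{p-1})$ with $p = 2^\ast-\eps$, combined with the disjointness of the bubble supports and the boundedness of $u$, drives the mixed terms to zero; dominated convergence then gives $\lim_\eps \Fep = \int_\Om |u|^{2^\ast}\,{\rm d}\xi + S^\ast \sum_j \mu_j^{2^\ast/2} = \Fc(u,\mu)$.

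The main obstacle, compared with the Euclidean strategy of~\cite{AG03,Pal11}, is the triple calibration of $\lambda_\eps$, $\rho_\eps$, and $\text{dist}(\eta_j(\eps), \partial\Om)$: all three scales must tend to zero at compatible rates so that the bubble concentrates at $\xi_j$, fits inside $\Om$ (including at non-smooth boundary atoms), the cutoff error is negligible, and the subcritical-to-critical adjustment $\lambda_\eps^{(Q-2)\eps/2} \to 1$ holds simultaneously. The explicit form of the Jerison--Lee extremal is essential throughout, since neither symmetrization nor Euclidean-style regularity approximations are available in the Heisenberg setting. This is exactly why the proof must be constructive rather than relying on abstract compactness-and-locality arguments on the $\Gamma$-limit.
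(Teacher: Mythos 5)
Your construction is essentially the paper's own: truncated, rescaled Jerison--Lee bubbles with concentration rate calibrated so that $\eps\,|\log\lambda_\eps|\to 0$ (the paper takes $\lambda_\eps=\eps$ outright), normalized to carry Dirichlet mass $\mu_j$, glued with disjoint supports, boundary atoms handled by interior centers, and the strict inequality $\mu(\Omb)<1$ from the definition of $\Xc_N$ used exactly where you use it, to absorb the $o(1)$ errors into the constraint $\bar\mu_\eps(\Omb)\leq 1$. The one genuine deviation is your treatment of the background: you set $\bar u_\eps = u+\sum_j w_{\eps,j}$, whereas the paper sets $\bar u_\eps = u\varphi_\rho + u^{(\Sigma)}_\eps$ with a cutoff $\varphi_\rho$ \emph{vanishing} on neighborhoods of the atoms, so that the background and the bubbles have disjoint supports and every cross term --- in the Dirichlet energy and in the $L^{2^\ast-\eps}$ energy alike --- is identically zero. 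Your route is workable but costs you a Brezis--Lieb-type splitting, and there your justification has a slip: you invoke ``the boundedness of $u$'', but $u\in \Sc(\Om)$ is only in $L^{2^\ast}(\Om)$, not in $L^\infty$. The mixed terms can still be killed, e.g. $\int_\Om |u|^{2^\ast-\eps-1}|w_{\eps,j}|\,{\rm d}\xi \leq \|u\|^{2^\ast-\eps-1}_{L^{2^\ast}(B_{\rho_\eps}(\xi_j))}\,\|w_{\eps,j}\|_{L^{q_\eps}(\h)}$ with $q_\eps=2^\ast/(1+\eps)$, where the first factor vanishes by absolute continuity of the integral on the shrinking supports and the second stays bounded by the Kor\'anyi decay of $U$ and the calibration $\lambda_\eps^{\eps(Q-2)/2}\to 1$; but as written the step is not justified. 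The paper's cutoff of $u$ buys exact orthogonality at the price of having to check that $u\varphi_\rho\to u$ strongly and that the annular gradient terms $\int_{B_{2\rho,j}\smallsetminus B_{\rho,j}}\snr{D_H(u\varphi_\rho)}^2\,{\rm d}\xi$ vanish as $\rho\to 0$, followed by a diagonal argument in $(\eps,\rho)$ --- which is the cleaner trade in this setting.
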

The proof consists of the following steps:
\begin{itemize}
\item For any point~$\xi_j\in\Omb$, we construct a sequence~$\{u^{(j)}_\eps\}$ that concentrates energy at~$\xi_j$; see Lemma~\ref{prop_step1}.\vspace{0.5mm}
\item For any pair~$(0,\mu)$ such that $\mu=\sum_j\mu_j$ is purely atomic, we show how to glue the concentrating sequences of the previous step into an approximating sequence
\[
\{(\ue^{(\Sigma)}, |D_H\ue^{(\Sigma)}|^2{\rm d}\xi)\}_\eps.
\]
See Lemma~\ref{lemma_step2} below.\vspace{0.5mm}
\item {We present a   cut-off argument in order to finally construct the desired recovery sequence~$(\bar{u}_\eps, \bar{\mu}_\eps)$ for any pair~$(u,\mu)$ in the auxiliary space~$\Xc_N$.}
\end{itemize}
\begin{lemma}\label{prop_step1}
For any~$\xi_j\in\Omb$ there exists a sequence~$\{u^{(j)}_\eps\}\subset \Sc(\Om)$ such that
\begin{equation}\label{step1}
 { \Xc} \supseteq \big\{(u^{(j)}_\eps, |D_H\ue^{(j)}|^2{\rm d}\xi)\big\} \ {\mathpzc{t}}{\text{-converges to}}\ (0,\delta_{\xi_j}) \ \text{as } \eps \to 0.
\end{equation}
Moreover, the sequence~$\{u^{(j)}_\eps\}$ satisfies
\begin{equation}\label{step1_1}
\lim_{\eps\to 0}\int_\Om \snr{u^{(j)}_\eps}^{2^*-\eps} \, {\rm d}\xi = S^*,
\end{equation}
with~$S^*$ being the best Sobolev constant in~$\h$.
\end{lemma}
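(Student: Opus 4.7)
The plan is to concentrate the Jerison--Lee extremal at the base point $\xi_j$ after cutting it off to live in $\Omega$, and to choose the concentration parameter to decay with $\eps$ in a controlled way. Let $U$ be the maximizer in Theorem \ref{thm_optimal} rescaled so that
\[
\int_{\h}|D_H U|^2\,{\rm d}\xi=1,\qquad \int_{\h}|U|^{2^\ast}\,{\rm d}\xi=S^\ast.
\]
Since $\xi_j\in\Omb$, choose an interior approximation $\{\xi_j^\eps\}\subset\Om$ with $\xi_j^\eps\to\xi_j$ and a radius $\rho_\eps\to 0$ such that $B_{\rho_\eps}(\xi_j^\eps)\subset\Om$ (if $\xi_j$ is already interior, freeze $\xi_j^\eps\equiv\xi_j$ and $\rho_\eps$ small and constant). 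Pick $\lambda_\eps\to 0$ satisfying the two scale separations $\lambda_\eps/\rho_\eps\to 0$ and $\eps\,|\log\lambda_\eps|\to 0$; for instance one can take $\rho_\eps=\eps^{1/4}$, $\lambda_\eps=\eps^{1/2}$. Let $\eta_\eps\in C_c^\infty(B_{\rho_\eps}(\xi_j^\eps))$ be a cut-off with $\eta_\eps\equiv 1$ on $B_{\rho_\eps/2}(\xi_j^\eps)$ and $|D_H\eta_\eps|\le C/\rho_\eps$, and set
\[
U_\eps(\xi):=\lambda_\eps^{-\frac{Q-2}{2}}\,U\!\left(\delta_{1/\lambda_\eps}\!\big(\tau_{(\xi_j^\eps)^{-1}}(\xi)\big)\right),\qquad u^{(j)}_\eps:=\frac{\eta_\eps\,U_\eps}{\|D_H(\eta_\eps U_\eps)\|_{L^2(\Om)}}.
\]

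The first task is to control the cut-off error. By the invariance of the horizontal Dirichlet and critical Sobolev integrals under the Heisenberg translation-dilation group, $\int_{\h}|D_H U_\eps|^2=1$ and $\int_{\h}|U_\eps|^{2^\ast}=S^\ast$, while the pointwise decay $U(\xi)\sim|\xi|^{-(Q-2)}_\h$, $|D_HU(\xi)|\sim|\xi|^{-(Q-1)}_\h$ translates by the change of variables $\eta=\delta_{1/\lambda_\eps}(\tau_{(\xi_j^\eps)^{-1}}(\xi))$ into the quantitative estimate that the mass of $|U_\eps|^{2^\ast}$ and $|D_HU_\eps|^2$ sitting in the annulus $\{\rho_\eps/2\le |\tau_{(\xi_j^\eps)^{-1}}(\xi)|_\h\le\rho_\eps\}$ is $o(1)$ as $\lambda_\eps/\rho_\eps\to 0$. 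Combined with the commutator bound $\int|U_\eps D_H\eta_\eps|^2\lesssim\rho_\eps^{-2}\int_{B_{\rho_\eps}\setminus B_{\rho_\eps/2}}|U_\eps|^2$, which is again $o(1)$, this yields $\|D_H(\eta_\eps U_\eps)\|_{L^2}^2\to 1$ and $\|\eta_\eps U_\eps\|_{L^{2^\ast}}^{2^\ast}\to S^\ast$, so the normalization factor tends to $1$. The resulting $u^{(j)}_\eps$ lies in $\Sc(\Om)$ with $\|D_Hu^{(j)}_\eps\|_{L^2}^2=1$, so the pair $(u^{(j)}_\eps,|D_Hu^{(j)}_\eps|^2{\rm d}\xi)$ sits in $\Xc$. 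Since $u^{(j)}_\eps$ is supported in $B_{\rho_\eps}(\xi_j^\eps)$ with $\rho_\eps\to 0$ and $\xi_j^\eps\to\xi_j$, testing with $\varphi\in C^0(\Omb)$ gives $|D_Hu^{(j)}_\eps|^2{\rm d}\xi\tows\delta_{\xi_j}$; boundedness in $L^{2^\ast}$ and the same scaling computation $\int\phi\, u^{(j)}_\eps{\rm d}\xi=O(\lambda_\eps^{(Q-2)/2})$ for $\phi\in C_c^\infty(\Om)$ yield $u^{(j)}_\eps\rightharpoonup 0$ in $L^{2^\ast}(\Om)$, giving \eqref{step1}.

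The main obstacle, and the reason for the specific choice of $\lambda_\eps$, is the $L^{2^\ast-\eps}$ computation \eqref{step1_1}. The explicit scaling yields
\[
\int_{\h}|U_\eps|^{2^\ast-\eps}\,{\rm d}\xi=\lambda_\eps^{\frac{(Q-2)\eps}{2}}\int_{\h}|U|^{2^\ast-\eps}\,{\rm d}\eta,
\]
because $Q-\tfrac{(Q-2)(2^\ast-\eps)}{2}=\tfrac{(Q-2)\eps}{2}$. The decay $U(\eta)\sim|\eta|^{-(Q-2)}_\h$ ensures $U\in L^p(\h)$ for every $p>2^\ast/2$, hence $U\in L^{2^\ast-\eps}(\h)$ for all small $\eps$; dominated convergence then gives $\int|U|^{2^\ast-\eps}\to\int|U|^{2^\ast}=S^\ast$. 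The prefactor $\lambda_\eps^{(Q-2)\eps/2}=\exp\bigl(\tfrac{(Q-2)}{2}\eps\log\lambda_\eps\bigr)$ tends to $1$ \emph{precisely} because $\eps|\log\lambda_\eps|\to 0$, which is what pins down the admissible decay rate of $\lambda_\eps$; any choice $\lambda_\eps=\eps^\alpha$ works, but $\lambda_\eps=e^{-c/\eps}$ would not. The cut-off error in the $L^{2^\ast-\eps}$ norm is handled as above, being again $o(1)$ thanks to $\lambda_\eps/\rho_\eps\to 0$ and the integrability of $|U|^{2^\ast-\eps_0}$ at infinity. Dividing by the normalization factor (which tends to $1$), the conclusion $\int_\Om|u^{(j)}_\eps|^{2^\ast-\eps}\,{\rm d}\xi\to S^\ast$ follows.
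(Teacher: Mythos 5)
Your construction is correct and rests on the same basic idea as the paper's proof -- truncate and rescale the Jerison--Lee extremal around (an interior approximation of) $\xi_j$ and normalize the horizontal Dirichlet energy -- but the execution differs in three genuine ways. First, the paper locks the concentration scale to the subcritical parameter itself, taking $w_\eps=\eps^{-(Q-2)/2}U_{\eps,\xi_j}$, keeps the cut-off radius $\rr$ fixed, and only at the end sends $\rr\to0$ (and, for boundary points, moves the centre) via diagonal arguments; you decouple the two scales, let $\rho_\eps\to0$ and $\xi_j^\eps\to\xi_j$ along with $\eps$, and isolate the exact admissibility condition $\eps|\log\lambda_\eps|\to0$ together with $\lambda_\eps/\rho_\eps\to 0$, which removes both diagonal extractions and makes transparent why, say, $\lambda_\eps=e^{-c/\eps}$ would fail. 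Second, for the key limit \eqref{step1_1} the paper splits the integral over $\{w_\eps<1\}$ and $\{w_\eps\ge1\}$ and controls the ratio $\varphi^{2^\ast-\eps}w_\eps^{-\eps}$ through the bound $1\le w_\eps^{\eps}\le c_0^\eps\,\eps^{-\eps(Q-2)/2}\to1$; you instead use the clean scaling identity $\int_{\h}|U_\eps|^{2^\ast-\eps}=\lambda_\eps^{(Q-2)\eps/2}\int_{\h}|U|^{2^\ast-\eps}$ together with $U\in L^p(\h)$ for all $p>2^\ast/2$ and dominated convergence. Both hinge on the same fact ($\eps\log(1/\lambda_\eps)\to0$), but your version is arguably more transparent and quantitative, at the price of invoking the explicit decay $U\sim|\cdot|_{\h}^{-(Q-2)}$ slightly more heavily (e.g.\ in checking that $\int_{B_{\rho_\eps}}|U_\eps|\,{\rm d}\xi=O(\lambda_\eps^{(Q-2)/2}\rho_\eps^2)$ and that the annulus contributions to the gradient and critical norms vanish as $\lambda_\eps/\rho_\eps\to0$ -- all of which do check out, including the borderline case $Q=4$ where $\int_{B_{\rho_\eps}\setminus B_{\rho_\eps/2}}|U_\eps|^2\lesssim\lambda_\eps^2$). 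Third, your treatment of the boundary case is folded into the single construction rather than handled by a separate approximation of $\xi_j$ by interior points. No gaps; either route yields the lemma.
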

\begin{proof}
	We begin by assuming that~$\xi_j$ belongs to the interior of~$\Om$, and we modify the extremal functions~$U$ given by Theorem~\ref{thm_optimal}. Define
	\[
	w_\eps(\xi) := \eps^{-\frac{Q-2}{2}}U_{\eps,\xi_j}(\xi), \qquad \forall \xi \in \h.
	\]
	It follows by the very definition that, for any~$\rr >0$, the function~$w_\eps$ satisfies
	\begin{equation}\label{prop_step1 1}
		w_\eps \to 0 \quad \text{in} \ L^{2^*}(\h \setminus \overline{B_\rr(\xi_j)})
		\end{equation}
		and
			\begin{equation}\label{prop_step1 1bis}
			 \snr{D_H w_\eps}^2 \to 0 \quad \text{in} \ L^1(\h \setminus \overline{B_\rr(\xi_j)})\,
	\end{equation}
	whenever~$\eps \to 0$. Indeed, a change of variables yields
	\begin{eqnarray*}
		\int_{\h \setminus \overline{B_\rr(\xi_j)}} \snr{w_\eps}^{2^*} \, {\rm d}\xi &=&  \int_{\h \setminus \overline{B_\rr(\xi_j)}} \eps^{-Q}\big|U(\delta_{\frac{1}{\eps}}(\xi_j^{-1}\circ \xi))\big|^{2^*} \, {\rm d}\xi \\*
		&=& \int_{\h \setminus \overline{B_\rr(0)}} \eps^{-Q}\big|U\big(\delta_{\frac{1}{\eps}}(\eta)\big)\big|^{2^*}\, {\rm d}\eta \\*
		&=& \int_{\h \setminus \overline{B_{\frac{\rr}{\eps}}(0)}} \big|U(\eta')\big|^{2^*}\, {\rm d}\eta' \xrightarrow{\eps \rightarrow 0}0.
	\end{eqnarray*}
	In a similar way, the convergence~$\snr{D_H w_\eps}^2 \to 0$ in~$L^1(\h \setminus \overline{B_\rr(\xi_j)})$ follows by a change of variables and recalling the one~$\delta_\lambda$-homogeneity of the horizontal gradient.
	
	Fix~$\rr>0$ and take~$\varphi \in C^\infty_0(\h)$ such that~$\varphi \equiv 0$ on~$\h \setminus B_{2\rr}(\xi_j)$,~$\varphi \equiv 1$ on~$B_\rr(\xi_j)$ and~$0 \leq \varphi \leq 1$. We use the cut-off function~$\varphi$ to localize~$w_\eps$ in a smaller neighborhood of~$\xi_j$. In particular, we set	\[
	\tilde{u}_\eps^{(j)}(\xi):= \varphi(\xi)w_\eps(\xi)\,,
	\]
	and we claim that as~$\eps \rightarrow 0$ the following convergence results do hold,
	\begin{equation}\label{prop_step1 2}
		\tilde{u}_\eps^{(j)} \rightharpoonup 0 \quad \text{in } L^{2^*}(\Om)\,,
		\end{equation}
			\begin{equation}\label{prop_step1 2bis}
			 \|D_H \tilde{u}^{(j)}_\eps\|_{L^2(\h)} \to 1\,,
		\end{equation}
and		
	\begin{equation}\label{prop_step1 2ter}
		\int_{\Om}\snr{\tilde{u}_\eps^{(j)}}^{2^* -\eps} \, {\rm d}\xi \rightarrow S^*.
	\end{equation}
	The first convergence in~\eqref{prop_step1 2} plainly follows from~$\eqref{prop_step1 1}$. 
	
	In order to prove the claim in~$\eqref{prop_step1 2bis}$, it suffices to carefully estimate  the horizontal Dirichlet energy; we have
	\begin{eqnarray*}
		\int_{\h} \snr{D_H \tilde{u}^{(j)}_\eps}^2 \, {\rm d}\xi
		&=& \int_{B_{2\rr}(\xi_j) \setminus B_\rr(\xi_j)} \snr{D_H \tilde{u}^{(j)}_\eps}^2 \, {\rm d}\xi + \int_{B_\rr(\xi_j)} \snr{D_H w_\eps}^2 \, {\rm d}\xi \nonumber\\*
		&=& \int_{B_{2\rr}(\xi_j) \setminus B_\rr(\xi_j)} \snr{D_H \tilde{u}^{(j)}_\eps}^2 \, {\rm d}\xi + \int_{B_\frac{\rr}{\eps}(0)} \snr{D_H U(\xi)}^2 \, {\rm d}\xi\,,
	\end{eqnarray*}
	where the last identity plainly follows in view of the~$\delta_\lambda$-homogeneity of the horizontal gradient. Hence,~$\eqref{prop_step1 2bis}$ is satisfied once we proved that the first integral in the preceding identity  goes to~$0$ whenever~$\eps \to 0$. We have form~\eqref{prop_step1 1bis}
	\begin{eqnarray*}
		&& \int_{B_{2\rr}(\xi_j) \setminus B_\rr(\xi_j)} \snr{D_H \tilde{u}^{(j)}_\eps}^2 \, {\rm d}\xi  \\*[1ex]
		&&\quad \qquad \leq \ 2\| \varphi\|_{L^\infty}^2\int_{B_{2\rr}(\xi_j) \setminus B_\rr(\xi_j)} \snr{D_H w_\eps}^2 \, {\rm d}\xi + \, 2\|D_H \varphi\|_{L^\infty}^2\int_{B_{2\rr}(\xi_j) \setminus B_\rr(\xi_j)} \snr{w_\eps}^2 \, {\rm d}\xi \\*[1ex]
		&& \quad \qquad\leq \  c \int_{\h \setminus B_\rr(\xi_j)} \snr{D_H w_\eps}^2 \, {\rm d}\xi+c \eps^2 \int_{\h \setminus B_\frac{\rr}{\eps}(0)} \snr{U}^2 \, {\rm d}\xi \ \xrightarrow{\eps \to 0} 0.
	\end{eqnarray*}

    As for the claim in~$\eqref{prop_step1 2ter}$, it is convenient to split the integral as follows,
	\begin{eqnarray}\label{i12}
	\int_{\Om}\snr{\tilde{u}_\eps^{(j)}}^{2^* -\eps} \, {\rm d}\xi
	& =&  \int_{\Om \cap \{w_\eps <1\}}\snr{\tilde{u}_\eps^{(j)}}^{2^* -\eps} \, {\rm d}\xi + \int_{\Om \cap \{w_\eps \geq 1\}}\snr{\tilde{u}_\eps^{(j)}}^{2^* -\eps} \, {\rm d}\xi \notag \\*[1ex]
	& =: & {I}_{1,\eps} + {I}_{2,\eps}.
	\end{eqnarray}
	Firstly, note that~$\snr{\varphi(\xi)w_\eps(\xi)}^{2^* -\eps} <1$ on~$\Om \cap \{w_\eps <1\}$ uniformly with respect to~$\eps$, and~$\varphi(\xi)w_\eps(\xi) = {\rm o}(\eps^{Q-2})$
	as  $\eps\to 0$, since in $\Om \cap \{w_\eps <1\}$ it is away from the concentration point.
	This yields
	\begin{equation}\label{i1limit}
	{I}_{1,\eps}  \rightarrow 0 \ \text{as}\ \eps \to 0\,.
	\end{equation}

	Now,
	recall that for~$\eps $ sufficiently small,~$\Om \cap \{w_\eps \geq 1\} \subset B_\rr (\xi_j)$ 	and
	\begin{equation}\label{eq001}
	\frac{\varphi^{2^*-\eps}}{w_\eps^\eps} - 1 = \frac{1}{w_\eps^\eps} - 1\,.
	\end{equation}
	Also, on~$\Om \cap \{w_\eps \geq 1\}$ we have
	\begin{equation}\label{eq002}
	1 \leq w_\eps^\eps \leq (\max w_\eps)^\eps \leq c_0^\eps \eps^{-\eps\frac{Q-2}{2}} \xrightarrow{\eps \to 0}1\,.
	\end{equation}
We are finally ready to estimate the integral~${I}_{2,\eps}$ in~\eqref{i12}; we have
	\begin{eqnarray}\label{i2limit}
	{I}_{2,\eps}
	\ & \leq &
	 \left\| \frac{\varphi^{2^*-\eps}}{w_\eps^\eps} - 1 \right\|_{L^\infty(\{w_\eps \geq 1\})}\int_{\Om \cap \{w_\eps \geq 1\}}\snr{w_\eps}^{2^*} \, {\rm d}\xi \notag\\*[0.5ex]
	 &&  + \int_{\Om \cap \{w_\eps \geq 1\}}\snr{w_\eps}^{2^*} \, {\rm d}\xi \xrightarrow{\eps\to0} S^*,
	\end{eqnarray}
	where we used~\eqref{eq001}-\eqref{eq002} and also the definition of~$w_\eps$ in view of the optimality of the extremal functions~$U$ in~Theorem~\ref{thm_optimal}.
	\vspace{1mm}
	
	Passing to the limit as~$\rr \to 0$, by a diagonal argument,
	for any~$\eps>0$, we now set~$u^{(j)}_\eps :=  \tilde{u}^{(j)}_\eps /\|D_H \tilde{u}^{(j)}_\eps\|_{L^2(\h)}$. Clearly, it holds that the pair~$(u^{(j)}_\eps, \snr{D_Hu^{(j)}_\eps}^2 {\rm d}\xi)$ belongs to the space~${\Xc}$. Indeed,
	\[
	(u^{(j)}_\eps, \snr{D_Hu^{(j)}_\eps}^2 {\rm d}\xi) \in \Sc(\Om) \times \mea,
    \quad \text{and} \quad
	\int_{\overline{\Om}} \snr{D_Hu^{(j)}_\eps}^2 {\rm d}\xi 	  \leq 1.
	\]
	All in all, combining \eqref{prop_step1 2}-\eqref{prop_step1 2ter}
	and \eqref{i12} with~\eqref{i1limit} and~\eqref{i2limit},
the desired $\Gamma^+$-liminf result does follow in the case when~$\xi_j$ is an interior point.
	\vspace{1mm}
	
	The case when~$\xi_j$ belongs to~$\partial \Om$ can be recovered by considering an approximating sequence of interior points~$\xi_j^k$ converging to~$\xi_j$ and their corresponding optimal functions.
\end{proof}

We are now ready to construct a suitable recovery sequence for pairs~$(0,\mu)$ having a purely atomic measure component.
\begin{lemma}\label{lemma_step2}
For any finite set of distinct points $\{\xi_1, \xi_2, ..., \xi_N\}\subset \Omb$ and for any set of positive numbers~$\{\mu_1, \mu_2, ..., \mu_N\}\subseteq\mathbb{R}$ such that $\sum_j\mu_j<1$, there exists a sequence $\{u_\eps^{(\Sigma)}\}\subset \Sc(\Om)$ such that
{\begin{equation}\label{recovery_step2}
  \left\{\left(u^{(\Sigma)}_\eps, |D_H u_\eps^{(\Sigma)}|^2{\rm d}\xi\right)\right \} \subset { \Xc} \mbox{ and } \left(u^{(\Sigma)}_\eps, |D_H u_\eps^{(\Sigma)}|^2{\rm d}\xi\right)\towt \left(0, \sum_{j=1}^{N}\mu_j\delta_{\xi_j}\right)
\end{equation}}
as $\eps\to 0$.
Moreover, the sequence~$\{u^{(\Sigma)}_\eps\}$ satisfies
\begin{equation}\label{liminf_step2}
\lim_{\eps\to 0}\int_\Om \snr{u^{(\Sigma)}_\eps}^{2^*-\eps} \, {\rm d}\xi = S^*\sum_{j=1}^N \mu_j^{\frac{2^\ast}{2}}\,.
\end{equation}

\end{lemma}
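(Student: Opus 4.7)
\bigskip

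\noindent\textbf{Proof plan for Lemma \ref{lemma_step2}.} The natural strategy is to build the recovery sequence $u^{(\Sigma)}_\eps$ as a suitable superposition of the single-bump concentrating sequences $u^{(j)}_\eps$ produced by Lemma~\ref{prop_step1}, one at each point $\xi_j$, rescaled so that the $j$-th bump carries exactly the prescribed mass $\mu_j$. More precisely, I would set
\[
u^{(\Sigma)}_\eps(\xi) \; := \; \sum_{j=1}^{N} \mu_j^{1/2}\, u^{(j)}_\eps(\xi),
\]
after having arranged that the supports of the different bumps are pairwise disjoint. Since the $N$ points $\xi_1,\dots,\xi_N$ are distinct, we can fix $\rho^{\ast}>0$ such that the Kor\'anyi balls $B_{2\rho^{\ast}}(\xi_j)$ are pairwise disjoint; this is the value of the radius $\rho$ that we use inside the cut-off construction of Lemma~\ref{prop_step1} to produce each $u^{(j)}_\eps$. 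In this way $u^{(j)}_\eps$ is supported in $B_{2\rho^{\ast}}(\xi_j)$ and the $N$ summands above have mutually disjoint supports.

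Once disjointness of supports is secured, each of the three convergences in~\eqref{recovery_step2}--\eqref{liminf_step2} reduces to an additive computation over~$j$. First, for the Dirichlet constraint we exploit that Lemma~\ref{prop_step1} gives $\|D_H u^{(j)}_\eps\|_{L^2(\h)}=1$, so disjoint supports yield
\[
\int_{\Omega}|D_H u^{(\Sigma)}_\eps|^{2}\,{\rm d}\xi \;=\; \sum_{j=1}^{N}\mu_j\,\|D_H u^{(j)}_\eps\|_{L^2(\h)}^{2} \;=\; \sum_{j=1}^{N}\mu_j \;<\; 1,
\]
which shows that $(u^{(\Sigma)}_\eps,|D_H u^{(\Sigma)}_\eps|^{2}{\rm d}\xi)$ lies in~$\Xc$. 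The weak convergence $u^{(\Sigma)}_\eps \rightharpoonup 0$ in $L^{2^\ast}(\Omega)$ follows since each $u^{(j)}_\eps\rightharpoonup 0$ in $L^{2^\ast}(\Omega)$ by~\eqref{step1} of Lemma~\ref{prop_step1}, and a finite linear combination of weakly null sequences is weakly null. For the weak-$\ast$ convergence of measures, I would test against $\phi\in C^0(\Omb)$: by the disjoint supports and~\eqref{step1},
\[
\int_{\Omb}\phi\,|D_H u^{(\Sigma)}_\eps|^{2}{\rm d}\xi \;=\; \sum_{j=1}^{N}\mu_j\!\int_{\Omb}\phi\,|D_H u^{(j)}_\eps|^{2}{\rm d}\xi \;\longrightarrow\; \sum_{j=1}^{N}\mu_j\,\phi(\xi_j),
\]
which is exactly testing $\sum_j \mu_j\delta_{\xi_j}$ against $\phi$.

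Finally, for the energy limit~\eqref{liminf_step2}, disjointness of supports gives the pointwise identity
\[
|u^{(\Sigma)}_\eps|^{2^\ast-\eps} \;=\; \sum_{j=1}^{N}\mu_j^{\frac{2^\ast-\eps}{2}}\,|u^{(j)}_\eps|^{2^\ast-\eps},
\]
so that integrating and using the single-bump asymptotics~\eqref{step1_1} of Lemma~\ref{prop_step1}, together with $\mu_j^{(2^\ast-\eps)/2}\to \mu_j^{2^\ast/2}$ as $\eps\to 0$, yields
\[
\lim_{\eps\to 0}\int_{\Omega}|u^{(\Sigma)}_\eps|^{2^\ast-\eps}\,{\rm d}\xi \;=\; S^{\ast}\sum_{j=1}^{N}\mu_j^{\frac{2^\ast}{2}},
\]
which is~\eqref{liminf_step2}.

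The only genuinely delicate point I anticipate is the handling of the case in which some $\xi_j$ lies on~$\partial\Omega$: in that situation the cut-off construction of Lemma~\ref{prop_step1} must already have been adjusted by an interior approximation $\xi_j^k\to \xi_j$, and I would need to propagate that adjustment through the finite sum via a standard diagonal extraction so that disjointness of supports is preserved while the concentration measure still converges to $\mu_j\delta_{\xi_j}$. All the other steps are additive and reduce transparently to Lemma~\ref{prop_step1} applied bump by bump.
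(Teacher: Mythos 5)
Your proposal is correct and follows essentially the same route as the paper: both take $u^{(\Sigma)}_\eps=\sum_j\sqrt{\mu_j}\,u^{(j)}_\eps$ with the single-bump sequences of Lemma~\ref{prop_step1} supported in pairwise disjoint balls around the $\xi_j$, so that the Dirichlet energy, the measure convergence, and the $L^{2^\ast-\eps}$ energy all split additively and the constraint $\mu(\Omb)\le 1$ follows from $\sum_j\mu_j<1$. The only cosmetic difference is that the paper invokes the vanishing of the cross terms $\int D_Hu^{(i)}_\eps\cdot D_Hu^{(j)}_\eps\,{\rm d}\xi$ explicitly and concludes $\int|D_Hu^{(\Sigma)}_\eps|^2\,{\rm d}\xi\le 1$ only for $\eps$ small, whereas you use the exact normalization $\|D_Hu^{(j)}_\eps\|_{L^2}=1$; both are consistent with Lemma~\ref{prop_step1}.
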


\begin{proof} Put $B_j:=B_{r_j}(\xi_j)\cap\Omega$ for any $j=1,2,\dots,N$, for suitable radii $r_j$ and $r_i$ such that
${\rm dist}(B_j,B_i)>0$. By Lemma~\ref{prop_step1} there exists
a sequence~$\{u^{(j)}_\eps\}\subset \Sc(\Om)$ such that
$(u^{(j)}_\eps, |D_H\ue^{(j)}|^2{\rm d}\xi) \towt (0,\delta_{\xi_j}) \ \text{as } \eps \to 0.$
Moreover, from the proof of Lemma~\ref{prop_step1} we immediately deduce  that ${\rm spt \, \ue^{(j)}}\subset B_j$,  ${\rm dist}\big({\rm supp}\, \ue^{(j)}, \,\{\xi_j\}\big)\to 0$ as $\eps \to 0$, and
\[\int_{\Om} \snr{u^{(j)}_\eps}^{2^*-\eps} \,{\rm d}\xi\xrightarrow{\eps \to 0} S^*\quad \text{for }j=1,2,\dots,N.\]
Let us set
$u^{(\Sigma)}_\eps:=\sum_{j=1}^{N}\sqrt{\mu_j}u^{(j)}_\eps$.
A simple computation gives
\begin{eqnarray}\label{lemma_step2 1}
\int_{\h}\snr{D_Hu^{(\Sigma)}_\eps}^2 {\rm d}\xi &=&\sum_{j=1}^{N}{\mu_j}\int_{\h}\snr{D_Hu^{(j)}_\eps}^2 {\rm d}\xi +2\sum_{\substack{i,j=1 \notag\\*[0.5ex]
i<j}}^{N}\sqrt{\mu_i\mu_j}\int_{\h} D_Hu^{(i)}_\eps  D_Hu^{(j)}_\eps \, {\rm d}\xi \\
&=& \sum_{j=1}^{N}{\mu_j}\int_{\h}\snr{D_Hu^{(j)}_\eps}^2 {\rm d}\xi,
\end{eqnarray}
since
$
\int D_Hu^{(i)}_\eps  D_Hu^{(j)}_\eps \, {\rm d}\xi= 0 \text{ for } i\neq j,
$
being $\ue^{(i)}$ and $\ue^{(j)}$ with disjoint support whenever $i\neq j$.
Combining~\eqref{lemma_step2 1} with the convergence in~\eqref{step1}, we deduce that
\[
|D_Hu^{(j)}_\eps|^2{\rm d}\xi \tows \sum_{j=1}^{N}\mu_j\delta_{\xi_j} \text{ in } \mea.
\]
Finally, since $\sum_{j} \mu_j<1$, we deduce by~\eqref{lemma_step2 1} that $\int_{\h}\snr{D_Hu^{(\Sigma)}_\eps}^2 {\rm d}\xi\leq 1$ for $\eps$ small enough. Consequently, $\{(u^{(\Sigma)}_\eps, |D_H\ue^{(\Sigma)}|^2{\rm d}\xi)\}\subset  { \Xc}$, as required. This completes the proof of~\eqref{recovery_step2}. The last claim given in~\eqref{liminf_step2} is a plain consequence of~\eqref{step1_1}; we have
\[
\int_\Om \snr{u^{(\Sigma)}_\eps}^{2^*-\eps} \, {\rm d}\xi \vspace{-4mm}
\ =\ \sum_{j=1}^N \mu_j^\frac{2^*-\eps}{2}\int_\Om \snr{u^{(j)}_\eps}^{2^*-\eps} \, {\rm d}\xi \ \xrightarrow{\eps \to 0}\ \sum_{j=1}^N \mu_j^\frac{2^*}{2}S^*\,.
\]
\end{proof}

\vspace{2mm}
\begin{proof}[\bf Proof of~{Proposition~{\ref{prop_gammaliminf}}}]
Fix~$(u,\mu) \in \Xc_N$; i.~\!e.,~$u \in L^{2^*}(\Om)$ and
	\[
	\mu = \snr{D_H u}^2 {\rm d}\xi + \tilde{\mu} +\sum_{j=1}^N \mu_j \delta_{\xi_j} \in \mea,
	\]
	such that~$\mu (\overline{\Om}) <1$, and consider radius~$\rr$ converging to~$0$ such that the balls~$B_{2\rr}(\xi_j) \cap B_{2\rr}(\xi_i) = \emptyset$, for any~$i,j \in \{1,\dots,N\}$ with~$i\neq j$. Denote with
	\[
	B_{\rr,j}:= B_{\rr}(\xi_j) \cap \overline{\Om} \quad \text{and} \quad
	\varphi_\rr(\xi) = 1 - \sum_{j=1}^N \psi \big(\delta_{\frac{1}{\rr}}(\tau_{\xi_j}^{-1}(\xi)) \big),
	\]
	where~$\psi \in C^\infty_0(B_2)$,~$\psi \equiv 1$  on~$\overline{B_1}$,~$ 0 \leq \psi \leq 1$, with~$B_2 \equiv B_2(0)$. Note that~$\varphi_\rr \equiv 0$ in~$\overline{B_{\rr,j}}$,  for~$j=1,\dots,N$, and~$\varphi_\rr\equiv 1$ in~$\overline{\Om} \smallsetminus \bigcup_{j=1}^N B_{2\rr,j}$.
	
	Now denote with~$u^{(\Sigma)}_\eps$ the sequence given by Lemma~\ref{lemma_step2} and define
	\[
	\bar{u}_\eps:= u\varphi_\rr + u^{(\Sigma)}_\eps, \qquad \bar{\mu}_{\eps}:= \tilde{\mu} + \snr{D_H(u\varphi_\rr +u^{(\Sigma)}_\eps)}^2{\rm d}\xi.
	\]
	Taking first the limit as $\eps \to 0$ and then for $\rr \to 0$, by a diagonal argument, we prove that the above sequence is a recovery sequence for $(u,\mu)$.
	
	First we prove that $(\bar{u}_\eps,\bar{\mu}_\eps)$ for $\eps$ and $\rr$ sufficiently small belongs to the space  {$\Xc$}. By construction we are only left to prove that
	\begin{equation}\label{prop_gammaliminf_1}
		\bar{\mu}_\eps (\overline{\Om}) \,{\leq}\, 1.
	\end{equation}
	Indeed, for any $\eps,\rr >0$ we have that
	\begin{eqnarray*}
		\bar{\mu}_\eps (\overline{\Om}) &=&\tilde{\mu}(\overline{\Om}) + \int_{\Om}\snr{D_H (u\varphi_\rr+u^{(\Sigma)}_\eps)}^2 {\rm d}\xi\\
		&=& \tilde{\mu}(\overline{\Om}) + \int_{\Om}\snr{D_H (u\varphi_\rr)}^2 {\rm d}\xi  + \int_{\Om}\snr{D_H u^{(\Sigma)}_\eps}^2 {\rm d}\xi,
	\end{eqnarray*}
	where we have used the fact that $\int_\Om D_H u\varphi_\rr\cdot D_H u^{(\Sigma)}_\eps\, {\rm d}\xi=0$, for $\eps$ sufficiently small. Moreover,  by Lemma~\ref{lemma_step2} we have that
	\begin{equation}\label{prop_gammaliminf 4}
			\lim_{\eps \to 0} \int_{\Om} \snr{D_H u^{(\Sigma)}_\eps}^2 \, {\rm d}\xi = \sum_{j=1}^N \mu_j.
	\end{equation}
	On the other hand, we have that
	\begin{eqnarray}\label{prop_gammaliminf_3}
		\int_{\Om}\snr{D_H( u\varphi_\rr)}^2 \, {\rm d}\xi &=& \int_{\Om\setminus \bigcup_j B_{2\rr,j}}\snr{D_H u}^2 \, {\rm d}\xi
		 + \sum_{j=1}^N\int_{ B_{2\rr, j}\smallsetminus B_{\rr,j}}\snr{D_H (u \varphi_\rr)}^2 \, {\rm d}\xi \nonumber\\
		& \rightarrow& \int_{\overline{\Om}}\snr{D_H u}^2 \, {\rm d}\xi \ \text{as} \ \rr \to 0\,,
	\end{eqnarray}
since for any~$j \in \{1,\dots,N\}$ the integral~$\int_{ B_{2\rr, j}\smallsetminus B_{\rr,j}}\snr{D_H (u \varphi_\rr)}^2 \, {\rm d}\xi$ tends to~$0$ with respect to~$\rr$.
	\\ Indeed, since~$\psi \in C^\infty_0(B_2)$, we have
    \begin{eqnarray*}
&&		\int_{ B_{2\rr, j}\smallsetminus B_{\rr,j}}\snr{D_H (u \varphi_\rr)}^2 \, {\rm d}\xi
		\\
		&&\qquad \qquad \qquad \quad \le
		2\|D_H u\|^2_{L^2(B_{2\rr, j}\smallsetminus B_{\rr,j})} + 2\rr^\frac{Q-2}{2} \|u\|_{L^2(\Om)}\|D_H \psi\|_{L^2( B_{2}\smallsetminus B_{1})}\,,
\end{eqnarray*}
	which goes to~$0$ as~$\rr \to 0$. It is now sufficient to combine~\eqref{prop_gammaliminf 4} with~\eqref{prop_gammaliminf_3} to get that~\eqref{prop_gammaliminf_1} holds true.
	\vspace{1mm}
	
	We show now that~$\{(\bar{u}_\eps,\bar{\mu}_\eps)\}$ ${\mathpzc{t}}$-converges to $(u,\mu)$; i.~\!e.,
	\begin{equation}\label{prop_gammaliminf_2}
		\bar{u}_\eps \rightharpoonup u \quad \text{in}~L^{2^*}(\Om) \quad \text{and} \quad \bar{\mu}_\eps \tows \mu \quad \text{in}~\mea.
	\end{equation}
	Note that~$\eqref{prop_gammaliminf_2}_1$ follows from the fact that~$u^{(\Sigma)}_\eps \rightharpoonup 0$ in~$L^{2^*}(\Om)$, as shown in Lemma~\ref{lemma_step2}, and~$u\varphi_\rr$ converges (strongly) to~$u$ as~$\rr \to 0$. Indeed,
	on~$\Om$ we have that~$\snr{u\varphi_\rr-u}^{2^*}=\snr{1-\varphi_\rr}^{2^*}\snr{u}^{2^*} \leq \snr{u}^{2^*}$, thus by Lebesgue's Dominated Convergence Theorem we obtain that~$u\varphi_\rr \to u$ in~$L^{2^*}(\Om)$.
	
	The second convergence in~$\eqref{prop_gammaliminf_2}_2$ follows also from Lemma~\ref{lemma_step2}. Indeed, once fixed~$\phi \in C^0_0(\overline{\Om})$, we get that
	\begin{eqnarray*}
		\lim_{\rr \to 0} \lim_{\eps \to 0} \int_{\overline{\Om}}\phi \, {\rm d}\bar{\mu}_\eps &=& \int_{\overline{\Om}}\phi \, {\rm d}\tilde{\mu} + \lim_{\rr \to 0} \lim_{\eps \to 0} \int_{\Om}\phi \, \snr{D_H(u\varphi_\rr+u^{(\Sigma)}_\eps)}^2{\rm d}\xi\\[1ex]
		&=&  \int_{\overline{\Om}}\phi \, {\rm d}\tilde{\mu} + \lim_{\rr \to 0} \int_{\Om}\phi \, \snr{D_H(u\varphi_\rr)}^2{\rm d}\xi\\
		&& +\lim_{\eps \to 0} \int_{\Om}\phi \, \snr{D_Hu^{(\Sigma)}_\eps}^2{\rm d}\xi\\[1ex]
		&=&  \int_{\overline{\Om}}\phi \, {\rm d}\tilde{\mu} +  \int_{\Om}\phi \, \snr{D_Hu}^2{\rm d}\xi+\sum_{j=1}^N\int_{\overline{\Om}}\phi \mu_j \, {\rm d}\delta_{\xi_j}\\[1ex]
		&=& \int_{\overline{\Om}} \phi \, {\rm d}\mu,
	\end{eqnarray*}
where we also used~\eqref{prop_gammaliminf 4} and~\eqref{prop_gammaliminf_3}.

\vspace{1mm}

It remains to prove the liminf inequality in~\eqref{liminfN}. Firstly, note that the integral in the definition of~$\Fep(\bar{u}_\eps,\bar{\mu}_\eps)$ can be splitted as follows
\[
\Fep(\bar{u}_\eps,\bar{\mu}_\eps) = \int_\Om \snr{u\varphi_\rr}^{2^*-\eps} \, {\rm d}\xi + \int_\Om \snr{u^{(\Sigma)}_\eps}^{2^*-\eps} \, {\rm d}\xi\,,
\]
Moreover, as in the proof of~$\eqref{prop_gammaliminf_2}$, we can deduce that
\[
\lim_{\rr \to 0}\lim_{\eps\to 0}\int_\Om \snr{u\varphi_\rr}^{2^* -\eps} \, {\rm d}\xi = \int_\Om \snr{u}^{2^*} \, {\rm d}\xi.
\]
By combining all the previous results, up to a diagonal argument, we finally get
\[
\liminf_{\eps\to 0}\Fep(\bar{u}_\eps,\bar{\mu}_\eps)= \int_\Om \snr{u}^{2^*} \, {\rm d}\xi + S^* \sum_{j=1}^N \mu_j^\frac{2^*}{2} \equiv \Fc(u,\mu)\,
\]
as desired.
\end{proof}
\vspace{2mm}

\vspace{2mm}
\section{Proof of the concentration result}
\label{sec_concentration}
In this section we will prove~Theorem~\ref{cor_concentration} showing that, due to the $\Gamma^+$-convergence result,  the   maximizers~$\{\ue\}$ for the variational problem (\ref{problema}) concentrate energy at one point $\xi_{\rm o} \in \Omb$ when $\eps$ goes to zero.

\vspace{2mm}

Firstly, notice that, since the embeddings  $\Sc(\Om) \hookrightarrow L^{2^{\ast}-\eps}(\Om)$ are compact,
the functionals $\Fep$ as extended to $\Xc$ by~\eqref{def_fue1} are continuous.
As a consequence, we will have that the $\Gamma^{+}$-convergence of functionals in this space implies the convergence of maximizers~$\{(\ue,\mue)\}$ of $\Fep$ to the maxima of $\Fc$.  This will be the final step in the alternative proof of the concentration result presented at the end this section.
The first step is in Lemma~\ref{pro_argmax} below where we show that there are no further maximizers in the space~$X$ other then the pairs of the form $(\ue,|D_Hu_\eps|^2{\rm d}\xi)$.
\begin{lemma}\label{pro_argmax}
	For any $\eps>0$, let $(\bar{u}_\eps, \bar{\mu}_\eps)\in \Xc$ be such that
	\[
	\dys \sup_{(u,\mu)\in \Xc} \Fep(u,\mu) = \Fep(\bar{u}_\eps, \bar{\mu}_\eps).
	\]
	Then
	$
	\bar{\mu}_\eps=|D_H\bar{u}_\eps|^2{\rm d}\xi.
	$
\end{lemma}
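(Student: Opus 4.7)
The key structural observation to exploit is that $\Fep(u,\mu) = \int_\Om |u|^{2^*-\eps}{\rm d}\xi$ does not actually depend on the measure component $\mu$: the pair $\mu$ enters only through the definition of $\Xc$, where it serves as an effective cap on the horizontal Dirichlet energy of $u$ via the two constraints $\mu \geq |D_H u|^2{\rm d}\xi$ and $\mu(\Omb)\leq 1$. Consequently, the assertion should follow from a simple homogeneity/rescaling argument combined with a mass-pinching argument for the measure.

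The first step will be to prove that a maximizer necessarily saturates the Dirichlet-energy budget, that is, $\int_\Om |D_H \bar u_\eps|^2{\rm d}\xi = 1$. Suppose for contradiction that $a := \int_\Om |D_H \bar u_\eps|^2{\rm d}\xi < 1$; note $a>0$, since otherwise $\bar u_\eps \equiv 0$ would give $\Fep(\bar u_\eps,\bar\mu_\eps)=0$, contradicting the fact that $\sup_{\Xc}\Fep>0$ (as witnessed, e.g., by any nontrivial $\varphi\in C^\infty_0(\Om)$ normalized so that $\|D_H\varphi\|_{L^2}^2\le 1$, paired with $\mu=|D_H\varphi|^2{\rm d}\xi$). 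Set $\lambda := a^{-1/2}>1$ and consider the competitor
\[
(v,\mu_v) \ := \ \bigl(\lambda\bar u_\eps,\ \lambda^2|D_H\bar u_\eps|^2{\rm d}\xi\bigr).
\]
Then $(v,\mu_v)\in\Xc$, since $\mu_v=|D_H v|^2{\rm d}\xi$ and $\mu_v(\Omb)=\lambda^2 a=1$. On the other hand,
\[
\Fep(v,\mu_v) \ = \ \lambda^{2^*-\eps}\,\Fep(\bar u_\eps,\bar\mu_\eps) \ > \ \Fep(\bar u_\eps,\bar\mu_\eps),
\]
contradicting maximality.

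Once saturation is established, the conclusion drops out from the chain of inequalities
\[
1 \ = \ \int_\Om |D_H\bar u_\eps|^2{\rm d}\xi \ \leq \ \bar\mu_\eps(\Omb) \ \leq \ 1,
\]
the left inequality coming from $\bar\mu_\eps\geq |D_H\bar u_\eps|^2{\rm d}\xi$ and the right one from the definition of $\Xc$. Thus the nonnegative Radon measure $\bar\mu_\eps - |D_H\bar u_\eps|^2{\rm d}\xi$ has zero total mass on $\Omb$, hence vanishes identically. I do not foresee any genuine obstacle here: the lemma is essentially a bookkeeping statement reflecting the principle that, since $\Fep$ does not feel $\mu$, any additional mass carried by $\mu$ beyond $|D_H u|^2{\rm d}\xi$ can only be wasteful.
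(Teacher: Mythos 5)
Your proposal is correct and follows essentially the same route as the paper: a rescaling of $\bar u_\eps$ by $\lambda=\big(\int_\Om|D_H\bar u_\eps|^2{\rm d}\xi\big)^{-1/2}$ (the paper's $b^{1/2}$) to show the Dirichlet budget must be saturated, followed by the mass-pinching $1=\int_\Om|D_H\bar u_\eps|^2{\rm d}\xi\leq\bar\mu_\eps(\Omb)\leq1$. Your version is, if anything, slightly more careful in justifying $\bar u_\eps\neq0$ via the positivity of the supremum.
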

\vspace{1mm}
\begin{proof}
	The present proof is a plain generalization of the one presented in the {\it fractional} (Euclidean) framework in~\cite{PPS15}, since it is based on the intrinsic scaling properties of the involved functional, and the underlying geometry does not interfere.
	
	Firstly, notice that the supremum is attained at some $(\bar{u}_{\eps},\mbu)$ because $\Xc$ is sequentially compact and $\Fep$ is sequentially continuous (due to the compact embedding~$\Sc(\Om) \hookrightarrow L^{2^\ast-\eps}(\Om)$).
	Clearly, it is not restrictive to assume that $\dys \bar{\mu}_\eps(\Omb)=1$.
	Indeed, if one has $\bar{\mu}_\eps(\Omb)<1$, then it suffices to consider the pair $(\bar{u}_\eps, \bar{\mu}_\eps/\bar{\mu}_\eps(\Omb))$ which still belongs to the space $\Xc$, it satisfies $(\bar{\mu}_\eps/\bar{\mu}_\eps(\Omb))(\Omb)=1$, and
	\[
	\Fep(\bar{u}_\eps, \bar{\mu}_\eps/\bar{\mu}_\eps(\Omb))=\Fep(\bar{u}_\eps, \bar{\mu}_\eps)=\max_{(u,\mu)\in \Xc} \Fep(u,\mu).
	\]
	Since $\bar{u}_\eps\neq0$, by the very definition of the space~$\Xc$, it follows $0<\|D_H\bar{u}_\eps\|_{L^2}\leq1$.
	Thus, by setting
	\begin{equation}\label{eq_recall}
		b\, =\, \dys b(\eps):=\, \frac{1}{\|\bar{u}_\eps\|^2_{\Sc(\Om)}}\, \geq \, 1,
	\end{equation}
	one can consider a new  pair $(\tilde{u}_\eps,\tilde{\mu}_\eps)$ given as follows,
	\[
	\dys \tilde{u}_\eps:={b^{\frac{1}{2}}}\bar{u}_\eps \ \ \text{and} \ \ \tilde{\mu}_\eps:=b|D_H\bar{u}_\eps|^2{\rm d}\xi.
	\]
	Notice that $(\tilde{u}_\eps,\tilde{\mu}_\eps)$ belongs to the space~$\Xc$ and it satisfies
	\begin{eqnarray}\label{eq_maxima}
		\Fep(\tilde{u}_\eps,\tilde{\mu}_\eps) \! & = & \! b^{\frac{{2^{\ast}-\eps}}{2}}\Fep(\bar{u}_\eps,\bar{\mu}_\eps) \, = \, b^{\frac{{2^{\ast}-\eps}}{2}}\max_{(u,\mu)\in \Xc} \Fep(u,\mu).
	\end{eqnarray}
	So that \eqref{eq_recall} and \eqref{eq_maxima} yield $b=1$,
	$(\bar{u}_\eps, |D_H\bar{u}_\eps|^2{\rm d}\xi)$ is a maximizer, and $\|\bar{u}_\eps\|_{\Sc(\Om)}=1$.
	Since $1=\int |D_H\bar{u}_\eps|^2{\rm d}\xi \leq \mbu(\Omb)=1$, one has $\mbu=|D_H\bar{u}_\eps|^2{\rm d}\xi$.
\end{proof}

The second step consists of Lemma~\ref{lem_sstar} below where we prove an optimal upper bound for the limit functional~$\Fc$ on the space $\Xc$. This will be the keypoint of the proof of the concentration result.
\begin{lemma}\label{lem_sstar}
	Let $\Fc$ be the functional defined in~\eqref{def_fu}. Then, for every $(u,\mu) \in \Xc$, we have
	\begin{equation}\label{eq_soboc}
		\Fc(u,\mu) \leq S^{\ast},
	\end{equation}
	and the equality holds if and only if $(u,\mu)=(0,\delta_{\xi_{\rm o}})$ for some $\xi_{\rm o} \in \Omb$.
\end{lemma}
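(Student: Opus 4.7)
The strategy is to combine the Folland-Stein-Sobolev inequality with a simple super\-additivity inequality for the power $p=2^*/2>1$, together with the total mass constraint $\mu(\Omb)\le 1$.

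First, since $(u,\mu)\in\Xc$ one has $\mu\ge |D_H u|^2\,{\rm d}\xi$, so I can decompose
\[
\mu \;=\; |D_H u|^2\,{\rm d}\xi \;+\; \tilde\mu \;+\; \sum_{j=1}^{\infty}\mu_j\delta_{\xi_j},
\]
with $\tilde\mu\ge 0$ non-atomic and $\{\mu_j\}$ the atomic coefficients. Set $a:=\int_\Omega|D_H u|^2\,{\rm d}\xi$ and $b:=\tilde\mu(\Omb)$; the constraint $\mu(\Omb)\le 1$ reads
\begin{equation}\label{pp_mass}
a+b+\sum_j \mu_j \;\le\; 1.
\end{equation}
By the Folland-Stein-Sobolev inequality~\eqref{folland} applied to the zero-extension of $u$, $\int_\Omega |u|^{2^*}\,{\rm d}\xi\le S^* a^{2^*/2}$, and therefore
\[
\Fc(u,\mu)\;\le\; S^*\Bigl(a^{2^*/2}+\sum_j\mu_j^{2^*/2}\Bigr).
\]

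The second step is to exploit that the map $t\mapsto t^p$ with $p=2^*/2>1$ is super\-additive on $[0,\infty)$; iterating $(x+y)^p\ge x^p+y^p$ I obtain, for non-negative reals whose sum is at most $1$,
\begin{equation}\label{pp_super}
a^{2^*/2}+\sum_j \mu_j^{2^*/2}\;\le\;\Bigl(a+\sum_j \mu_j\Bigr)^{\!2^*/2}\;\le\; 1,
\end{equation}
where the last inequality uses \eqref{pp_mass}. This proves $\Fc(u,\mu)\le S^*$.

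For the equality case I trace the chain of inequalities backwards. Equality in \eqref{pp_mass} and in the second step of \eqref{pp_super} force $b=0$ (hence $\tilde\mu\equiv 0$) and $a+\sum_j\mu_j=1$. Strict super\-additivity of $t\mapsto t^p$ for $p>1$ forces at most one of the non-negative quantities $a,\mu_1,\mu_2,\dots$ to be positive. Finally, equality in the Sobolev inequality on the bounded set $\Omega$ together with $a>0$ would produce an extremal in $\Sc(\h)$ after the trivial extension, which is impossible since the Jerison-Lee extremals~\eqref{talentiane}-\eqref{talentiane_2} are strictly positive on the whole~$\h$ and therefore cannot have compact support; hence $a=0$, which in turn forces $u\equiv 0$ in $\Sc(\Omega)$. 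Consequently exactly one atom remains, say $\mu_{j_0}=1$ with $\xi_{j_0}=:\xi_{\rm o}\in\Omb$, and $(u,\mu)=(0,\delta_{\xi_{\rm o}})$, as claimed. Conversely, a direct computation shows that every such pair saturates \eqref{eq_soboc}.

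The only mildly delicate point is the last one: ruling out the case $a>0$ at equality. The argument above (extend by zero and use non-compact support of the Jerison-Lee bubbles) is clean but relies on~Theorem~\ref{thm_optimal}; an equivalent route is to use that $\Om$ being bounded prevents any minimizing sequence for $S^*$ restricted to $\Sc(\Omega)$ from being tight, which again rules out a maximizer in $\Sc(\Omega)$.
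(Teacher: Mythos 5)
Your proposal is correct and follows essentially the same route as the paper: the Folland--Stein--Sobolev inequality combined with the superadditivity (equivalently, the ``convexity trick'') of $t\mapsto t^{2^*/2}$ and the mass constraint $\mu(\Omb)\le 1$, with the equality case traced back to force $\tilde\mu=0$, a single positive term, and $u=0$ via the impossibility of a compactly supported Jerison--Lee extremal. The only difference is cosmetic: you spell out explicitly why equality in the Sobolev inequality on $\Sc(\Omega)$ is impossible (the extremals of Theorem~\ref{thm_optimal} have full support), a point the paper leaves implicit when it says this ``contradicts Theorem~\ref{thm_optimal}.''
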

\vspace{1mm}
\begin{proof}
	We adapt the argument in the proof of~\cite[Lemma~3.6]{AG03} for the Euclidean case in~$H^1_0(\Om)$, which essentially relies on the well-known ``convexity trick''.
	\vspace{1mm}
	
	For every $(u,\mu)\in \Xc$, the Sobolev inequality yields
	\[
	\Fc(u,\mu)  \equiv  \int_{\Om}|u|^{2^{\ast}}{\rm d}\xi+S^{\ast}\sum_{j=1}^{\infty}\mu_{j}^{\frac{2^{\ast}}{2}} \leq  S^{\ast}\left(\int_{\Om}|D_H u|^{2}{\rm d}\xi\right)^{\!\!\frac{2^{\ast}}{2}}+S^{\ast}\sum_{j=1}^{\infty}\mu_{j}^{\frac{2^{\ast}}{2}}.
	\]
	Now, by the convexity of the function $t\mapsto t^{\frac{2^{\ast}}{2}}\equiv t^{1+\frac{1}{n}}$, for every fixed~$n\geq 1$, it follows
	\begin{eqnarray}\label{numero}
		\Fc(u,\mu)  & \leq & S^{\ast}\left(\int_{\Om}|D_H u|^{2}{\rm d}\xi\right)^{\!\!\frac{2^{\ast}}{2}}+S^{\ast}\sum_{j=1}^{\infty}\mu_{j}^{\frac{2^{\ast}}{2}} \nonumber \\*
		& \leq &  S^{\ast}\left(\int_{\Om}|D_H u|^{2}{\rm d}\xi+\sum_{j=1}^{\infty}\mu_{j}\right)^{\!\!\frac{2^{\ast}}{2}}  \leq \, S^{\ast}\!\left(\mu(\Omb)\right)^{\!\frac{2^\ast}{2}} \, \leq \, S^{\ast}\,,
	\end{eqnarray}
	which proves~\eqref{eq_soboc},
	and the equality clearly holds if $(u,\mu)=(0,\delta_{\xi_{\rm o}}),$ for some $\xi_{\rm o} \in \Omb$. Indeed,
	let us assume that the equality in~\eqref{eq_soboc} holds for some pair~$(u,\mu) \in \Xc$. Then, each inequality in~\eqref{numero} is in fact an equality. This plainly implies that $\tilde{\mu}=0$. If~$u\neq 0$ then by convexity we also deduce that $\mu_j=0$ for every $j$. In turn, this fact yields $\mu=|D_H u|^2{\rm d}\xi$ and $u\in \Sc(\Om)$ is optimal in the Sobolev inequality, which contradicts Theorem~\ref{thm_optimal}. Thus, $u=0$, the equation in~\eqref{numero} and the strict convexity implies that $\mu=\delta_{\xi_{\rm o}}$ for some $\xi_{\rm o}\in \Omb$ as desired.
\end{proof}
\vspace{1mm}

We are finally in the  position to prove the expected concentration result.
\begin{proof}[\bf Proof of Theorem~{\rm\ref{cor_concentration}}]
	As mentioned at the beginning of the present section, by Theorem \ref{the_gamma-intro} and standard  $\gamp$-convergence properties, it follows that every sequence of maximizers of $\Fep$, which is in fact in the form  $\{(\ue, |D_H \ue|^2{\rm d}\xi)\}$ in view of Lemma~\ref{pro_argmax}, must converge (up to subsequences) to a pair $(u,\mu)\in \Xc$ which is a maximum for $\Fc$; that is,
	\[
	\dys (\ue,|D_H \ue|^2{\rm d}\xi)\towt(u,\mu), \ \ \ \text{with} \ \Fc(u,\mu)=\max_{(\bar{u},\bar{\mu}) \in X}\Fc(\bar{u},\bar{\mu}).
	\]
	Thanks to Lemma~\ref{lem_sstar}, we know that $\dys \Fc(u,\mu)\leq S^{\ast}$  for every $(u,\mu)\in \Xc$
	and that the equality is achieved if and only if $(u,\mu)=(0,\delta_{\xi_{\rm o}})$ for some $\xi_{\rm o}\in \Omb$. Hence, it follows that
	$\dys
	(\ue, |D_H \ue|^2{\rm d}\xi)\towt(0,\delta_{\xi_{\rm o}})$, which is the desired concentration property for the energy density.
\end{proof}

\vspace{2mm}
\section{Struwe's Global Compactness  via Profile Decomposition}\label{sec_struwe}
This section is devoted to the proof of Theorem \ref{thm_glob_comp}.
Before going straight into the proof a  remark is needed. Given a bounded domain with smooth boundary~$\Omega \subseteq \h$ we extend all functions of~$C^\infty_0(\Om)$ to the whole~$\h$ putting them equal zero outside~$\Om$. Then we can regard them as functions defined on the whole~$\h$. Now, define the  Folland-Stein Sobolev homogeneous space~$\Sc(\Om)$ as the completion of~$C^\infty_0(\Om)$  with respect to the norm~$\| D_H \cdot \|_{L^2(\h)}$.

Lastly, in the proof of Theorem~\ref{thm_glob_comp} we will make use of a fine asymptotic characterization result proved in~\cite{Ben08}, which presents to the sub-Riemannian setting of the Heisenberg group the Profile Decomposition firstly proven  by G\'erard in~\cite{Ger98}. See also the  recent alternative proof in~\cite{PP14} based upon the results in the very relevant book~\cite{TF07} where an abstract (even more general) approach in Hilbert spaces can be found; see in particular Section~9.9 there for related result in Carnot groups. We refer the reader also to the recent work~\cite{Tin20}.

\begin{theorem}[Theorem~1.1 in \cite{Ben08}]\label{thm_prof}
	Let~$\{u_k\}$ be a bounded sequence in~$ \Sc(\h)$. Then, for any~$j\in\mathbb{N}$ there exist a sequence of numbers~$\{\lambdakj\}\subset(0,\infty)$, a sequence of points~$\{\xikj\}\subset\h$ and a function~$\psi^{(j)}\in \Sc(\h)$ such that as~$k\to\infty$
	\begin{align}\label{strange}
		\begin{cases}
		 \left|\log{\frac{\lambda_k^{(i)}}{\lambdakj}} \right|\to\infty, & \text{if }\lambda_k^{(i)}\neq \lambdakj  \\\\
			\big|\delta_{1/\lambdakj}(\xikj^{-1}\circ \xi_k^{(i)})\big|_{\h}\to\infty &  \text{if }\lambda_k^{(i)}=\lambdakj,
		\end{cases}
	\end{align}
	and such that, for any integer~$\ell\in\mathbb{N}$,
	\begin{align}
		&u_k(\cdot)=\sum_{j=1}^{\ell}\lambdakj^{\frac{-Q}{2^*}}\psi^{(j)}(\delta_{1/\lambdakj}(\tau_{\xikj}^{-1}(\cdot)))+r^{(\ell)}_k(\cdot);\label{prof1}\\
		&\|u_k\|_{\Sc}^2=\sum_{j=1}^{\ell}\|\psi^{(j)}\|_{\Sc}^2+\|r^{(\ell)}_k\|_{\Sc(\h)}^2+o(1)\quad\text{as }k\to\infty;\label{prof2}\\
		&\|r^{(\ell)}_k\|_{L^{2^*}}\to 0\quad\text{as }k\to\infty \label{prof3}.
	\end{align}
\end{theorem}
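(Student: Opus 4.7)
The plan is to carry out the standard iterative extraction scheme for profile decompositions, adapted to the sub-Riemannian structure. The heart of the argument is a \emph{cocompactness} property of the critical embedding $\Sc(\h)\hookrightarrow L^{2^*}(\h)$ with respect to the natural action of the group $G:=(0,\infty)\times\h$ by
\[
[g\cdot u](\eta):=\lambda^{-(Q-2)/2}u\bigl(\delta_{1/\lambda}\bigl(\tau_\xi^{-1}(\eta)\bigr)\bigr),\qquad g=(\lambda,\xi)\in G,
\]
which is an isometry both of $\Sc(\h)$ and of $L^{2^*}(\h)$. The cocompactness lemma I would establish first reads: if $\{u_k\}$ is bounded in $\Sc(\h)$ and $g_k\cdot u_k\rightharpoonup 0$ in $\Sc(\h)$ for every sequence $\{g_k\}\subset G$, then $u_k\to 0$ in $L^{2^*}(\h)$. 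In the Euclidean setting this is obtained via a Littlewood--Paley decomposition and a refined Sobolev inequality of the form $\|u\|_{L^{2^*}}^{2^*}\lesssim \|u\|_{\Sc}^{2^*-\theta}\sup_{g}|\langle g\cdot u,\phi_0\rangle|^{\theta}$; here I would replicate the strategy using the spectral Littlewood--Paley decomposition built from the heat semigroup $e^{t\Delta_H}$ associated to the sub-Laplacian, which is well behaved under the group $G$ and gives the Besov-type calculus on $\h$.

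Given cocompactness, the construction is iterative. Set $r^{(0)}_k:=u_k$. If $r^{(j-1)}_k\to 0$ in $L^{2^*}$, the process stops and we take $\psi^{(i)}=0$ for $i\geq j$. Otherwise, by cocompactness applied to $\{r^{(j-1)}_k\}$, one can select sequences $\{\lambdakj\}\subset(0,\infty)$, $\{\xikj\}\subset\h$ and a nonzero $\psi^{(j)}\in\Sc(\h)$ such that, along a subsequence, $(g^{(j)}_k)^{-1}\cdot r^{(j-1)}_k\rightharpoonup \psi^{(j)}$ in $\Sc(\h)$, with $g^{(j)}_k=(\lambdakj,\xikj)$. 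One then defines
\[
r^{(j)}_k:=r^{(j-1)}_k-g^{(j)}_k\cdot \psi^{(j)}.
\]
Isometry of the action plus weak convergence yield the one-step Pythagorean identity $\|r^{(j)}_k\|_{\Sc}^2=\|r^{(j-1)}_k\|_{\Sc}^2-\|\psi^{(j)}\|_{\Sc}^2+o(1)$, which iterates to \eqref{prof2}; unravelling the recursive formula gives the representation \eqref{prof1}.

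The orthogonality of scales \eqref{strange} would be proven by contradiction: suppose for some $i<j$ that $\lambda_k^{(i)}/\lambdakj$ stays in a compact subset of $(0,\infty)$ \emph{and} $|\delta_{1/\lambdakj}(\xikj^{-1}\circ\xi_k^{(i)})|_\h$ stays bounded. Then, passing to a further subsequence, the group elements $(g^{(j)}_k)^{-1}g^{(i)}_k$ converge to some $g_\infty\in G$, so conjugation by them is a continuous operation, and the weak limit of $(g^{(j)}_k)^{-1}\cdot r^{(j-1)}_k$ can be computed by first pulling back through $g^{(i)}_k$; this forces $\psi^{(j)}$ to coincide with $g_\infty^{-1}\cdot \psi^{(i)}$ \emph{minus} what was already subtracted at step $i$, which at step $i$ was exactly $\psi^{(i)}$ in the shifted frame — so $\psi^{(j)}=0$, contradicting nontriviality. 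Finally, termination/closure: iterating Pythagoras gives $\sum_{j=1}^{\ell}\|\psi^{(j)}\|_{\Sc}^2\leq \liminf_k\|u_k\|_{\Sc}^2$, so $\|\psi^{(j)}\|_{\Sc}\to 0$, and the quantitative refined Sobolev inequality used in the cocompactness step then forces $\limsup_k\|r^{(\ell)}_k\|_{L^{2^*}}\to 0$ as $\ell\to\infty$, establishing \eqref{prof3}.

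The main obstacle is the cocompactness lemma together with its refined Sobolev companion: transplanting the Euclidean Littlewood--Paley/wavelet technology to $\h$ requires careful handling of the non-commuting dilations and left-translations, and one must ensure that the frequency cut-offs built from $e^{t\Delta_H}$ interact correctly with the isometric action of $G$. The orthogonality step is also delicate because the two alternatives in \eqref{strange} must be handled simultaneously; a diagonal extraction across all pairs $(i,j)$ with $i<j$ is needed.
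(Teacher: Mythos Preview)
This theorem is not proved in the paper at all: it is quoted verbatim as Theorem~1.1 of \cite{Ben08} and used as a black box in the proof of Theorem~\ref{thm_glob_comp}. The paper explicitly says it ``will make use of a fine asymptotic characterization result proved in~\cite{Ben08}'' and also points to the abstract Hilbert-space treatment in~\cite{TF07}. So there is no ``paper's own proof'' to compare your proposal against.

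That said, your outline is a faithful sketch of the standard profile-decomposition machinery and is essentially the route taken in~\cite{Ben08} (following G\'erard~\cite{Ger98}) and, in a more abstract form, in~\cite{TF07}. The key ingredients you identify --- isometric action of the rescaling group on both $\Sc(\h)$ and $L^{2^*}(\h)$, a cocompactness/refined-Sobolev lemma built from a Littlewood--Paley decomposition adapted to $\Delta_H$, iterative weak-limit extraction with the one-step Pythagorean identity, and the orthogonality-of-scales contradiction --- are exactly the right ones. Your orthogonality argument is slightly garbled in its last sentence: the clean version is that, by construction, $(g_k^{(i)})^{-1}\cdot r_k^{(i)}\rightharpoonup 0$, and if $(g_k^{(j)})^{-1}g_k^{(i)}\to g_\infty$ along a subsequence then $(g_k^{(j)})^{-1}\cdot r_k^{(j-1)}$ has the same weak limit as $g_\infty\cdot\bigl((g_k^{(i)})^{-1}\cdot r_k^{(j-1)}\bigr)$; since $r_k^{(j-1)}$ differs from $r_k^{(i)}$ by profiles already shown to be orthogonal to the $i$-th scale, this weak limit is $0$, contradicting $\psi^{(j)}\neq 0$.

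One genuine point to flag: the conclusion you derive for the remainder is $\limsup_k\|r_k^{(\ell)}\|_{L^{2^*}}\to 0$ as $\ell\to\infty$, which is the standard G\'erard-type statement. The version displayed in \eqref{prof3} reads literally as $\|r_k^{(\ell)}\|_{L^{2^*}}\to 0$ as $k\to\infty$ for each fixed $\ell$, which is not what the iterative scheme produces in general; the intended meaning (and the one actually used later in the paper, in Step~7 of the proof of Theorem~\ref{thm_glob_comp}) is the $\ell\to\infty$ version. Your formulation is the correct one.
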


\begin{proof}[\bf Proof of~{Theorem~{\ref{thm_glob_comp}}}]
We divide the proof into several steps.

\vspace{2mm}
\noindent{\bf Step 1. The sequence~$\{u_k\}$ is bounded in~$\Sc(\Om)$.}  
Since~$\{u_k\}$ is a Palais-Smale sequence for~$\El$, we have
\begin{eqnarray*}
\left(\frac12-\frac{1}{2^*}\right)\int_{\Om}|u_k|^{2^*}	\,{\rm d}\xi & =& \El(u_k)-\frac12 \langle {\rm d}\El(u_k),u_k\rangle\\*[1ex]
 &\leq& c +c|\langle{\rm d}\El(u_k), u_k\rangle| \\*[1.2ex]
 &\leq& c +c\eps\|u_k\|_{S^1_0}^2 + c_{\eps}o(1)\,. 
\end{eqnarray*}
Now, since~$\Om$ is bounded, we have as~$k\to\infty$
\begin{eqnarray*}\label{thm_glob_comp_1}
  \int_{\Om}|u_k|^{2}	\,{\rm d}\xi &\leq& |\Om|^{2/Q}\left(\int_{\Om}|u_k|^{2^{*}}\,{\rm d}\xi\right)^{2/2^{*}}\quad\text{as }k\to\infty\nonumber \\*[1ex] 
  &\leq& c+c_{\eps}o(1)+c\eps\|u_k\|_{S^1_0}^2,
\end{eqnarray*}
where we also used H\"older's Inequality and Young's Inequality.

\vspace{2mm} 
Hence, by combining the estimates above, we have that
\begin{eqnarray*}
\|u_k\|_{\Sc}^2&=&2\El(u_k)+\lambda \int_{\Om}|u_k|^{2}	\,{\rm d}\xi+\frac{2}{2^*}\int_{\Om}|u_k|^{2^*}	\,{\rm d}\xi\\*[1ex] 
&\leq& c +c\eps\|u_k\|_{S^1_0}^2 + c_{\eps}o(1) \quad\text{as }k\to\infty\,.
\end{eqnarray*}
So that, up to choosing~$\eps>0$ sufficiently small and reabsorbing the $\Sc$-norm of $u_k$ in the left hand-side, we have
\[
\|u_k\|_{\Sc}^2 \leq c + o(1) \quad\text{as }k\to\infty\,,
\]
which proves the claim.

\vspace{2mm}
\noindent{\bf Step 2. The weak limit~$\uz$ solves~\eqref{plambda}.} From Step 1., up to a subsequences not relabelled,  there exists
$\uz \in \Sc(\Omega)$ such that~$u_k\rightharpoonup\uz$ in~$\Sc(\Omega)$ as~$k\to\infty$. Hence, by the compact embedding theorem (see Theorem 3.2 of~\cite{GL92}), as ~$k\to\infty$
\begin{equation}\label{thm_glob_comp_conv}
\begin{aligned}
&u_k\rightharpoonup\uz\quad \quad\text{ weakly in } L^{2^*}(\Om),\\
&|u_k|^{2^*-2}u_k\to|\uz|^{2^*-2}\uz \quad \text{ strongly in }L^1(\Om),\\
&|u_k|^{2^*-2}u_k \rightharpoonup|\uz|^{2^*-2}\uz \quad\text{ weakly in } L^{(2^*)'}(\Om),\\
&u_k\to\uz\quad \text{ strongly in } L^2_{\text{\scriptsize{\rm loc}}}(\h).
\end{aligned}
\end{equation}
Thus, the weak continuity of the functional~${\rm d}\El$ yields that~$\uz$ is a solution to~\eqref{plambda}. If the convergence of $u_k$ to $\uz$ is strong we are done.

\vspace{2mm}
\noindent{\bf Step 3. If the convergence is not strong, then~$\{u_k\}$ contains further profiles.} Assume that~$u_k$ does not converge strongly to~$\uz$ in~$\Sc(\Omega)$. Then, we can apply Theorem~\ref{thm_prof} to the sequence~$\{u_k\}$, obtaining a profile decomposition for a renumbered
subsequence, that is for any integer~$\ell\in\mathbb{N}$ we can write
\begin{align*}
&u_k(\cdot)=\sum_{j=1}^{\ell}\lambdakj^{\frac{-Q}{2^*}}\psi^{(j)}(\delta_{1/\lambdakj}(\tau_{\xikj}^{-1}(\cdot)))+r^{(\ell)}_k(\cdot).
\end{align*}

Let~$\Irm=\{j\in\mathbb{N}\,|\, \psi^{(j)}\neq \uz\}$ and set~$\uj=\psi^{(j)}$ for~$j\in\Irm$.
Note that
\begin{itemize}
  \item By the definition of profiles, for any~$j\in\Irm$ there exist a sequence of numbers~$\{\lambdakj\}\subset(0,\infty)$, a sequence of points~$\{\xikj\}\subset\h$ such that
      \begin{equation}\label{conv_prof}
      u_k(\tau_{\xikj}(\delta_{\lambdakj}(\cdot)))\rightharpoonup\uj(\cdot)\quad\text{in }\Sc(\h).
      \end{equation}
  \item The limit function~$\uz$ can be regarded as the profile associated to the trivial dilations and translations.
\end{itemize}
Hence, if we show that ~$\Irm\neq \emptyset$, we are done proving that~$\{u_k\}$ contains further profiles.
Therefore, assume by contradiction that~$\Irm=\emptyset$ and so~$\{u_k\}$ does not contain profiles different from~$\uz$. Then, the sum in~\eqref{prof1} reduces to the single term~$\uz$ and~\eqref{prof1} gives
\[
u_k-\uz= \uz +r_k^{(0)} -\uz= r_k^{(0)},
\]
which in turn implies by~\eqref{prof3}
\[
\|u_k-\uz\|_{2^*}= \|r_k^{(0)}\|_{2^*}\to 0\quad\text{as }k\to\infty.
\]
Hence,~$u_k\to\uz$ strongly in~$L^{2^*}(\Om)$. On the other hand, as~$k\to\infty$
\begin{align*}
\|u_k-\uz\|^{2}_{\Sc(\Omega)}&=  ( u_k,u_k-\uz) - ( \uz,u_k-\uz) \\
&=\langle {\rm d}\El (u_k),u_k-\uz\rangle - \langle {\rm d}\El(\uz) ,u_k-\uz\rangle +\lambda \|u_k-\uz\|_{2}^2\\
&\quad+ \int_{\Om}\left(|u_k|^{2^*-2}u_k- |\uz|^{2^*-2}\uz\right)\left(u_k-\uz\right)	\,{\rm d}\xi=o(1),
\end{align*}
by~\eqref{PS2} and the convergence properties of $u_k$ in \eqref{thm_glob_comp_conv}. This gives the desired contradiction and so~$\Irm\neq \emptyset$; i.~\!e.,~$\{u_k\}$ contains further profiles.

\vspace{2mm}
\noindent{\bf Step 4. The scaling parameters~$\{\lambdakj\}$ from Theorem~\ref{thm_prof} satisfy~$\lambdakj\to 0$ and ${\rm dist}(\xikj,\Om)={\rm O}(\lambdakj)$ as~$k\to\infty$ for any~$j\in\mathbb{N}$.}
Arguing again by contradiction we first assume that, up to a subsequences,~$\lambdakj\to \infty$ as~$k\to\infty$.
Note that the convergence in~\eqref{conv_prof} is actually strong in~$L^2_{\text{\scriptsize{\rm loc}}}(\h)$ by Rellich's Theorem. Then, since~$\uj\neq 0$ we get by Fatou's lemma
\begin{align*}
0<\int_{\h}|\uj|^2	\,{\rm d}\xi &\leq \liminf_{k\to\infty}\lambdakj^{ Q-2 } \int_{\h} \left|u_k(\tau_{\xikj}(\delta_{\lambdakj}(\xi)))\right|^2	\,{\rm d}\xi\\
&=
\liminf_{k\to\infty}\lambdakj^{-2 } \int_{\Om}  |u_k |^2	\,{\rm d}\xi=0,
\end{align*}
which together with the fact that~$\lambdakj\to \infty$ and~\eqref{thm_glob_comp_1} gives a contradiction.

Therefore, there exists~$c^{(j)}>0$ such that~$\lambdakj\leq c^{(j)}$ for all~$k$. Now assume that, up to a subsequences,
$\lambdakj \geq c_j > 0$ for each~$k$. Then, since the profiles $\uj$ and $\uz$, for $j \in \Irm$, must be attained along different sequence of parameters and points satisfying conditions \eqref{strange}, again up to subsequences,~$\snr{\xikj}_{\h}\to\infty$.

Hence,~$ u_k(\tau_{\xikj}(\delta_{\lambdakj}(\cdot)))\rightharpoonup0$, since for any fixed~$k$, each function
is supported in~$\delta_{1/\lambdakj} (\tau_{\xikj^{-1}}(\Om))$, and so for~$k$ sufficiently large, they are zero on each compact set in~$\h$, $\Om$ being bounded and~$\lambdakj \geq c_j > 0$.
Since~$\uj\neq0$ we obtain a contradiction and therefore~$\lambdakj \to 0$ as~$k\to\infty$.

Let us now deal with the second part of the statement. Assume that
\[
\frac{{\rm dist}(\xikj,\Om)}{\lambdakj}\to\infty \qquad \text{as}~k\to\infty.
\]
This yields, possibly along  subsequences,~${\rm dist}({0},\delta_{1/\lambdakj} (\tau_{\xikj^{-1}}(\Om))) \to\infty$ {(where we denoted with~${0}$ the identity element in~$\h$)} and still, by arguing as before, we obtain that~$u_k(\tau_{\xikj}(\delta_{\lambdakj}(\cdot)))\rightharpoonup0$.
This is impossible and so~${\rm dist}(\xikj,\Om)={\rm O}(\lambdakj)$ as~$k\to\infty$.

\vspace{2mm}
\noindent{\bf Step 5. The profiles~$\uj$ solve~\eqref{pzero} either in a half-space or in the whole~$\h$, and we may assume~$\{\xikj\}\subset\Om$.}
According to {Step~4}, we shall distinguish two cases:

\noindent{\em Case I:} We have~${\rm dist}(\xikj,\partial\Om)={\rm O}(\lambdakj)$ as~$k\to\infty$.

\noindent{\em Case II:} We have~$\dfrac{{\rm dist}(\xikj,\partial\Om)}{\lambdakj}\to\infty$ as~$k\to\infty$ (This only happens when
$\xikj \subset\Om$ stays inside the domain or approaches the boundary  slower than~$\lambdakj$).

\vs
We know by~\eqref{conv_prof} that~$\ukj(\cdot):=u_k(\tau_{\xikj}(\delta_{\lambdakj}(\cdot)))\rightharpoonup\uj(\cdot)$ in~$\Sc(\h)$,
and that~${\rm supp} \, \ukj\,\subset \delta_{1/\lambdakj} (\tau_{\xikj^{-1}}(\Om)))$.
  Note that, because of the condition on the scaling parameters,
  \[\delta_{1/\lambdakj} (\tau_{\xikj^{-1}}(\Om)) \to \Om_{\rm o}^{(j)} \ \text{as} \
k\to\infty,
\]
where~$\Om_{\rm o}^{(j)}$ is either an open half-space or the entire space (in Case II).

Fix~$\varphi\in C_0^\infty(\Om_{\rm o}^{(j)})$. By the invariance
of the~$\Sc$ and the~$L^{2^*}$ norms with respect to the scaling
\begin{equation}\label{scal_inv}
	u(\cdot) \mapsto \widetilde{u}(\cdot)=\eta^{\frac{2-Q}{2}}u(\delta_{1/\eta}(\tau_{(\xi)^{-1}}(\cdot))),
\end{equation}
 it is clear that the rescaled function~$\widetilde{\varphi}(\cdot):=\lambdakj^{\frac{2-Q}{2}}\varphi(\delta_{1/\lambdakj}(\tau_{\xikj}^{-1}(\cdot)))\in C_0^\infty(\Om)$ and
\begin{equation}\label{thm_glob_comp_2}
\langle {\rm d}\El(u_k),\widetilde{\varphi}\rangle = \langle {\rm d}\Es(\ukj),\varphi\rangle-\lambda (\lambdakj)^{1+Q/2}\int_{\h}\ukj\varphi 	\,{\rm d}\xi.
\end{equation}
Now, by~\eqref{PS2},
\begin{equation*}
\langle {\rm d}\El(u_k),\widetilde{\varphi}\rangle=\lambdakj^{\frac{2-Q}{2}} \langle {\rm d}\El(u_k),\varphi\big(\delta_{1/\lambdakj}(\tau_{\xikj}^{-1}(\cdot))\big)\rangle\to 0 \quad\text{as }k\to\infty.
\end{equation*}
On the other hand, since~$\varphi$ is compactly supported,~$\ukj\to\uj$ strongly in ~$L^{2}_{\rm{loc}}(\h)$ and~$\lambdakj\to0$ by {Step 4} we get
\begin{equation}\label{thm_glob_comp_4}
(\lambdakj)^{1+Q/2}\int_{\h}\ukj\varphi 	\,{\rm d}\xi \to 0 \quad\text{as }k\to\infty.
\end{equation}
Combining~\eqref{thm_glob_comp_2}--\eqref{thm_glob_comp_4} with the weak continuity of~${\rm d}\Es$ we get~$\langle {\rm d}\Es(\uj),\varphi\rangle= 0$ for all~$\varphi\in C_0^\infty(\Om_{\rm o}^{(j)})$. Hence, the claim follows by density.

In remains to show that ~$\{\xikj\}\subset\Om$. As already observed, this is obvious in Case II, and so we only need to prove it in Case I, that is when~${\rm dist}(\xikj,\partial\Om)={\rm O}(\lambdakj)$ as~$k\to\infty$. In that case, fix~$\bar{\xi}^{(j)}\in \Om_{\rm o}^{(j)}$ and set
\begin{align*}
&\bar{\xi}_k^{(j)}:=\tau_{\xikj}(\delta_{\lambdakj}( \bar{\xi}^{(j)})),\\
&\bar{u}_k^{(j)}(\cdot)=u_k(\tau_{\bar{\xi}_k^{(j)}}(\delta_{\lambdakj}( \cdot))=u_k(\tau_{\xikj}(\delta_{\lambdakj}( \tau_{\bar{\xi}^{(j)}}(\cdot)))).
\end{align*}
Then,~$\{\bar{\xi}_k^{(j)}\}\subset\Om$ for~$k$ sufficiently large and~$\bar{u}_k^{(j)}(\cdot)\rightharpoonup\uj(\tau_{\bar{\xi}^{(j)}}(\cdot))$ in~$\Sc(\h)$ as~$k\to\infty$. Therefore, the claim follows by taking~$\bar{\xi}_k^{(j)}$ as translation parameter and~$\bar{u}^{(j)}(\cdot)=\uj(\tau_{\bar{\xi}^{(j)}}(\cdot))$.

\vspace{2mm}
\noindent{\bf Step 6. The profiles~$\uj$ are in finite number.} By~\eqref{prof2} of Theorem~\ref{thm_prof}, it suffices to show that the~$
\Sc$ norms of the nontrivial profiles~$\uj$,~$j\in\mathbb{N}$, are uniformly bounded from below. To this aim, let us test~\eqref{pzero} with~$\varphi=\uj$. Thus, from the Folland-Stein inequality~\eqref{folland}, we get
\[
\|\uj\|_{\Sc}^{2}\,=\,\|\uj\|_{L^{2^*}}^{2^*}
\,\leq \, S^* \big(\|\uj\|_{\Sc}^{2}\big)^{\frac{2^*}{2}}.\]
Consequently,~$\|\uj\|_{\Sc}\geq (S^*)^{\frac{2}{2-2^*}}$ and so the claim is proved.

\vspace{2mm}
\noindent{\bf Step 7. The sequence of remainders~$\{\rkj\}$ strongly converges to 0 in~$\Sc(\h)$.}
First, by the scaling invariance we can easily see that
\begin{eqnarray}\label{thm_glob_comp_5}
\|\rkj\|_{\Sc}^2
&=&\|u_k-\uz\|_{\Sc}^2+\sum_{j=1}^{J}\left\|\lambdakj^{\frac{2-Q}{2}}\uj(\delta_{1/\lambdakj}(\tau_{\xikj}^{-1}(\cdot)))\right\|_{\Sc}^2
\notag\\*[0.5ex]
&&
-2\left(u_k-\uz, \sum_{j=1}^{J} \lambdakj^{\frac{2-Q}{2}}\uj(\delta_{1/\lambdakj}(\tau_{\xikj}^{-1}(\cdot)))\right) +o(1)\notag\\*[0.5ex]
&=&\|u_k-\uz\|_{\Sc}^2+\sum_{j=1}^{J}\|\uj\|_{\Sc}^2-2\sum_{j=1}^{J}(\ukj,\uj)\notag\\*[0.5ex]
&& +2\sum_{j=1}^{J}\left( \uz, \lambdakj^{\frac{2-Q}{2}}\uj(\delta_{1/\lambdakj}(\tau_{\xikj}^{-1}(\cdot)))\right)+o(1)\notag\\*[0.5ex]
&=&\|u_k-\uz\|_{\Sc}^2-\sum_{j=1}^{J}\int_{\h} |{u}^{(j)}|^{2^*}	\,{\rm d}\xi +o(1),
\end{eqnarray}
where we used the fact that~$\uj$ solves~\eqref{pzero} by Step 5 for any~$j\in\Irm$.
Now, arguing as in {Step 3}, as~$k\to\infty$
\begin{equation*}
\|u_k-\uz\|^{2}_{\Sc(\Omega)}=  \int_{\Om}\left(|u_k|^{2^*-2}u_k- |\uz|^{2^*-2}\uz\right)\left(u_k-\uz\right)	\,{\rm d}\xi=o(1),
\end{equation*} 
{and, 
using again~\eqref{thm_glob_comp_conv}, the previous equality becomes, as~$k\to\infty$,}
\begin{equation*}
\|u_k-\uz\|^2_{\Sc(\Omega)}= \|u_k\|_{L^{2^*}(\Om)}^{2^*}- \|\uz\|_{L^{2^*}(\Om)}^{2^*}+o(1),
\end{equation*}
which yields in turn, thanks to~\eqref{thm_glob_comp_5}
\begin{equation}\label{thm_glob_comp_6}
\lim_{k\to\infty}\|\rkj\|_{\Sc}^2=\lim_{k\to\infty}\|u_k\|_{L^{2^*}(\Om)}^{2^*}- \|\uz\|_{L^{2^*}(\Om)}^{2^*}
-\sum_{j=1}^{J}\| {u}^{(j)} \|_{L^{2^*}(\h)}^{2^*}.
\end{equation}
Hence, we are done if we show that
\begin{equation}\label{eq:critical-limit-norm}
\lim_{k\to\infty}\|u_k\|_{L^{2^*}(\Om)}^{2^*}= \|\uz\|_{L^{2^*}(\Om)}^{2^*}
+\sum_{j=1}^{J}\| {u}^{(j)} \|_{L^{2^*}(\h)}^{2^*}.
\end{equation}
By~\eqref{prof1}, we know that~$u_k(\cdot)=\sum_{j=1}^{\ell}\lambdakj^{\frac{-Q}{2^*}}\psi^{(j)}(\delta_{1/\lambdakj}(\tau_{\xikj}^{-1}(\cdot)))+\rkj(\cdot)$. Thus, by using
iteratively the Brezis-Lieb lemma and the
invariance of the~$L^{2^*}$ norm with respect to~\eqref{scal_inv} together with~\eqref{prof3}, we obtain the desired conclusion.

\vspace{2mm}
\noindent{\bf Step 8. Completion of the proof.}
Clearly,~\eqref{propr2} follows straightly from~\eqref{strange}, while and~\eqref{propr1} and~\eqref{propr3} come from~\eqref{prof2}, thanks to Step 7. Hence, it
remains to show~\eqref{propr4}. To this end, let us recall that by~\eqref{thm_glob_comp_conv},~$u_k \to \uz$ in~$L^2(\Om)$, where and~$\uz$ solves~\eqref{plambda} thanks to Step 2. Moreover, because of Step 5, we know that
$\|\uj\|_{\Sc}^{2}=\|\uj\|_{L^{2^*}}^{2^*}$ . Combining these two facts with~\eqref{propr3},~\eqref{eq:critical-limit-norm} and
the definition of~$\El$, we obtain the validity of ~\eqref{propr4}. This completes the proof.
\end{proof}
\vspace{2mm}

It is now worth remarking that in the special case when further regularity is assumed on the set $\Om$ we can give a complete characterization of the limiting sets $\{\Om_{\rm o}^{(j)}\}_{j \in \Irm}$. Let us recall the definition of $H$-flatness of $\Om$ at a boundary point.

\begin{defn}[{\bf $H$-flat domains}]\label{H_flat}
    Let $\Om$ be a smooth bounded domain of~$\h$ and let $\xi \in \partial \Om$. Assume $\varPhi \in C^\infty(\h)$ be a defining function for the boundary of $\Om$ in a neighborhood of $\xi$ \big(that is, $\varPhi : B_\rr(\xi) \to \r$ with $\varPhi \equiv 0$ on $B_\rr(\xi) \cap \partial \Om$ and $\varPhi >0$ on $B_\rr(\xi) \cap \Om$\big). {The point~$\xi$ is called characteristic if~$D_H \varPhi(\xi)=0$}. Moreover, if $\xi$ is a characteristic point, then we say that $\Om$ is $H$\textup{-flat} at $\xi$ if
    \[
    q_H\varPhi (\xi)=0,
    \]
    where $q_H: \r^{2n} \to \r$ is the quadratic form associated with the Hessian matrix $D^2_H \varPhi$, that is
    \[
    \big(q_H\varPhi(\eta) \big)(z):= \sum_{i,j=1}^{2n} \big((D_H)_i(D_H)_j\varPhi\big)(\eta) z_iz_j \qquad \forall z \in \r^{2n}.
    \]
    We say that $\Om$ is $H$-flat if it is $H$-flat at any characteristic point of its boundary.
\end{defn}

We have the following
  \begin{lemma}[Lemma 3.4 in \cite{CU01}]\label{citti_uguzzoni}
      Let $\Om$ be a smooth bounded domain in $\h$ and, for any sequence $\{\xi_k\}$ in $\Om$ and for any divergent series $\{\lambda_k\} \in \r^+$, let
      \[
      \Om_k := \delta_{\lambda_k}(\tau_{\xi_k^{-1}}(\Omega)).
      \]
      Then, up to subsequences, $\Om_k \to \Om_{\rm o}$ where
      \begin{enumerate}[\rm(i)]
          \item if $\lambda_k {\rm dist}(\xi_k,\partial \Om)$ is unbounded then $\Om_{\rm o}$ is the whole space $\h$;
          \item if $\lambda_k {\rm dist}(\xi_k,\partial \Om)$ is bounded, then there exists $\bm{a}$,~$\bm{b} \in \r^n$ and~$\bm{c} \in \r$ and a quadratic form $q$ on $\r^{2n}$ such that, up to translations,
          \[
          \Om_{\rm o} = \big\{(x,y,t) \in \h \, | \, \bm{a} \cdot x + \bm{b} \cdot y +  \bm{c} \cdot t + q(x,y) >0\big\};
          \]
          \item if $\lambda_k {\rm dist}(\xi_k,\partial \Om)$ is bounded and $\Om$ is $H$-flat, according to {\rm Definition~{\rm \ref{H_flat}}}, then $\Om_{\rm o}$ is a half-space of $\h$.
      \end{enumerate}
  \end{lemma}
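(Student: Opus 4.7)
The strategy is to study the local structure of $\partial\Omega$ at the point the sequence $\{\xi_k\}$ approaches (after passing to a subsequence), and then read off the shape of the blow-up limit from a Taylor expansion of a defining function in adapted Heisenberg coordinates. First I would reduce to a local problem: if $\lambda_k{\rm dist}(\xi_k,\partial\Omega)\to\infty$ then $\Omega_k=\delta_{\lambda_k}(\tau_{\xi_k^{-1}}(\Omega))$ contains the Kor\'anyi ball $B_{\lambda_k{\rm dist}(\xi_k,\partial\Omega)}(0)$ (because left translation is an isometry for the Kor\'anyi gauge and $\delta_{\lambda_k}$ acts homogeneously of degree $1$), so $\Omega_k\to\h$ in the sense of local uniform convergence of characteristic functions (or of defining functions), giving~(i). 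If instead $\lambda_k{\rm dist}(\xi_k,\partial\Omega)$ is bounded, after a subsequence we may fix a limit point $\xi_{\infty}\in\partial\Omega$ and an asymptotic boundary point $\eta_k\in\partial\Omega$ with $\xi_k^{-1}\circ\eta_k=\delta_{1/\lambda_k}(v_k)$ for some bounded $\{v_k\}$; we then work in a neighbourhood of $\xi_\infty$ where a smooth defining function $\varPhi$ is available.

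In case~(ii), I would Taylor expand $\varPhi$ in a system of exponential coordinates centered at $\xi_k$. Writing $\eta=\delta_{1/\lambda_k}(\zeta)$ with $\zeta=(z',t')\in\mathbb{R}^{2n}\times\mathbb{R}$, the rescaled defining function
\begin{equation*}
\varPhi_k(\zeta):=\lambda_k\,\varPhi\bigl(\tau_{\xi_k}(\delta_{1/\lambda_k}(\zeta))\bigr)
\end{equation*}
has the expansion, up to an additive constant converging to some $\bm{c}\in\mathbb{R}$,
\begin{equation*}
\varPhi_k(\zeta)=D_H\varPhi(\xi_k)\cdot z'+\lambda_k^{-1}T\varPhi(\xi_k)\,t'+\tfrac{1}{2}q_H\varPhi(\xi_k)(z')+\mathrm{o}(1),
\end{equation*}
where the error collects terms of order $\lambda_k^{-1}$ or higher in view of the homogeneity $\delta_\lambda(x,y,t)=(\lambda x,\lambda y,\lambda^2 t)$ and the smoothness of $\varPhi$. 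Along the extracted subsequence the coefficients converge: $D_H\varPhi(\xi_k)\to(\bm{a},\bm{b})$, $\lambda_k^{-1}T\varPhi(\xi_k)\to\bm{c}$ (finite because either $\xi_\infty$ is a characteristic point or, when it is non-characteristic, $\lambda_k{\rm dist}(\xi_k,\partial\Omega)$ bounded forces the $T$-contribution to rescale at rate $\lambda_k^{-1}$), and $q_H\varPhi(\xi_k)\to q$. Hence $\{\varPhi_k>0\}$ converges to the set $\{(x,y,t):\bm{a}\cdot x+\bm{b}\cdot y+\bm{c}\cdot t+q(x,y)>0\}$, which after translating by $v_k\to v_\infty$ is exactly the claimed $\Omega_{\rm o}$.

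For case~(iii), under $H$-flatness I would distinguish two subcases at the limit boundary point $\xi_{\infty}$. If $\xi_\infty$ is non-characteristic then $D_H\varPhi(\xi_\infty)\neq 0$ and, after rescaling, the linear horizontal part $(\bm{a},\bm{b})\cdot z'$ dominates, while both the $t'$-term and the quadratic term get absorbed into the error after dividing by the modulus of the horizontal gradient: the limit set is the half-space $\{(\bm{a},\bm{b})\cdot z'>\mathrm{const}\}$. If $\xi_\infty$ is characteristic, $H$-flatness forces $q_H\varPhi(\xi_\infty)=0$, and also $D_H\varPhi(\xi_\infty)=0$ by definition; the only surviving term in the above expansion is the linear $t'$-term, yielding a half-space $\{\bm{c}\,t'>\mathrm{const}\}$. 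In either subcase $\Omega_{\rm o}$ is a half-space in the sense of the paper.

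The main technical obstacle I expect is the careful bookkeeping of the anisotropic Heisenberg scaling in case~(ii): one has to check that the $t$-derivative contribution really scales like $\lambda_k^{-1}$ and therefore persists (with a finite limit $\bm{c}$) exactly under the hypothesis $\lambda_k{\rm dist}(\xi_k,\partial\Omega)=O(1)$, while higher order terms vanish; the $H$-flatness case then becomes a clean corollary of this expansion. Since this is a known result from \cite{CU01}, the plan is essentially to reproduce their blow-up computation in the Kor\'anyi gauge and invoke their classification verbatim.
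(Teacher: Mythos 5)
The paper itself gives no proof of this lemma: it is imported verbatim as Lemma~3.4 of \cite{CU01}, so there is no in-paper argument to compare against, and your plan of ``reproducing the blow-up computation'' is the only reasonable one. Judged on its own terms, your sketch has the right overall shape (blow-up of a defining function in the anisotropic gauge, plus a dichotomy at characteristic points), and part~(i) is fine, but the central Taylor expansion in case~(ii) is mis-normalized in a way that breaks the argument. With $\varPhi_k(\zeta)=\lambda_k\,\varPhi\bigl(\tau_{\xi_k}(\delta_{1/\lambda_k}(\zeta))\bigr)$ and $\delta_{1/\lambda_k}(z',t')=(z'/\lambda_k,\,t'/\lambda_k^2)$, the horizontal Hessian term has homogeneous degree two, exactly like the $T$-term, so the correct expansion is
\[
\varPhi_k(\zeta)=\lambda_k\varPhi(\xi_k)+D_H\varPhi(\xi_k)\cdot z'+\lambda_k^{-1}\Bigl(T\varPhi(\xi_k)\,t'+\tfrac12 q_H\varPhi(\xi_k)(z')\Bigr)+{\rm O}(\lambda_k^{-2}),
\]
not the formula you wrote, where the quadratic term appears with coefficient $1$. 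Moreover $T\varPhi(\xi_k)$ is bounded, so $\lambda_k^{-1}T\varPhi(\xi_k)\to 0$: it cannot converge to a nonzero $\bm{c}$, and under this normalization the quadratic term dies as well. As written, your limit set is always a vertical half-space $\{(\bm{a},\bm{b})\cdot z'>{\rm const}\}$, and the paraboloid-type sets of item~(ii) can never appear.

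The fix is that the characteristic/non-characteristic dichotomy must be made \emph{before} choosing the normalization, already in case~(ii). If the limit point $\xi_\infty\in\partial\Om$ is non-characteristic, rescaling by $\lambda_k$ is correct and yields a vertical half-space. If $\xi_\infty$ is characteristic, one must rescale by $\lambda_k^{2}$; the surviving terms are then $\lambda_k D_H\varPhi(\xi_k)\cdot z'$ (bounded only because $D_H\varPhi(\xi_\infty)=0$ and $\xi_k\to\xi_\infty$ at gauge rate $1/\lambda_k$ --- a point that needs checking), $T\varPhi(\xi_\infty)\,t'$ (nonzero, since the full gradient of a defining function cannot vanish where $D_H\varPhi=0$), and $\tfrac12 q_H\varPhi(\xi_\infty)(z')$; this is where $q$ enters and where $H$-flatness kills it in case~(iii). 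Your case-(iii) discussion implicitly performs this renormalization, but the case-(ii) expansion you present as the core computation does not, and the two are inconsistent. You should also specify in what sense $\Om_k\to\Om_{\rm o}$ and why locally uniform convergence of $\varPhi_k$ together with non-degeneracy of the limit implies convergence of the superlevel sets. None of this affects the paper, which simply cites \cite{CU01}.
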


\vspace{2mm}

The main consequence of Lemma \ref{citti_uguzzoni} is that for bounded $C^\infty$ domains in $\h$ we explicitly know the limiting sets $\Om_{\rm o}^{(j)}$. Moreover, if we consider in Theorem~{\rm\ref{thm_glob_comp}} only positive Palais-Smale sequences for $\El$ on $H$-flat domains, then we have that the limiting spaces~$\Om_{\rm o}^{(j)}$ are always the whole space $\h$. This follows from the nonexistence results proven in \cite{LU98}.

\vspace{2mm}

 \vspace{3mm}

\end{document}